\theoremstyle{plain}
\newtheorem{thm}{Theorem}[section]
\newtheorem{prop}[thm]{Proposition}
\newtheorem{lem}[thm]{Lemma}		
\newtheorem{cor}[thm]{Corollary}
\theoremstyle{definition}
\newtheorem{ex}[thm]{Example}	
\newtheorem{rem}[thm]{Remark}
\theoremstyle{remark}
\title{Subgroups and diversity of left-orderable small cancellation groups}
\author{Markus Steenbock}
\address{Fakult\"at f\"ur Mathematik, Universit\"at Wien,  1090 Wien, Austria}
\email{markus.steenbock@univie.ac.at}
\date{\today}
\subjclass[2020]{
20F60, 
20F65, 
	20F67, 
	20F06, 
	20E06
}
\keywords{left-ordered groups, small cancellation theory, Rips construction, quasi-isometric diversity}
\begin{document}

\begin{abstract}
We arrange classical small cancellation constructions to produce left-orderable groups: we show that every finitely generated group is
 the quotient of a  left-ordered small cancellation group by a finitely generated kernel (Rips construction). We give presentations of
  left-ordered hyperbolic small cancellation groups that are not locally indicable, and observe that the class of left-ordered small
   cancellation groups that are not locally indicable is quasi-isometrically diverse. Altogether, this shows that left-orderable small
    cancellation groups form a rich and diverse class.
\end{abstract}

\maketitle


\section{Introduction}

A group is left-orderable if it admits a total order that is invariant under left-multiplication. Equivalently, a countable group is left-orderable if it acts by orientation preserving homeomorphisms on the real line. Left-orderable groups attract a lot of attention in the context of dynamics and low dimensional topology, see e.g. \cite{deroin_groups_2014,clay_ordered_2016}. 
We are interested in  left-orderability of negatively curved groups, such as hyperbolic groups or small cancellation groups. Many natural examples of such groups are left-orderable, for instance, non-abelian free groups, surface groups, one relator quotients of such groups \cite{brodskii_equations_1980,hempel_one-relator_1990,rolfsen-free-2001,antolin_non-orientable_2011} and the fundamental groups of some hyperbolic $3$-manifolds \cite{boyer_orderable_2005}, see also \cite{clay_ordered_2016}. Negatively curved left-orderable groups can also be produced as amalgamated free products or HNN-extensions of such groups under appropriate compatibility conditions \cite[Theorems A and B]{bludov_word_2009}, and as graphs of groups under the appropriate conditions  \cite{chiswell_right_2011}.  
 However, a `typical' negatively curved group is not expected to be left-orderable. Indeed, random groups are torsion-free but not left-orderable in the density as well as the triangular model \cite{orlef_random_2017,orlef-non-orderability_2021}. In addition, every torsion-free small cancellation construction can easily be made not left-orderable by adding two relations to the presentation of the respective group, see Example~\ref{E:non-lo} below. Thus, there is a wealth of torsion-free finitely presented small cancellation groups that are not left-orderable, even though every finitely presented small cancellation group is commensurable to a subgroup of a right-angeled Artin group \cite{agol_virtual_2013,haglund_special_2008,wise_cubulating_2004}, hence, contains a finite index bi-orderable subgroup.
 
In this paper we study left-orderable small cancellation groups. Despite the fact that a typical finitely presented small cancellation group is expected not to be left-orderable, we show that left-orderable small cancellation groups form a rich class of groups, see Sections \ref{SI:rips}, \ref{SI:locally-indicable} and \ref{SI:diversity} below. To do so, we use some classical small cancellation constructions to produce left-orderable groups that split as a free product with amalgamation or as an HNN-extension of free groups. In this process, we choose explicit families of left-orders on the free group, see \cite{sunic_explicit_2013}, and arrange the constructions so that left-orderability can be proven using the respective combination theorem for left-ordered groups \cite[Theorems A and B]{bludov_word_2009}.

\subsection{Subgroups of left-orderable small cancellation groups}\label{SI:rips} In this paper, a small cancellation group is a group given by a group presentation $\langle X\mid R\rangle$, where $X$ is a set of generators and $R$ a set of relations that satisfy the $C'(1/6)$-condition in the sense of \cite[Section V.2]{lyndon_combinatorial_2001}. 
The main result of this paper is a left-orderable version of Rips' construction \cite{rips_subgroups_1982}. 
\begin{thm}\label{IT:Rips}
Let $Q$ be a finitely generated group. Then there is a short exact sequence $$ 1 \rightarrow N \rightarrow G \rightarrow Q\rightarrow 1$$ of groups 
 such that 
\begin{itemize}
\item $G$ is a left-orderable finitely generated small cancellation group,
\item the kernel $N$ is finitely generated. 
\end{itemize}
Moreover, if $Q$ is finitely presented, then $G$ can be chosen finitely presented. 
\end{thm}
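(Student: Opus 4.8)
The plan is to run Rips' construction \cite{rips_subgroups_1982} while keeping track of left-orderability through the combination theorems of Bludov--Glass \cite[Theorems A and B]{bludov_word_2009}. Fix a presentation $Q=\langle a_1,\dots,a_n\mid r_1,r_2,\dots\rangle$, with finitely many relators when $Q$ is finitely presented. I would take $G$ on the generators $a_1,\dots,a_n$ together with two extra generators $x_1,x_2$ meant to generate the kernel, with defining relators of two kinds: conjugation relators expressing each of $a_i^{\pm1}x_\ell a_i^{\mp1}$ as a word in $x_1,x_2$, so that $N:=\langle x_1,x_2\rangle$ is normal and $G/N\cong Q$; and, for each $r_j$, one ``twisted'' relator equating $r_j(a_1,\dots,a_n)$ with a long word in $x_1,x_2$, so that the twisted relators become the $r_j$ modulo $N$. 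The point that makes the construction left-orderable, rather than merely $C'(1/6)$, is that the words in $x_1,x_2$ are not chosen generically but are drawn from an explicit, highly structured family --- positive words, or images of $x_1,x_2$ under a fixed family of positive automorphisms of $F(x_1,x_2)$, in the spirit of the explicit orders of \cite{sunic_explicit_2013} --- chosen so that $G$ visibly decomposes as an amalgam or HNN extension of free groups while still satisfying $C'(1/6)$.

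Given such a presentation, three further checks remain. The $C'(1/6)$ condition is the usual small-cancellation bookkeeping: one verifies that every piece is short relative to every relator, which follows from the choice of the words in $x_1,x_2$ (long, with few long self-overlaps, with short overlaps with the $a_i^{\pm1}$-syllables and with one another, and with each twisted relator much longer than the corresponding $r_j$). The short exact sequence is then immediate: $N$ is normal because both conjugates $a_i^{\pm1}x_\ell a_i^{\mp1}$ are declared to lie in it, $N=\langle x_1,x_2\rangle$ is finitely generated by construction, and $G/N$ has presentation $\langle a_1,\dots,a_n\mid r_1,r_2,\dots\rangle=Q$ because the conjugation relators become trivial and the twisted relators become the $r_j$. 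Finally, if $Q$ is finitely presented the relator set is finite, so $G$ is finitely presented, and a finitely presented $C'(1/6)$ group is finitely generated (indeed hyperbolic).

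Left-orderability of $G$ is the heart of the argument. The aim is to present $G$ explicitly as an amalgamated free product, or an HNN extension, of free groups --- with the conjugation relators supplying HNN identifications whose stable letters are the $a_i$ and with the twisted relators placing each $r_j(a)$ inside a suitable edge group --- then to equip the free vertex groups with left-orders taken from the families of \cite{sunic_explicit_2013}, arranged so that the orders they induce on the edge groups coincide, and finally to invoke \cite[Theorems A and B]{bludov_word_2009} to conclude that $G$ is left-orderable.

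The main obstacle is exactly the tension inside this last step. The $C'(1/6)$ condition pushes the words in $x_1,x_2$ toward looking random, whereas the combination theorems require enough rigidity that, first, $G$ genuinely splits over free subgroups, and, second, one can actually name left-orders on the vertex groups that restrict compatibly along the edges --- this compatibility being the hypothesis of the Bludov--Glass theorems and the reason the explicit families of \cite{sunic_explicit_2013} are used. The twisted relators are the delicate ingredient: they are what keeps $N$ from being free, and a careless choice of the words realizing them would wreck either the splitting over free groups or the order-compatibility. Reconciling these demands --- choosing the defining words simultaneously generic enough for small cancellation and structured enough for a splitting into free groups carrying a compatible family of orders --- is where the real work lies; once it is done, the combination theorems close the argument, and the finitely presented case follows at once.
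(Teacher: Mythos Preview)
Your outline identifies the right ingredients --- Rips-type relators, a splitting of $G$, \v{S}uni\'c's explicit orders, and the Bludov--Glass combination theorems --- and correctly locates the tension between genericity for $C'(1/6)$ and rigidity for order-compatibility. But the concrete shape you propose does not match the paper's construction, and the differences are not cosmetic.

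First, you keep the kernel $2$-generated as in Rips' original paper and propose that the $a_i$ act as HNN stable letters. That forces an \emph{iterated} HNN extension: after adjoining $a_1$ the base group is no longer free, so \v{S}uni\'c's orders are no longer available when you come to adjoin $a_2$, and the compatibility hypothesis of \cite[Theorem~B]{bludov_word_2009} becomes opaque. Worse, the ``twisted'' relators $r_j(a)=w_j(x_1,x_2)$ mix all the $a_i$ at once and have no natural home in that tower; you flag this yourself, but offer no mechanism to absorb them. The paper sidesteps both problems by using a \emph{single} stable letter $q$ over a much larger free group $F$ (on $2n+10$ generators, including two parallel copies $x_1,\dots,x_n$ and $y_1,\dots,y_n$ of the generators of $Q$, two pairs $a_1,a_2$ and $b_1,b_2$, and five auxiliary letters $c,d_1,d_2,d_3,e$). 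Every Rips-type relation --- including the twisted ones --- is encoded as a single identification $h_i=q k_i q^{-1}$ between elements of two free subgroups $H,K\leqslant F$, so $G=F*_H$ is one HNN extension over a free group and Theorem~\ref{T:combination-HNN} applies directly. The price is that $N$ is $(n+10)$-generated, not $2$-generated; see Remark~\ref{IR:Rips1}.

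Second, you leave the compatibility verification as a black box (``name left-orders that restrict compatibly''). In the paper this is the bulk of the work: one takes the normal closure of \emph{two} specific \v{S}uni\'c orders $\leq_{o_1},\leq_{o_2}$ on $F$, related by a permutation of the generators swapping the $a$'s with the $b$'s and the $x$'s with the $y$'s, and then shows by a lengthy case analysis (Lemmas~\ref{L:Rips-LO-1}--\ref{L:Rips-LO-5}) that for every conjugate $\leq^g$ of one order there is a conjugate $\leq'^f$ of the other making $\varphi$ order-preserving on $H$. The auxiliary letters $c,d_1,d_2,d_3,e$ are there precisely to manufacture the element $f$ in the harder cases; they are not decoration. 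None of this is visible in a $2$-generated kernel with generic positive words.
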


\begin{rem}
As a finitely presented small cancellation group is hyperbolic, Theorem \ref{IT:Rips} yields  left-orderable and hyperbolic groups $G$ whenever $Q$ is finitely presented. In general, the group $G$ is a direct limit of hyperbolic groups. 
\end{rem}

\begin{rem} Every finitely generated recursively presented group is a quotient by  a finitely generated subgroup of a subgroup of a right-angled Artin group, hence, a bi-orderable group \cite[Proposition 7.2]{antolin_tits_2015}. However, in this construction the bi-orderable group is not necessarily a small cancellation or hyperbolic group, although it is a subgroup of a finitely presented small cancellation group, see \cite[Proposition 7.2]{antolin_tits_2015} and \cite[Theorem 10.1]{haglund_special_2008}. 
\end{rem}

\begin{rem}\label{IR:Rips1} If $Q$ is $n$-generated, then we construct $G$ to be $2n+10$ generated and $N$ to be $n+10$ generated. In the original Rips construction $N$ can be made $2$-generated \cite{rips_subgroups_1982}.
\end{rem} 

Rips' construction is a prominent tool to construct examples of negatively curved groups with various algebraic, geometric and algorithmic properties that contrast the properties of free or surface groups. This is often done by using the short exact sequence (see Theorem \ref{IT:Rips}) to lift properties of the quotient $Q$ to $G$ or $N$. In view of Theorem \ref{IT:Rips}, this method can be used to produce left-orderable small cancellation groups with the respective properties. For example, there are appropriate left-orderable versions of \cite{rips_subgroups_1982} (properties of subgroups and algorithmic properties), \cite{baumslag_unsolvable_1994}, \cite{bridson_malnormality_2001} (algorithmic and geometric properties), \cite{gitk_growth_2020} and \cite[Theorem 1.3]{coulon_examples_2022} (properties of growth functions and automorphisms). Thus, the properties of left-orderable small cancellation groups encompass a wide range of properties of hyperbolic groups that are not satisfied by free or surface groups. In this sense, the left-orderable small cancellation groups, and the left-orderable hyperbolic groups, are rich classes of negatively curved groups.     

\begin{rem} It remains open, for example, whether there is a left-orderable version of \cite{bumagin_every_2005} and, thus, \cite[Theorem 1.4]{coulon_examples_2022}. In fact, the existence of a short exact sequence as in Theorem \ref{IT:Rips} is not sufficient to prove these results. Also, the cohomological dimension of the groups $G$ of Theorem \ref{IT:Rips} is at most $2$. See \cite{arenas_cubical_2022} for a Rips construction that allows for arbitrarily large cohomological dimension. 
\end{rem}

\begin{rem}  As the groups in Theorem \ref{IT:Rips} are left-orderable they have the unique product property. Thus Theorem \ref{IT:Rips} is in a stark contrast to the Rips construction without unique product of \cite{arzhantseva_rips_2023}. Let us also note  that it is unknown whether or not small cancellation groups have the unique product property, see \cite[Problem 11.40 of Ivanov]{khukhro_unsolved_2023}.
\end{rem}

To prove Theorem \ref{IT:Rips}, we build the groups $G$ as an HNN-extension of a free group. This idea was previously used to produce residually finite groups $G$ \cite{wise_residually_2003}. Thus, the main point in the proof of Theorem \ref{IT:Rips} is to choose a concrete family of left-orders on the free group and to arrange the relators of $G$ so that the assumptions of the respective combination theorem, \cite[Theorem B]{bludov_word_2009}, can be verified. 

\subsection{Groups that are not locally indicable}\label{SI:locally-indicable}

A group is \emph{locally indicable} if every non-trivial finitely generated subgroup admits the integers as a homomorphic image. Every locally indicable group is left-orderable. It turns out that left-orderable small cancellation groups are a rich source for left-orderable groups that are not locally indicable. 

\begin{prop}\label{IP:not-locally-indicable} For every finitely generated group $Q$, the groups $G$ and  $N$ in Theorem \ref{IT:Rips} can be chosen to be left-orderable and not locally indicable.
\end{prop}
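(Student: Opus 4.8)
The plan is to arrange the Rips construction of Theorem~\ref{IT:Rips} so that the kernel $N$ has \emph{finite abelianization}. Recall that a finitely generated group admits no epimorphism onto $\mathbb{Z}$ precisely when its abelianization is finite, and that such a group is therefore not locally indicable. Since in Theorem~\ref{IT:Rips} the group $N$ is finitely generated, nontrivial (indeed infinite), and sits as a subgroup of $G$, making $N^{\mathrm{ab}}$ finite at once shows that $N$ is not locally indicable and, $N$ being a finitely generated subgroup of $G$, that $G$ is not locally indicable either; both groups remain left-orderable by Theorem~\ref{IT:Rips}. So it suffices to modify the construction so that $N^{\mathrm{ab}}$ becomes finite while retaining the $C'(1/6)$-condition and the left-order.

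To make $N^{\mathrm{ab}}$ finite, write $N=\langle g_1,\dots,g_m\rangle$ with $g_1,\dots,g_m$ the generators produced by the Rips construction, and add to the presentation of $G$, for every $i$, one relator of the form $g_i^{\,p}u_i^{-1}$, where $u_i$ is a long product of commutators of $g_1,\dots,g_m$. Such a relator lies in $N$, so modulo $[N,N]$ it reads $p\bar g_i=0$; since $N^{\mathrm{ab}}$ is generated by $\bar g_1,\dots,\bar g_m$, it becomes a quotient of $(\mathbb{Z}/p)^m$, hence finite. The words $u_i$ are to be chosen generically and much longer than $p$ and than the remaining relators, exactly as the Rips words are chosen in the proof of Theorem~\ref{IT:Rips}, so that the enlarged relator set still satisfies $C'(1/6)$; and, as in that proof, $G$ still surjects onto $Q$ with kernel $N$ (the new relators become trivial modulo $N$) and $N$ is still generated by $g_1,\dots,g_m$ (rewriting $x$-conjugates of the $g_i$ as words in the $g_i$ is unaffected). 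Only finitely many relators are added, so $G$ stays finitely presented whenever $Q$ is.

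The hard part is to recover left-orderability of the enlarged $G$. In the proof of Theorem~\ref{IT:Rips}, $G$ is realized as an HNN-extension (or amalgam) of free groups and left-orderability is obtained from the Bludov--Glass combination theorem \cite[Theorems A and B]{bludov_word_2009} by fixing a compatible left-order on the free vertex group, drawn from the explicit families of \cite{sunic_explicit_2013}. Since the new relators involve only $g_1,\dots,g_m$, the relevant vertex group now presents as a free product of $\Gamma:=\langle g_1,\dots,g_m\mid g_i^{\,p}u_i^{-1},\ i=1,\dots,m\rangle$ with a free group, and the associated subgroups and edge isomorphism of the splitting are to be chosen (along with the $u_i$) so as to be unaffected by the new relators. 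As a free product of left-orderable groups is left-orderable by an order restricting to prescribed orders on the factors, the combination-theorem verification then goes through once one knows that $\Gamma$ is left-orderable. Producing such a $\Gamma$ --- a left-orderable but not locally indicable small cancellation group --- is the key point: I would present $\Gamma$ as a $C'(1/6)$-amalgam $A*_C B$ of finitely generated free groups with the embeddings $C\hookrightarrow A$ and $C\hookrightarrow B$ chosen so that $H_1(C)\to H_1(A)\oplus H_1(B)$ has finite cokernel (making $\Gamma^{\mathrm{ab}}$ finite), and so that $A$ and $B$ carry left-orders agreeing on $C$ as required by \cite[Theorem A]{bludov_word_2009}, again using the families of \cite{sunic_explicit_2013}. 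The main obstacle, accordingly, is the simultaneous bookkeeping: assembling a single presentation that is $C'(1/6)$, has finite $H_1(N)$, and meets the order-compatibility hypotheses of \cite[Theorems A and B]{bludov_word_2009} all at once. Once this is arranged, $G$ and $N$ are left-orderable small cancellation groups that are not locally indicable, as desired.
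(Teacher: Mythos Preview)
Your overall instinct---build a left-orderable, non-indicable small cancellation group and plant it inside the Rips group---is exactly right, and is what the paper does. But the specific plan you outline does not go through as written, for two related reasons.

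\textbf{The stable letter obstructs ``$N^{\mathrm{ab}}$ finite''.} In the Rips construction of Theorem~\ref{IT:Rips}, $G$ is an HNN-extension $F*_H$ with stable letter $q$, and $q$ is one of the generators of $N$. Every defining relator of $G$ has the shape $h_i\,q\,k_i^{-1}q^{-1}$, so in $G^{\mathrm{ab}}$ the letter $q$ cancels and no relation touches $\bar q$; hence $N\to G^{\mathrm{ab}}\twoheadrightarrow\mathbb Z$ (projection to the $q$-coordinate) already surjects. Your proposed relators $g_i^{\,p}u_i^{-1}$, if they avoid $q$, do not kill this map, so $N^{\mathrm{ab}}$ stays infinite and $N$ remains indicable. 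If instead you add a relator $q^{\,p}u^{-1}$, then $q$ is no longer a stable letter and the HNN splitting is destroyed, so the Bludov--Glass combination theorem you intend to invoke no longer applies. Either way the argument breaks. A related confusion: you write that the vertex group becomes $\Gamma*\text{(free)}$ with $\Gamma=\langle g_1,\dots,g_m\mid\cdots\rangle$, but the $g_i$ are the generators of $N$, one of which is $q\notin F$; so $\Gamma$ is not a subgroup of the vertex group at all.

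\textbf{What the paper does instead, and how your sketch almost gets there.} The paper does \emph{not} try to make $N^{\mathrm{ab}}$ finite. It first builds, in Section~\ref{S:non-loc-ind}, an explicit left-orderable \emph{perfect} $C'(1/6)$ group $P$ as an amalgam of two free groups, verifying compatibility of \v{S}uni\'c's orders by hand (Lemmas~\ref{L:perfect-LO-1}--\ref{L:perfect-LO-4}). It then replaces the vertex group by the free product $\overline F=F*P$, adds Rips relators so that the new letters $e_1,\dots,e_4$ (the generators of $P$) fall into $N$, and reruns the HNN combination argument using Warren's explicit order on the free product. Now $P\hookrightarrow N\hookrightarrow G$ is a finitely generated perfect subgroup, which is all that ``not locally indicable'' needs; $N$ itself is still indicable. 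Your paragraph about constructing $\Gamma$ as a $C'(1/6)$ amalgam $A*_C B$ with finite $H_1$ and compatible \v{S}uni\'c orders is essentially a description of this $P$, and if you had aimed to \emph{embed} $\Gamma$ into $N$ rather than to force $N^{\mathrm{ab}}$ finite, your plan would converge to the paper's. The genuine work---choosing the amalgamated words so that perfection and the order-compatibility of \cite[Theorem~A]{bludov_word_2009} hold simultaneously---is nontrivial and is the content of Section~\ref{S:non-loc-ind}; your proposal acknowledges this as ``the key point'' but does not carry it out.
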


To produce left-orderable groups that are not locally indicable, we give finite small cancellation presentations of such groups and appeal to the left-orderable combination theorems \cite{bludov_word_2009} to show that they are left-orderable. We then arrange to embed these groups in the kernel $N$ in Theorem \ref{IT:Rips}.

\subsection{Quasi-isometric diversity}\label{SI:diversity}

Bowditch produced continuum many quasi-isometry classes of finitely generated small cancellation groups  \cite{bowditch_continuously_1998}. Bowditch's argument and \cite[Theorem A]{bludov_word_2009} 
yield continuum many quasi-isometry classes of finitely generated left-orderable small cancellation groups, see Example \ref{E:Bowditch} below. We combine the presentations of these groups with small cancellation presentations of left-orderable groups that are not locally indicable. This yields the following. 

\begin{prop}\label{IP:qi-diversity} There are continuum many, up to quasi-isometry, left-orderable small cancellation groups that are not locally indicable. 
\end{prop}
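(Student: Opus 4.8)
The plan is to combine two ingredients, each already essentially available from the preceding discussion: (i) a continuum-sized family of finitely generated small cancellation groups that are pairwise non-quasi-isometric, together with the fact that appropriate such families can be taken left-orderable (Example~\ref{E:Bowditch}, building on Bowditch's construction \cite{bowditch_continuously_1998} and \cite[Theorem A]{bludov_word_2009}); and (ii) a fixed finite small cancellation presentation of a left-orderable group that is not locally indicable, as produced in Section~\ref{SI:locally-indicable}. I would then form, for each parameter in the continuum, a group that simultaneously ``contains'' the Bowditch-type group (so that quasi-isometry invariants still separate the members of the family) and ``contains'' the non-locally-indicable building block (so that every member fails local indicability), while keeping the whole presentation $C'(1/6)$ and left-orderable.

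Concretely, I would take a graded free product or, better, an amalgamated free product / HNN-type combination of the left-orderable Bowditch group $B_\omega$ (for $\omega$ ranging over the continuum of parameters) with the fixed left-orderable non-locally-indicable small cancellation group $H$ from Proposition~\ref{IP:not-locally-indicable}'s construction, along a common infinite cyclic or free subgroup, and then verify: first, that the resulting presentation can be rewritten as a $C'(1/6)$-presentation --- this is the standard trick of taking the relators of $B_\omega$ and of $H$ on disjoint generating sets and adding a small number of ``long'' identification relators of the form $u_i t_i v_i^{-1}$ with the $t_i$ fresh generators, chosen with enough distinct high powers that all pieces are short relative to the relators; second, that the combination is left-orderable --- here I would invoke \cite[Theorem~A or~B]{bludov_word_2009} exactly as in the proof of Theorem~\ref{IT:Rips}, choosing the identified subgroup and the left-orders on $B_\omega$ and $H$ compatibly (using the explicit families of left-orders on free groups from \cite{sunic_explicit_2013}); third, that the combined group is \emph{not} locally indicable --- this is immediate because $H$ embeds as a subgroup and a non-locally-indicable subgroup obstructs local indicability of the ambient group; and fourth, that the map $\omega \mapsto (\text{combined group})$ still hits continuum many quasi-isometry classes --- here I would argue that the quasi-isometry invariant Bowditch uses (a divergence- or filling-type invariant, or the set of ``feasible'' loop-growth exponents) is inherited by the combination, since $B_\omega$ sits as an undistorted subgroup / retract, so distinct $\omega$ give distinct invariants of the big group as well.

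\textbf{The main obstacle} I anticipate is the fourth point: ensuring that the quasi-isometry-separating invariant of $B_\omega$ survives the amalgamation with $H$. Amalgamated products and HNN extensions can, in general, wash out or distort subtle geometric invariants, so I would want the combination to be arranged so that $B_\omega$ is a quasi-isometrically embedded subgroup (ideally a retract, or at least so that the Bowditch invariant of the big group is computably a function of $\omega$). One clean way is to keep the edge group (the identified cyclic/free subgroup) malnormal and undistorted in both factors --- which is precisely the kind of control one already exercises to make the presentation $C'(1/6)$ --- and then cite the fact that under such hypotheses the factor is quasi-isometrically embedded; alternatively, one bypasses this by using the same ``decorate each relator'' strategy as in the Bowditch construction directly, i.e.\ build the continuum of groups by perturbing the relator set of a single fixed left-orderable non-locally-indicable $C'(1/6)$ group in the Bowditch manner, so that non-local-indicability is built in from the start and only the quasi-isometric diversity needs the Bowditch argument. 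I would likely present the latter route, as it keeps all four verifications (small cancellation, left-orderable, not locally indicable, continuum many qi classes) local and uniform, with the left-orderability argument being a single application of the combination theorem from \cite{bludov_word_2009} to the underlying free-product-with-amalgamation decomposition that the $C'(1/6)$-presentation affords.
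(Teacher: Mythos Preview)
Your preferred ``latter route'' is exactly the paper's approach: it enlarges the amalgamated subgroup in the perfect left-orderable amalgam $P=G_1*_H G_2$ of Proposition~\ref{P:perfect-lo} by adjoining the Bowditch generators $\beta_i$ for $i\in I\subset\mathcal N$, obtaining small cancellation groups $P_I=G_1*_{H_I}G_2$ that remain perfect and left-orderable (compatibility for Theorem~\ref{T:combination-amalgam} is checked by rerunning Lemmas~\ref{L:perfect-LO-1}--\ref{L:perfect-LO-4} essentially unchanged, the extra Bowditch generators being handled by the trivial choice $f=g'$). Quasi-isometric diversity then comes directly from Bowditch's taut-length-spectrum argument applied to the small cancellation presentation of $P_I$ itself, so the ``main obstacle'' you flag for your first route never arises.
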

\begin{rem}\label{IR:Cantor}
As the space of left-orders of a free product of left-ordered groups has no isolated points \cite{rivas_left-orderings_2012}, we can arrange for the spaces of left-orders of the groups of Proposition \ref{IP:qi-diversity} to be homeomorphic to a  Cantor set, see Remark \ref{R:Cantor} below. In particular, there are continuum many, up to quasi-isometry, groups whose space of left-orders is a Cantor set, see Corollary \ref{C:Bowditch} below. It remains open whether there is an uncountable such family of one-ended groups. 
\end{rem}

\subsection{Perfect groups}

A group is perfect if its abelianisation is trivial. We note that perfect groups are not locally indicable, and that every simple group is perfect. The class of finitely generated simple left-orderable groups is now a vast and well studied class of groups, see e.g.  \cite{hyde_finitely_2019,mattebon_groups_2020,hyde_uniformly_2021}. For example, there are continuum many finitely generated simple left-orderable groups up to isomorphism \cite[Corollary 1.5]{hyde_finitely_2019}. In particular, there are continuum many finitely generated perfect left-orderable groups up to isomorphism. 

 We prove quasi-isometric diversity of left-orderable and perfect groups. 
\begin{prop}\label{IP:perfect} There are continuum many finitely generated left-orderable and perfect groups up to quasi-isometry. 
\end{prop}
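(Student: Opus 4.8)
The plan is to strengthen Proposition~\ref{IP:qi-diversity} from ``not locally indicable'' to ``perfect''. This is consistent: a non-trivial finitely generated perfect group has trivial abelianisation, hence no infinite cyclic quotient, hence is not locally indicable. So it suffices to re-run the proof of Proposition~\ref{IP:qi-diversity} with a single change: combine Bowditch's continuum-sized family of left-orderable $C'(1/6)$-groups with an auxiliary group that is \emph{perfect} rather than merely not locally indicable, and carry out the combination along a free subgroup that is large enough in homology to kill the abelianisation of the Bowditch factor as well.

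Let $\{G_S\}_S$ be the $2$-generated left-orderable $C'(1/6)$-groups from Bowditch's construction used in Example~\ref{E:Bowditch} and in the proof of Proposition~\ref{IP:qi-diversity}, whose quasi-isometry types are pairwise distinct across an uncountable parameter set. Since $G_S$ is $2$-generated, $H_1(G_S)$ is $2$-generated abelian, so one may pick a rank-two free subgroup $F_S \le G_S$ whose image in $H_1(G_S)$ is all of $H_1(G_S)$ — for instance one generated by the two standard generators of $G_S$ multiplied by suitably long commutators, which is free by a ping-pong argument. Independently, fix once and for all a finitely generated left-orderable perfect $C'(1/6)$-group $P_0$ with a distinguished rank-two free subgroup $F \le P_0$ and an isomorphism $F \cong F_S$ identifying the left-orders induced from suitable left-orders of $P_0$ and of $G_S$. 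Such a $P_0$ is produced by this paper's own techniques: realise it as a small cancellation amalgam of free groups $F_2 \ast_{F_4} F_2$ in which the two edge embeddings are chosen so that, on first homology, their images together span $H_1(F_2)\oplus H_1(F_2) \cong \mathbb{Z}^4$ — by the Mayer--Vietoris sequence of the amalgam this forces $H_1(P_0)=0$; left-orderability comes from \cite[Theorem A]{bludov_word_2009} applied to the explicit free-group orders of \cite{sunic_explicit_2013}, and the $C'(1/6)$-condition from choosing the identifications generically. Take $F$ to be one of the two vertex groups, and exploit the freedom in these choices (and in $F_S$) to arrange the matching of induced orders.

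Now set $\Gamma_S := G_S \ast_{F_S = F} P_0$, realised by a $C'(1/6)$-presentation and left-orderable by \cite[Theorem A]{bludov_word_2009}, exactly as in the proof of Proposition~\ref{IP:qi-diversity}. Then $\Gamma_S$ is perfect: by the Mayer--Vietoris sequence of the amalgam, $H_1(\Gamma_S)$ is the cokernel of $H_1(F) \to H_1(G_S) \oplus H_1(P_0) = H_1(G_S) \oplus 0$, and this map is onto by the choice of $F_S$, so $H_1(\Gamma_S)=0$. That $\{\Gamma_S\}$ still realises continuum many quasi-isometry types is inherited from the proof of Proposition~\ref{IP:qi-diversity}: the mechanism there that distinguishes a continuum of the combined groups applies verbatim with $P_0$ in place of the not-locally-indicable auxiliary factor. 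Note that this in fact yields perfect groups that are small cancellation, which is more than the proposition asks.

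The only genuine work is the simultaneous bookkeeping: the edge identifications of $P_0$, the subgroup $F_S \le G_S$, and all the left-orders must be chosen so that (i) the combined presentations satisfy $C'(1/6)$, (ii) the compatibility hypothesis of \cite[Theorem A]{bludov_word_2009} holds along the amalgamated free subgroup, and (iii) the map $H_1(F) \to H_1(G_S)$ is onto — all at once. Items (i) and (ii) are already dealt with for Proposition~\ref{IP:qi-diversity}, (iii) is the elementary computation above, and the quasi-isometric diversity is inherited with no new input; so the crux is merely to check that these constraints are mutually compatible, which they are because each leaves ample freedom in the choices the others constrain.
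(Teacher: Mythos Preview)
Your approach is genuinely different from the paper's, and as written it has a real gap. You build $\Gamma_S=G_S\ast_F P_0$ as an amalgam of two \emph{non-free} left-orderable groups and defer the verification of left-orderability, small cancellation, and quasi-isometric diversity to ``the proof of Proposition~\ref{IP:qi-diversity}''. But in this paper Proposition~\ref{IP:qi-diversity} is deduced \emph{from} Proposition~\ref{IP:perfect}, not the other way round, so that appeal is circular. More substantively, the hard step you skip is the Bludov--Glass compatibility for $\Gamma_S$: Theorem~\ref{T:combination-amalgam} requires \emph{normal} families $\mathcal L_1,\mathcal L_2$ of left-orders on $G_S$ and on $P_0$ for which the edge isomorphism is compatible in both directions. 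On a free group this is handled by conjugates of an explicit \v{S}uni\'c order and a lengthy case analysis (all of Lemmas~\ref{L:perfect-LO-1}--\ref{L:perfect-LO-4}); on the non-free groups $G_S$ and $P_0$ you have no such explicit orders to work with, and ``exploit the freedom in these choices to arrange the matching of induced orders'' is not a proof. Likewise, an amalgam of two $C'(1/6)$ groups over a free subgroup is not automatically $C'(1/6)$, and Bowditch's taut-spectrum argument needs the Bowditch relators to sit inside a single $C'(1/6)$ presentation of $\Gamma_S$; neither is checked.

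The paper sidesteps all of this by never leaving the free-group setting: it enlarges the amalgamating subgroup in the construction of $P$ itself. Concretely, with $G_1=\langle a,b\rangle$, $G_2=\langle c,d\rangle$ free of rank~$2$, one takes $H_I\leqslant G_1$ to be freely generated by the four ``perfect'' generators $h_1,\dots,h_4$ \emph{together with} the Bowditch words $\beta_i(a,b)$ for $i\in I\subset\mathcal N$, and similarly for $K_I\leqslant G_2$, and sets $P_I=G_1\ast_{H_I}G_2$. Perfectness still follows from the exponent-sum computation of Lemma~\ref{L:perfect}; the compatibility verification of Lemmas~\ref{L:perfect-LO-1}--\ref{L:perfect-LO-4} goes through with one extra, easy case (the new generators satisfy $h_i(a,b)=k_i(c,d)$ verbatim, so $f=g'$ works); and since $P_I$ is itself a $C'(1/6)$ group whose relator set contains the Bowditch relators, Bowditch's taut-length-spectrum argument applies directly to give continuum many quasi-isometry classes. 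This is both simpler and avoids the unverified compatibility step in your outline.
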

 The groups in Proposition \ref{IP:perfect} are realised as small cancellation groups, see Section \ref{S:non-loc-ind}. Thus, Proposition \ref{IP:perfect} implies Proposition \ref{IP:qi-diversity}.  As mentioned in a personal communication by Arman Darbinyan, in a paper under preparation he is going to show quasi-isometric diversity of the class of left-orderable (in fact, bi-orderable) residually finite solvable groups.

\begin{rem} 
The results of this paper appear to be the first applications of small cancellation theory to the study of left-orderable groups. As all groups constructed in this article split as an amalgamated product or an HNN-extension, it would be interesting to construct left-orderable small cancellation groups that do not split. Examples of small cancellation groups that do not split were given in \cite{Pride_some_1983}, see also \cite[Lemma 5.1 and Remark 5.2]{Jankiewicz_cubulating_2022}. These groups are not left-orderable. In fact, their presentations contain relators contradicting left-orderability, see Example \ref{E:non-lo} below. We also note that random groups in the density model do not split \cite{dahmani_random_2011}.
\end{rem}

\subsection*{Acknowledgements}
The author thanks Arman Darbinyan and Yash Lodha for valuable discussions and encouragement, and Yago Antolin, Goulnara Arzhantseva, Arman Darbinyan, Thomas Delzant and Ashot Minasyan for useful comments on a previous version of this work. This work was supported by the Austrian Science Fund (FWF) project P 35079-N and the Mathematisches Forschungsinstitut Oberwolfach under its Research in Pairs program in year 2022 (Project ID 2212p). 

\section{Left-ordered groups}

\subsection{Lexicographic orderings of the free group}
 
Let $\mathcal{A}=\{a_1,a_2,a_3,\ldots, a_n\}$ be an alphabet. Every word $t$ in $\mathcal A$ such that each letter appears exactly once gives rise to an explicit left-order $\leq_t$ on the free group on $\mathcal{A}$ \cite{sunic_explicit_2013}. 
We now discuss such orders.
 
Given a word $t$ as above, we proceed as follows. For $x$ and $y$ in $\mathcal{A}$ we set $x<_t y$ and $y^{-1}<_t x^{-1}$ if $x$ comes before $y$ in the word $t$ read from left to right. We also set $x^{-1} <_t y$ for all $x,y \in \mathcal A$. Next let $g$ be an element in the free group on $\mathcal{A}$ that is represented as a reduced word, i.e. in normal form. Let 
\begin{align} \label{E:definition-tau}
\tau_{t} (g) =2 \sum_{x<_t y}\left(\#_{yx^{-1}}(g) - \#_{y^{-1}x}(g)\right),
\end{align}
where, for $x,y\in \mathcal{A}$, $\#_{yx^{-1}}(g)$ is the number of occurrences of $yx^{-1}$ in $g$, and $\#_{y^{-1}x}(g)$ is the number of occurrences of $y^{-1}x$ in $g$ respectively. Let $\omega(g)$ be $1$ if the exponent of the last letter of $g$ is positive, $-1$ if the exponent of the last letter of $g$ is negative, and $0$ if $g$ is the trivial word. 

We let $\leq_t$ be the relation on the free group on $\mathcal{A}$ defined by 
\begin{align*}
g \leq_t h \hbox{ whenever } \tau_t(g^{-1}h) + \omega(g^{-1}h)\geqslant 0.
\end{align*}

\begin{thm}[Th\'eor\`eme 0.2 of \cite{sunic_explicit_2013}]\label{T:left-order-free-group} Let $\mathcal{A}$ be a finite alphabet, and let $t$ be a word in $\mathcal{A}$ such that each letter appears exactly once. Then $\leq_t$ is a left-order on the free group on $\mathcal{A}$.
\end{thm}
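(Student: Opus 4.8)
The plan is to realise $\leq_t$ as the left-order determined by a positive cone. Write $F$ for the free group on $\mathcal{A}$ and put $\sigma_t(g) = \tau_t(g) + \omega(g)$, so that by definition $g \leq_t h$ if and only if $\sigma_t(g^{-1}h) \geqslant 0$. Left-invariance is then immediate, since $(kg)^{-1}(kh) = g^{-1}h$ for every $k$. It therefore suffices to prove that the set $P := \{\, g \in F : \sigma_t(g) > 0 \,\}$ satisfies (i) $F = P \sqcup \{1\} \sqcup P^{-1}$ and (ii) $P\cdot P \subseteq P$: then $g \leq_t h$ is equivalent to $g^{-1}h \in P \cup \{1\}$, i.e. $\leq_t$ is the canonical left-order attached to the positive cone $P$.

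First I would record two elementary facts. As $\tau_t(g)$ is visibly even while $\omega(g) = \pm 1$ for $g \neq 1$, the integer $\sigma_t(g)$ is odd, hence nonzero, for every nontrivial $g$, whereas $\sigma_t(1) = 0$. Second, I would rewrite $\tau_t$ as a sum over consecutive letters: if $g = c_1 \cdots c_k$ is in normal form with $c_i \in \mathcal{A} \cup \mathcal{A}^{-1}$, then by \eqref{E:definition-tau} one has $\tau_t(g) = 2\sum_{i=1}^{k-1} \varepsilon(c_i, c_{i+1})$, where $\varepsilon(c,c') \in \{-1,0,1\}$ vanishes unless $c$ and $c'$ have opposite exponent signs and is otherwise determined by the linear order $<_t$ on the two letters of $\mathcal{A}$ involved. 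In this form a short computation—pairing each consecutive pair of letters of $g$ with the corresponding pair of $g^{-1}$ and telescoping—shows that $\tau_t(g) + \tau_t(g^{-1})$ equals the exponent sign of the first letter of $g$ minus that of the last; since $\omega(g) + \omega(g^{-1})$ is precisely the negative of that difference, we obtain $\sigma_t(g^{-1}) = -\sigma_t(g)$. Combined with the parity remark, this gives (i), and along the way reflexivity, antisymmetry and totality of $\leq_t$.

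The substance of the argument is (ii). Note first that $gh = 1$ cannot occur for $g, h \in P$, as it would force $\sigma_t(g) = -\sigma_t(h) < 0$. So fix $g, h \in P$ and write $g = g'w$ and $h = w^{-1}h'$, where $w$ is the maximal cancelled segment, so that $gh = g'h'$ is already reduced. Expanding $\tau_t$ over consecutive letters for $g$, $h$ and $gh$, the interior contributions cancel, and after applying the telescoping identity of the previous paragraph to $w$ and $w^{-1}$ the defect $E := \sigma_t(gh) - \sigma_t(g) - \sigma_t(h)$ reduces to an explicit expression in $\varepsilon$ involving only the exponent sign of the first letter of $w$ together with the three junction letters of the decomposition—the last letter of $g'$, the first letter of $w$, and the first letter of $h'$—plus the degenerate cases in which $g'$ or $h'$ is empty. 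A finite case analysis on the exponent signs of these letters, using in each case transitivity of $<_t$ on $\mathcal{A}$ and the three reducedness constraints (the last letter of $g'$ is not the inverse of the first letter of $w$; the first letters of $w$ and of $h'$ are distinct; the last letter of $g'$ is not the inverse of the first letter of $h'$), shows that $E \geqslant -1$ in every case. Since $\sigma_t(g)$ and $\sigma_t(h)$ are positive odd integers, hence at least $1$, this yields $\sigma_t(gh) \geqslant 1 > 0$, establishing (ii).

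I expect the case analysis in the last step to be the only genuine obstacle: it is entirely routine but must be carried out with care, keeping track of which of $g'$, $w$, $h'$ are nonempty and of exactly which two-letter patterns $<_t$ orders in each subcase. Everything ultimately collapses to the single fact that $<_t$ is a linear order on $\mathcal{A}$—equivalently, on $\mathcal{A} \cup \mathcal{A}^{-1}$ with its inversion-reversing property $x <_t y \iff y^{-1} <_t x^{-1}$—which is exactly where the hypothesis that $t$ lists every letter of $\mathcal{A}$ exactly once is used.
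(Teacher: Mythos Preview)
The paper does not supply its own proof of this statement: Theorem~\ref{T:left-order-free-group} is quoted verbatim from \v{S}uni\'c \cite{sunic_explicit_2013} and used as a black box, so there is nothing in the paper to compare your argument against.

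That said, your plan is sound and would constitute a valid self-contained proof. The two preliminary observations are correct: $\tau_t$ is even by the explicit factor of $2$ in \eqref{E:definition-tau}, and your telescoping computation for $\sigma_t(g)+\sigma_t(g^{-1})=0$ goes through exactly as you describe. For the semigroup property, writing $a$ for the last letter of $g'$, $b$ for the first letter of $w$, and $d$ for the first letter of $h'$, one finds in the generic case
\[
E \;=\; 2\varepsilon(a,d)-2\varepsilon(a,b)-2\varepsilon(b^{-1},d)-\operatorname{sign}(b),
\]
and the eight sign patterns for $(a,b,d)$ together with the transitivity of $<_t$ (needed precisely in the two cases where all three $\varepsilon$'s are potentially nonzero) do give $E\in\{-1,1\}$ throughout; the degenerate cases with $g'$ or $h'$ empty reduce to $E=-\operatorname{sign}(b)-2\varepsilon(b^{-1},d)$ and its mirror, which again lie in $\{-1,1\}$. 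So the crucial bound $E\geqslant -1$ holds and your conclusion $\sigma_t(gh)\geqslant 1$ follows. The only comment is that the case check, while entirely mechanical, is long enough that in a write-up you would want to display the formula for $E$ explicitly and organise the cases by sign pattern rather than leave it as ``a finite case analysis''.
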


The following facts on $\tau$ are useful to compare the explicit orders on free groups.  
\begin{lem}\label{L:tau} Let $\varepsilon,\varepsilon_1,\varepsilon_2,\varepsilon_3 \in \{\pm 1\}$ and $x,y,z\in \mathcal A$. Then  
\begin{enumerate}
\item $ \tau(x^{\varepsilon}y^{\varepsilon}) = \tau(x^{\varepsilon}z^{\varepsilon}y^{\varepsilon}), $
 \item $\tau(x^{\varepsilon_1}z^{\varepsilon_2}y^{\varepsilon_3}) = \tau(x^{2\varepsilon_1}z^{\varepsilon_2}y^{\varepsilon_3}) = \tau(x^{\varepsilon_1}z^{2\varepsilon_2}y^{\varepsilon_3}) = \tau(x^{\varepsilon_1}z^{\varepsilon_2}y^{2\varepsilon_3}),  $
\item $ \tau(x^{\varepsilon}y^{- \varepsilon}) = \tau(x^{\varepsilon}z^{\varepsilon}y^{-\varepsilon}), \hbox{ whenever  $x<_ty$ and $z<_ty$ or $x>_ty$ and $z>_ty$}, $
 \item $\tau(x^{\varepsilon}y^{-\varepsilon})= \tau(x^{\varepsilon}z^{-\varepsilon}y^{-\varepsilon}), \hbox{ whenever $x<_tz$ and $x<_ty$ or $x>_tz$ and $x>_ty$}  . $
\end{enumerate} 
\end{lem}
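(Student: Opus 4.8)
The statement to prove is Lemma~\ref{L:tau}, which asserts four invariance properties of the function $\tau = \tau_t$ defined in~\eqref{E:definition-tau} when one inserts an extra letter $z^{\pm 1}$ into a two-letter word, or doubles the exponent of one of the letters. The plan is to compute $\tau$ directly from its definition in each case, keeping careful track of which two-letter subwords $yx^{-1}$ and $y^{-1}x$ (with $x <_t y$) actually occur in the relevant reduced words, and to exploit the fact that $\tau$ only sees adjacent pairs of letters.

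The plan is to exploit that the function $\tau=\tau_t$ of \eqref{E:definition-tau} is additive over the consecutive two‑letter subwords of a reduced word. Writing the reduced form of $g$ as $g=g_1g_2\cdots g_\ell$ with each $g_j\in\mathcal A\cup\mathcal A^{-1}$, one reads off directly from \eqref{E:definition-tau} that $\tau(g)=\sum_{j=1}^{\ell-1}c(g_jg_{j+1})$, where for a reduced two‑letter word $ab$ we set $c(ab)=2\sum_{x<_ty}\bigl(\#_{yx^{-1}}(ab)-\#_{y^{-1}x}(ab)\bigr)$; this holds because $\#_{w}(g)$ is the number of positions $j$ with $g_jg_{j+1}=w$. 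So the first step is to record the values of $c$ on two‑letter words. If $a$ and $b$ carry the same sign, then neither a factor $yx^{-1}$ nor a factor $y^{-1}x$ occurs in $ab$, so $c(ab)=0$. If $a$ and $b$ carry opposite signs, write $ab=x^{\varepsilon}y^{-\varepsilon}$ with $x\neq y$ in $\mathcal A$; checking the cases $\varepsilon=1$ and $\varepsilon=-1$ separately (in the first $ab$ can only match a factor $yx^{-1}$, in the second only a factor $y^{-1}x$) gives $c(x^{\varepsilon}y^{-\varepsilon})=2\varepsilon$ when $y<_tx$ and $c(x^{\varepsilon}y^{-\varepsilon})=0$ when $x<_ty$.

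Granting this, each of (1)--(4) becomes a comparison of a few values of $c$, once one notes that in each identity the displayed words may be assumed reduced: in (3) the hypotheses force $x\neq y$ and $z\neq y$, in (4) they force $x\neq z$ and $x\neq y$, which is exactly what is needed, and in (1), (2) any cancellation would only shorten both sides to words on which the identity is immediate. For (1), every consecutive pair in $x^{\varepsilon}y^{\varepsilon}$ and in $x^{\varepsilon}z^{\varepsilon}y^{\varepsilon}$ is same‑signed, so both sides are $0$. For (2), replacing one of the letters $x^{\varepsilon_1}$, $z^{\varepsilon_2}$, $y^{\varepsilon_3}$ by its square inserts exactly one same‑signed consecutive pair and changes no other consecutive pair, hence changes $\tau$ by $0$.

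For (3), additivity together with the vanishing of $c$ on the same‑signed pair $x^{\varepsilon}z^{\varepsilon}$ gives $\tau(x^{\varepsilon}y^{-\varepsilon})=c(x^{\varepsilon}y^{-\varepsilon})$ and $\tau(x^{\varepsilon}z^{\varepsilon}y^{-\varepsilon})=c(z^{\varepsilon}y^{-\varepsilon})$; by the formula for $c$ these equal $2\varepsilon$ or $0$ according as $y<_tx$ or $x<_ty$, resp.\ according as $y<_tz$ or $z<_ty$, and the hypothesis ``$x<_ty$ and $z<_ty$, or $x>_ty$ and $z>_ty$'' says precisely that these two alternatives occur together. Symmetrically, for (4) the vanishing of $c$ on the same‑signed pair $z^{-\varepsilon}y^{-\varepsilon}$ gives $\tau(x^{\varepsilon}z^{-\varepsilon}y^{-\varepsilon})=c(x^{\varepsilon}z^{-\varepsilon})$, which one compares with $\tau(x^{\varepsilon}y^{-\varepsilon})=c(x^{\varepsilon}y^{-\varepsilon})$: these equal $2\varepsilon$ or $0$ according as $z<_tx$ or $x<_tz$, resp.\ as $y<_tx$ or $x<_ty$, and the hypothesis ``$x<_tz$ and $x<_ty$, or $x>_tz$ and $x>_ty$'' is exactly what makes them agree.

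The argument is bookkeeping, so I do not expect a genuine obstacle; the one point that requires care is the base computation of $c$ on opposite‑signed pairs --- in particular keeping straight that the factors $yx^{-1}$ and $y^{-1}x$ have a fixed sign pattern while $\varepsilon$ ranges over $\pm1$ --- and verifying that the hypotheses in (3) and (4) are exactly, not merely sufficiently, what the comparison needs; indeed, omitting them makes the identities fail.
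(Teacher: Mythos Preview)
Your proof is correct. The paper states Lemma~\ref{L:tau} without proof, treating it as a routine verification from the definition~\eqref{E:definition-tau}; your additive decomposition $\tau(g)=\sum_j c(g_jg_{j+1})$ together with the explicit evaluation of $c$ on two-letter words is precisely the natural way to carry this out, and your case analysis for (1)--(4) is accurate.

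One small caveat: your parenthetical remark that in (2) ``any cancellation would only shorten both sides to words on which the identity is immediate'' is not quite true. If, for instance, $x=z$ and $\varepsilon_1=-\varepsilon_2$ with $y<_t x$ and $\varepsilon_3=\varepsilon_1$, then $x^{\varepsilon_1}z^{\varepsilon_2}y^{\varepsilon_3}$ reduces to $y^{\varepsilon_3}$ with $\tau=0$, whereas $x^{\varepsilon_1}z^{2\varepsilon_2}y^{\varepsilon_3}$ reduces to $x^{-\varepsilon_1}y^{\varepsilon_3}$ with $\tau=-2\varepsilon_1\neq 0$. The lemma is evidently meant to be read for reduced words (this is how $\tau$ is defined and how the lemma is applied throughout the paper), and for reduced words your argument for (2) --- that squaring a letter inserts one same-sign pair and alters no other adjacent pair --- is complete. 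So this is a cosmetic slip, not a gap.
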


\begin{lem}\label{L:tau-2} Let $w_1$ and $w_2$ be elements of the free group such that $w_1w_2$ is reduced. Let $t_1$ be the last letter of $w_1$ and let $s_2$ be the first letter of $w_2$. Then 
$$ \tau(w_1w_2)=\tau(w_1) + \tau(w_2) +\tau(t_1s_2)=\tau(w_1s_2)+\tau(w_2)=\tau(w_1)+\tau(t_1w_2).$$
\end{lem}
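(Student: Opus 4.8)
The plan is to reduce everything to the elementary observation that $\tau$ depends only on the multiset of length-two reduced subwords of its argument, together with the fact that this multiset is additive under concatenation of reduced words.

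First I would record the combinatorial fact underlying all three identities: if $w_1w_2$ is reduced and $a,b$ are letters (in $\mathcal A\cup\mathcal A^{-1}$) with $ab$ reduced, then
$$\#_{ab}(w_1w_2)=\#_{ab}(w_1)+\#_{ab}(w_2)+\#_{ab}(t_1s_2),$$
where $\#_{ab}(t_1s_2)\in\{0,1\}$ equals $1$ precisely when $ab=t_1s_2$. Indeed, any occurrence of the two-letter string $ab$ inside the reduced word $w_1w_2$ either lies entirely within the prefix $w_1$, or entirely within the suffix $w_2$, or straddles the junction, in which case it is forced to be $t_1s_2$; and $t_1s_2$ is itself reduced because $w_1w_2$ is reduced. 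This is just a case split on the position of the occurrence. Applying this identity with $(a,b)=(y,x^{-1})$ and with $(a,b)=(y^{-1},x)$ for each pair $x<_t y$, multiplying by $2$ and summing, the definition \eqref{E:definition-tau} of $\tau_t$ yields by linearity
$$\tau(w_1w_2)=\tau(w_1)+\tau(w_2)+\tau(t_1s_2),$$
which is the first asserted equality.

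For the remaining two equalities I would specialise this formula. Taking $w_2$ to be the single letter $s_2$, and noting that $\tau$ vanishes on a one-letter word (it contains no length-two subword), gives $\tau(w_1s_2)=\tau(w_1)+\tau(t_1s_2)$; adding $\tau(w_2)$ to both sides and invoking the first identity yields $\tau(w_1s_2)+\tau(w_2)=\tau(w_1w_2)$. Symmetrically, taking $w_1$ to be the single letter $t_1$ gives $\tau(t_1w_2)=\tau(w_2)+\tau(t_1s_2)$, and adding $\tau(w_1)$ produces $\tau(w_1)+\tau(t_1w_2)=\tau(w_1w_2)$.

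I do not expect a genuine obstacle here; the only points needing a word of care are that $w_1$ and $w_2$ are tacitly assumed nonempty so that $t_1$ and $s_2$ exist, that $t_1s_2$ is a reduced word (so that $\tau(t_1s_2)$ is defined at all), which is exactly where the hypothesis that $w_1w_2$ is reduced enters, and the degenerate cases $|w_1|=1$ or $|w_2|=1$, which the formula handles automatically since $\tau$ is zero on single letters.
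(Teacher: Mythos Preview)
Your argument is correct. The paper actually states Lemma~\ref{L:tau-2} without proof, so there is nothing to compare against; your elementary counting argument---decomposing two-letter subwords of $w_1w_2$ according to whether they lie in $w_1$, in $w_2$, or straddle the junction---is exactly the natural verification the authors presumably had in mind, and your derivation of the second and third equalities by specialising $w_2=s_2$ and $w_1=t_1$ is clean.
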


\subsection{Explicit orderings on free products}\label{S:free-product}

If $G_1$ and $G_2$ are left-ordered, then the free product $G_1*G_2$ is left-orderable. We work with explicit left-orders on the free product given by \cite[Definition 4.1, Proposition 4.2]{warren_orders_2020}. These are given as follows. We represent every element $g$ of the free product $G_1*G_2$ in normal form, that is, 
$g=g_{i_1}g_{i_2}\cdots g_{i_m},
$ where $g_{i_j}$ is an element of $G_1$ or $G_2$ and two consecutive $g_{i_j}$ are in distinct groups. Each $g_{i_j}$ is refereed to as of a syllable of $g$; and a syllable is positive if it is positive in the order on $G_1$, or $G_2$ respectively. Let $\overline{\tau}$ be the function on the free product defined by 
\begin{align*}
\overline{\tau}(g)=\# \hbox{(positive syllables in $g$)} -& \# \hbox{(negative syllables in $g$)} \\
 & + \# \hbox{(index jumps in $g$)} - \# \hbox{(index drops in $g$)}, 
\end{align*}
where an index jump occurs at $j$ if $g_{i_j}\in G_1$ and $j<m$ (then $g_{i_{j+1}}\in G_2$), and an index drop occurs at $j$ if $g_{i_j}\in G_2$ and $j<m$ (then $g_{i_{j+1}}\in G_1$).
We let $\overline{\leq}$ be the relation on $G_1*G_2$ given by 
\begin{align*}
 g \overline{\leq} h \hbox{ whenever } \overline{\tau}(g^{-1}h) \geqslant 0.
 \end{align*}
\begin{thm}[Proposition 4.2 of \cite{warren_orders_2020}]\label{T:left-order-free-product} Let $G_1$ and $G_2$ be left-ordered groups. 
Then $\overline{\leq}$ is a left-order on $G_1*G_2$ that extends the given left-orders on $G_1$ and $G_2$.  
\end{thm}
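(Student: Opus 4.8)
The plan is to verify the axioms of a left-order directly from the formula for $\overline{\tau}$. Two of them come for free. Left-invariance holds because $g\overline{\leq}h$ depends only on $\overline{\tau}(g^{-1}h)$ and $(fg)^{-1}(fh)=g^{-1}h$. And $\overline{\leq}$ extends $\leq_1$ and $\leq_2$ because an element of $G_i$ is a single syllable, so its $\overline{\tau}$-value is $+1$ or $-1$ according to its sign in $\leq_i$ (and $0$ only for the identity). Everything else will follow from three properties of $\overline{\tau}$, to be established for all $g,h\in G_1*G_2$: \emph{(P1)} $\overline{\tau}(g)=0$ if and only if $g=e$; \emph{(P2)} $\overline{\tau}(g^{-1})=-\overline{\tau}(g)$; \emph{(P3)} if $\overline{\tau}(g)\geq 0$ and $\overline{\tau}(h)\geq 0$ then $\overline{\tau}(gh)\geq 0$. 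Granting these, reflexivity is $\overline{\tau}(e)=0$; antisymmetry follows from (P1) and (P2); totality follows from (P2); and transitivity follows by applying (P3) to $g^{-1}h$ and $h^{-1}k$, whose product is $g^{-1}k$.

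For (P1) and (P2) I would introduce a uniform bookkeeping. Writing $g=g_1\cdots g_m$ in normal form, one has $\overline{\tau}(g)=\sum_{j=1}^m\sigma(g_j)+\sum_{j=1}^{m-1}\iota(g_j)$, where $\sigma(g_j)\in\{\pm1\}$ records the sign of the syllable $g_j$ in its factor and $\iota(g_j)\in\{\pm1\}$ records whether $g_j$ lies in $G_1$ or $G_2$. The values $\iota(g_1),\iota(g_2),\ldots$ alternate in sign, so the second sum depends only on $\iota(g_1)$ and the parity of $m$; this reduces (P2) to a short computation, using that $g^{-1}=g_m^{-1}\cdots g_1^{-1}$ reverses the list of syllables, negates each $\sigma$ (the factor orders are left-orders), and fixes each $\iota$. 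For (P1) one notes that each summand $\sigma(g_j)+\iota(g_j)$ is even, so $\overline{\tau}(g)\equiv\sigma(g_m)\equiv 1\pmod 2$ whenever $g\neq e$; in particular $\overline{\tau}(g)$ is odd, hence nonzero, for $g\neq e$. This oddness is what will give (P3) room to work.

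The real content — and where I expect the difficulty to be — is (P3), which says precisely that $\{g:\overline{\tau}(g)>0\}$ is closed under multiplication; the obstacle is that computing the normal form of $gh$ from those of $g$ and $h$ involves cancellation and syllable-merging, which $\overline{\tau}$ does not see additively. I would analyse the normal form of $gh$ according to the cancellation depth $d$, the largest $d$ with $g_{m-i+1}h_i=e$ for $i\leq d$, splitting into: $d=0$ with $g_m,h_1$ in different factors (no interaction); $d=0$ with $g_m,h_1$ in the same factor and $g_mh_1\neq e$ (a boundary merge); $d\geq 1$ with one of $g,h$ equal, up to inverse, to a syllable-prefix of the other, so that one side cancels entirely; and $d\geq 1$ with a further merge at the boundary after the $d$ cancellations. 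In each case $\overline{\tau}(gh)-\overline{\tau}(g)-\overline{\tau}(h)$ can be computed explicitly: the contributions of the cancelled or merged middle block to the index sum $\sum\iota$ telescope to $0$, because consecutive syllables alternate between the two factors (here the parity identity for $\sum_{j=1}^{m-1}\iota(g_j)$ is used once more), and what survives is either a single index correction $\iota(g_m)\in\{\pm1\}$ (the no-cancellation case, a free-product analogue of Lemma~\ref{L:tau-2}) or a single sign correction $\sigma(g_ih_j)-\sigma(g_i)-\sigma(h_j)\in\{\pm1\}$ at the merged syllable (using that the positive cone of a left-ordered group is a subsemigroup). In every case this yields $\overline{\tau}(gh)\geq\overline{\tau}(g)+\overline{\tau}(h)-1$; combining with (P1), which upgrades $\overline{\tau}(g)\geq 0$ to $\overline{\tau}(g)\geq 1$ for $g\neq e$ (and likewise for $h$), forces $\overline{\tau}(gh)\geq 1>0$, while the cases $g=e$ or $h=e$ are trivial. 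The bookkeeping in the deep-cancellation cases is the fiddly step; the rest is formal.
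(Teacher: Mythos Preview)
The paper does not prove this statement at all: Theorem~\ref{T:left-order-free-product} is quoted from \cite{warren_orders_2020} and used as a black box, so there is no proof in the paper to compare against. That said, your direct argument is correct and self-contained. The parity observation (each pair $\sigma(g_j)+\iota(g_j)$ is even, leaving $\overline{\tau}(g)\equiv\sigma(g_m)\pmod 2$) cleanly gives (P1), and your reduction of (P3) to the inequality $\overline{\tau}(gh)\geq\overline{\tau}(g)+\overline{\tau}(h)-1$ goes through: writing $g=g'u$ and $h=u^{-1}h'$ with $u$ the cancelled block, one has $\overline{\tau}(g)+\overline{\tau}(h)=\overline{\tau}(g')+\overline{\tau}(h')$ because the two boundary index terms $\iota(g_{m-d})$ and $\iota(h_d)$ lie in opposite factors and cancel, and then the single merge at $g_{m-d}h_{d+1}$ contributes $\sigma(g_{m-d}h_{d+1})-\sigma(g_{m-d})-\sigma(h_{d+1})\in\{\pm1\}$ exactly as you say. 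The only case not quite covered by your four-way split is when one side cancels completely but the other does not (e.g.\ $d=m<n$), where there is no merge; there the correction is the single index term $-\iota(h_m)\in\{\pm1\}$, so the same bound holds. With that minor addition the argument is complete.
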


\subsection{Combination theorems}
 Let $G$ be a left-ordered group and let $\leq$ be  a left-order on $G$. 
 If $g\in G$, we define the \emph{conjugate order} $\leq^g$ as follows: 
 $$ 1 \leq^g h \hbox{ if } 1 \leq ghg^{-1}, \hbox{for every $h\in G$}.$$
Let $\mathcal L$ be a set of left-orders on $G$. The set $\mathcal L$ is \emph{normal} if it is closed under conjugation by the elements of $G$, that is, for every $g\in G$ and order $\leq \in \mathcal L$, the conjugate order $\leq^g$ is in $\mathcal L$. 
The \emph{normal closure} of $\mathcal L$ is the set of left-orders that contains all conjugates of the left-orders in $\mathcal L$.

Let $G_1$ and $G_2$ be left-ordered groups, with subgroups $H_1\leq G_1$ and $H_2\leq G_2$ isomorphic to a group $H$. Let us denote the respective isomorphisms by $\varphi_i:H_i \to H$. Let $\mathcal L _1$ and $\mathcal L _2$ be a family of left-orders on $G_1$, and $G_2$ respectively. 
The isomorphism $\varphi:=\varphi_2^{-1}\varphi_1$ is \emph{compatible} for the pair $(\mathcal L_1,\mathcal L_2)$ if and only if 
for all $\leq_1 \in \mathcal L_1$  there is  $\leq_2 \in \mathcal L _2 $ such that  for all $h_1\in H_1$ we have 
\begin{align*}1\leq_1 h_1 \hbox{ implies that } 1\leq_2 \varphi(h_1).
\end{align*}
If families of left-orders $\mathcal L_1$ and $\mathcal L_2$ are fixed, if the isomorphism $\varphi$ is compatible for $(\mathcal L_1,\mathcal L_2)$ and if its inverse $\varphi^{-1}$ is compatible for $(\mathcal L_2,\mathcal L_1)$, then we say that $\varphi$ is  \emph{compatible} for short. 

We use the following theorems to produce negatively curved left-ordered groups. The first is about left-orderability of amalgams, the second of HNN-extensions. 

\begin{thm}[Theorem A of \cite{bludov_word_2009}]\label{T:combination-amalgam} Let $G_1$ and $G_2$ be left-ordered groups. 
 Let  $H_1\leq G_1$ and $H_2\leq G_2$ be isomorphic to a group $H$ via isomorphisms $\varphi_i:H_i \to H$. Then $G_1*_H G_2$ is left-orderable if, and only if, there are normal families of left-orders $\mathcal L _1$ and $\mathcal L _2$  on $G_1$, respectively $G_2$, such that $\varphi=\varphi_2^{-1}\varphi_1:H_1\to H_2$ is compatible. 
\end{thm}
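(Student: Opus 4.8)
The statement is an equivalence, and only the sufficiency direction carries real content; I would dispatch necessity first. Suppose $G:=G_1*_HG_2$ carries a left-order $\preceq$, and recall that inside $G$ the subgroups $H_1$ and $H_2$ are identified, so that $h_1=\varphi(h_1)$ in $G$ for every $h_1\in H_1$. For $i\in\{1,2\}$ let $\mathcal L_i$ be the set of restrictions to $G_i$ of the conjugate orders $\preceq^g$, $g\in G$; each such restriction is a left-order on $G_i$, and since $(\preceq^g)^k=\preceq^{gk}$ for $k\in G$, the family $\mathcal L_i$ is closed under conjugation by elements of $G_i$, hence normal. Compatibility is immediate: given $\leq_1=(\preceq^g)|_{G_1}$, take $\leq_2=(\preceq^g)|_{G_2}$; for $h_1\in H_1$ we get $1\leq_1 h_1\iff 1\preceq^g h_1\iff 1\preceq^g\varphi(h_1)\iff 1\leq_2\varphi(h_1)$, using $h_1=\varphi(h_1)$ in $G$, and the same choice handles $\varphi^{-1}$.

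For sufficiency the plan is to manufacture a left-invariant total order on $G$ out of the normal families. First I would record a preparatory observation: using that both $\varphi$ and $\varphi^{-1}$ are compatible, for each $\leq_1\in\mathcal L_1$ one can choose $\leq_2\in\mathcal L_2$ making $\varphi\colon(H_1,\leq_1)\to(H_2,\leq_2)$ an isomorphism of ordered groups. Indeed, compatibility gives some $\leq_2$ with $1\leq_1 h_1\Rightarrow 1\leq_2\varphi(h_1)$; feeding $\leq_2$ into the compatibility of $\varphi^{-1}$ returns some $\leq_1'\in\mathcal L_1$ whose positive cone on $H_1$ contains that of $\leq_1$, hence equals it (two left-orders on a group with one positive cone inside the other coincide), and chasing the implications back shows the forward implication above is an equivalence. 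The order on $G$ is then defined by fixing transversals of $H_i$ in $G_i$, writing each $g\in G$ in normal form $g=t_1\cdots t_n h$ with $h\in H$ and the $t_k$ nontrivial transversal elements alternating between the factors, and deciding the sign of $g$ by descending through its syllables from the left: the leading syllable lies in some $G_i\setminus H$ and is judged by an order of $\mathcal L_i$; if it is order-trivial one conjugates the current order by it — staying inside $\mathcal L_i$ precisely because the family is \emph{normal} — and passes to the remaining word, ending at the $H$-part $h$, whose sign is unambiguous because, by the preparatory observation, the orders arriving from the two sides agree on $H$. The positive cone $P\subseteq G$ consists of the elements so declared positive.

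The hard part is verifying that $P$ is a positive cone, i.e. $G=P\sqcup P^{-1}\sqcup\{1\}$ and $P\cdot P\subseteq P$. Trichotomy together with $P\cap P^{-1}=\emptyset$ should come out of the recursion once one checks that reversing a normal form negates the sign (tracking how the conjugations compose). The genuine obstacle is closure under multiplication: for $g,g'\in P$ one must pass to the normal form of $gg'$, which in general involves cancellation at the junction $t_nh\cdot t_1'$ — possibly an entire central block collapsing into $H$ — and confirm that the sign read off from that normal form is again positive. This is exactly where \emph{normality} is used (so the order judging a syllable can be transported by conjugation as cancellation changes which syllable is leading) and where \emph{compatibility} at $H$ is used (so that, when a block collapses into $H$, the verdicts inherited from $G_1$ and from $G_2$ coincide). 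I expect the whole proof to reduce to carefully carrying out this bookkeeping — an induction on total syllable length with a case split on the amount of cancellation at the junction.

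As an alternative for the countable case, I would instead realize the chosen orders as faithful orientation-preserving actions of $G_1$ and $G_2$ on $\mathbb R$ and glue them along a blow-up of the Bass--Serre tree of $G_1*_HG_2$: compatibility is precisely what lets the two $H$-actions bordering each edge be conjugated into one another by an orientation-preserving homeomorphism, so the glued action on the resulting totally ordered line is well-defined and faithful, and reading off a point's orbit recovers a left-order on $G$. The difficulty simply migrates to showing the blow-up carries a genuine dense linear order on which $G$ acts faithfully, which is again the combination of the normality (equivariance along the tree) and compatibility (matching across edges) hypotheses.
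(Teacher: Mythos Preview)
The paper does not prove this theorem: it is quoted verbatim as Theorem~A of Bludov--Glass and used as a black box, so there is no in-paper proof to compare against.

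On the content of your sketch: the necessity direction and the preparatory observation (upgrading one-sided compatibility to an order-isomorphism $\varphi\colon(H_1,\leq_1)\to(H_2,\leq_2)$) are correct. The sufficiency sketch, however, has a genuine gap. Your recursive rule reads ``judge the leading syllable $t_1$ by an order of $\mathcal L_i$; if it is order-trivial conjugate and pass to the remainder''. But in a left-ordered group no nonidentity element is order-trivial, so every transversal syllable $t_1\in G_i\setminus H$ already has a definite sign and the recursion, as written, never proceeds past the first syllable. Reading it as ``the sign of $g$ is the sign of $t_1$'' gives something that is neither well-defined (it depends on the transversal) nor closed under multiplication. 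What you seem to have in mind is a lex-type construction where one first looks at $t_1$ modulo $H$; but that only makes sense when $H_i$ is convex in $(G_i,\leq_i)$, which is \emph{not} part of the hypothesis. The actual Bludov--Glass argument is considerably more delicate: the normal family is used not to conjugate a single order along the word, but to produce, for every finite subset of $G$, a coherent system of signs (essentially via a compactness/Conradian-type reduction), and this is where the real work lies. Your closing acknowledgement that ``the whole proof reduces to carefully carrying out this bookkeeping'' is accurate, but the bookkeeping cannot be carried out starting from the recursion you wrote down; a different organizing principle is needed.

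Your dynamical alternative for the countable case is closer in spirit to a viable approach, but note that compatibility gives you, for each edge of the Bass--Serre tree and each order on one side, \emph{some} matching order on the other side; to glue coherently over the whole tree you must make all these local choices simultaneously and consistently, and it is exactly this global coherence that is nontrivial and again requires the full strength of the normal-family hypothesis.
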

 
\begin{thm}[Theorem B of \cite{bludov_word_2009}]\label{T:combination-HNN}
Let $G$ be a left-ordered group. 
 Let  $H_1\leq G$ and $H_2\leq G$ be isomorphic to a group $H$ via isomorphisms $\varphi_i:H_i \to H$. Then $G*_H$ is left-orderable if and only if there is a normal family of left-orders $\mathcal L$ on $G$ such that $\varphi=\varphi_2^{-1}\varphi_1:H_1\to H_2$ is compatible. 
\end{thm}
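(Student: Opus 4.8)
The plan is to prove the equivalence by treating the two implications in turn: necessity is a short computation with conjugate orders, while sufficiency is reduced to the amalgamated case of Theorem~\ref{T:combination-amalgam}.

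For necessity, suppose $\preceq$ is a left-order on $G*_H$, and let $t$ be the stable letter, so that $\varphi(h_1)=t^{-1}h_1t$ in $G*_H$ for $h_1\in H_1$ (with the evident sign convention). Set $\mathcal{L}=\{\,\preceq^{w}|_{G}:w\in G*_H\,\}$, the collection of restrictions to $G$ of all conjugates of $\preceq$; each member is a left-order on $G$. One checks that $(\preceq^{w}|_{G})^{g}=\preceq^{wg}|_{G}$ for $g\in G$, so $\mathcal{L}$ is normal. A direct manipulation inside $G*_H$ gives, for every $w$ and every $h_1\in H_1$, that $1\preceq^{w}h_1\iff 1\preceq^{wt}\varphi(h_1)$, so $\preceq^{wt}|_{G}\in\mathcal{L}$ witnesses compatibility of $\varphi$ at $\preceq^{w}|_{G}$; the mirror computation with $t^{-1}$ handles $\varphi^{-1}$. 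Hence $\varphi$ is compatible for $(\mathcal{L},\mathcal{L})$.

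For sufficiency, given a normal family $\mathcal{L}$ on $G$ with $\varphi$ compatible, I would first strip off the $\mathbb{Z}$-direction. Let $\sigma\colon G*_H\to\mathbb{Z}$ be the retraction with $\sigma(t)=1$ and $\sigma(G)=0$, and $K=\ker\sigma$, so that $G*_H=K\rtimes\langle t\rangle$ with $\langle t\rangle\cong\mathbb{Z}$. Since left-orderability is stable under group extensions (a lexicographic positive cone does the job), it suffices to left-order $K$. Unrolling the one-vertex, one-loop graph of groups along $\sigma$ exhibits $K$ as the bi-infinite amalgam $\cdots *_{H} G *_{H} G *_{H} \cdots$ of copies $G_{n}=t^{n}Gt^{-n}$ of $G$, consecutive copies being glued along a copy of $H$ by a conjugate of $\varphi$. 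Writing $K$ as the increasing union of the finite sub-amalgams $K_{n}=G_{-n}*_{H}\cdots *_{H}G_{n}$ and using that left-orderability is a local property — a group is left-orderable iff all its finitely generated subgroups are — it is enough to left-order each $K_{n}$.

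Finally I would left-order $K_{n}$ by induction on $n$, applying Theorem~\ref{T:combination-amalgam} each time a single new copy $G_{\pm n}$ is absorbed from the middle outward. The two halves of the hypothesis are used to transport a coherent choice of left-orders outward along the line (rightward via $\varphi$, leftward via $\varphi^{-1}$), and normality of $\mathcal{L}$ supplies the conjugate orders demanded by Theorem~\ref{T:combination-amalgam} inside the growing amalgam. The main obstacle is precisely this induction: one must strengthen the inductive statement so that at stage $n$ one carries not merely a left-order but a normal family of left-orders on $K_{n}$ whose restriction to the outermost copy of $G$ is rich enough to invoke Theorem~\ref{T:combination-amalgam} again at stage $n+1$. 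This is where the full strength of a normal \emph{family} of orders — rather than a single left-order on $G$ — is indispensable, as already the Klein bottle group $\langle a,t\mid t^{-1}at=a^{-1}\rangle$ shows: it is left-orderable, yet $\varphi\colon a\mapsto a^{-1}$ carries neither of the two left-orders on $\langle a\rangle$ to itself, so no single order survives the construction.
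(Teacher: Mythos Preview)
The paper does not prove this statement: it is quoted as Theorem~B of Bludov--Glass \cite{bludov_word_2009} and used as a black box, so there is no in-paper proof to compare against.

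As for your argument itself: the necessity direction is correct and complete. For sufficiency, the strategy---kill the $\mathbb{Z}$-quotient, unroll the kernel as a bi-infinite line of amalgams, exhaust by finite sub-amalgams, and induct via Theorem~\ref{T:combination-amalgam}---is sound and is essentially the route taken in \cite{bludov_word_2009}. But you explicitly stop before the main technical point: you say one must carry a normal \emph{family} of orders on $K_n$ through the induction, and then do not do it. This is not a formality. Theorem~\ref{T:combination-amalgam} as stated only outputs a single left-order on the amalgam, not a normal family with prescribed restrictions to the outermost vertex group, so invoking it as a black box does not close the induction. One has to go inside its proof (or prove a strengthened version that outputs such a family) to make the step from $K_n$ to $K_{n+1}$ work. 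Until that is written out, the sufficiency half remains an outline rather than a proof.
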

 
\section{Small cancellation groups}

Let $\mathcal{A}$ be an alphabet and let us denote by $|.|$ the word length of the elements in the free group over $\mathcal{A}$. 
If $g$ and $h$ are elements of the free group on $\mathcal{A}$, we denote its Gromov product by 
$(g,h)=\frac{1}{2}(|g|+|h|-|g^{-1}h|).$
Let $w_1,w_2,\ldots, w_i, \ldots$ be elements of the free group on $\mathcal{A}$ represented as words in reduced and cyclically reduced form. The words $w_1,w_2,\ldots, w_i, \ldots$, or the words $w_i$ for short, are \emph{small cancellation words} if $|w_i|>6$, for all $i>0$, and if 
$$ (v_1,v_2)<\frac{1}{6} \min\{|v_1|,|v_2|\},$$
for all words $v_1,v_2$ that are distinct reduced and cyclically reduced cyclic conjugates of the words $w_i$ or its inverses. 

If $G$ is a group given by a presentation $G=\langle \mathcal{A} \mid w_1,w_2,\ldots, w_i, \ldots\rangle$ and the words $w_i$ are small cancellation words, we say that $G$ is a small cancellation group. To be more precise, such groups are often refereed to as of classical  $C'(1/6)$-small cancellation groups. Classical results on small cancellation groups include their torsion-freeness if the relators $w_i$ do not contain any proper power, and,  under the additional assumption that the presentation of $G$ is finite, their hyperbolicity.

We frequently use the following fact. 
\begin{prop}\label{P:reduced-free} Let $w_1,w_2,\ldots, w_i, \ldots$ be elements of the free group on $\mathcal{A}$. If the words $w_i$ are small cancellation words, then $w_1,w_2,\ldots, w_i, \ldots$ freely generate a free subgroup in the free group on $\mathcal{A}$. 
\end{prop}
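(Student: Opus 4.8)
The plan is to show that the subgroup $\langle w_1, w_2, \ldots \rangle$ is free on the $w_i$ by verifying a Ping-Pong-style / no-reduction criterion: no non-trivial reduced word in the $w_i^{\pm 1}$ can represent the identity in the free group on $\mathcal A$. Concretely, I would argue that when one forms a product $u = w_{i_1}^{\varepsilon_1} w_{i_2}^{\varepsilon_2} \cdots w_{i_k}^{\varepsilon_k}$ with each $w_{i_j}^{\varepsilon_j} \ne (w_{i_{j+1}}^{\varepsilon_{j+1}})^{-1}$, the free cancellation that occurs between the block coming from $w_{i_j}^{\varepsilon_j}$ and the block coming from $w_{i_{j+1}}^{\varepsilon_{j+1}}$ is strictly less than $\tfrac16$ of the length of each of the two relevant factors. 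This is exactly the content of the small cancellation hypothesis: the cancelled piece is a common prefix/suffix of two distinct cyclically reduced cyclic conjugates (or inverses) of the $w_i$, hence a \emph{piece}, and pieces have length less than $\tfrac16 \min\{|v_1|,|v_2|\}$. The key quantitative point is that after cancelling the piece between consecutive factors, \emph{more than} $\tfrac23$ of the letters of the middle factor $w_{i_j}^{\varepsilon_j}$ survive (it loses at most a $\tfrac16$ from each side), so nothing disappears entirely and no new cancellation is created further along; consequently $u$ is non-trivial and in fact $|u| \ge \tfrac23\sum_j |w_{i_j}| > 0$.

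The steps, in order, would be: (1) reduce to showing that every freely reduced, cyclically $w_i$-reduced word in the $w_i^{\pm1}$ of length $k \ge 1$ is non-trivial in the free group — a word is a relation iff it is; (2) set up the notation of a product $u = \prod_{j=1}^k w_{i_j}^{\varepsilon_j}$ together with the maximal cancellation lengths $p_j$ between the $j$-th and $(j{+}1)$-st factor, and observe $p_j < \tfrac16 \min\{|w_{i_j}|, |w_{i_{j+1}}|\}$ because the cancelled segment is a piece (here the hypothesis that consecutive factors are not mutually inverse guarantees the two cyclic conjugates in question are genuinely distinct, so the small cancellation inequality applies); (3) show by the $\tfrac16 + \tfrac16 < 1$ estimate that each interior factor retains a non-empty "core" of length $> \tfrac23|w_{i_j}|$ that is untouched by cancellation on either side, so the reduced form of $u$ contains these cores in order and separated, giving $|u|>0$; (4) conclude that the $w_i$ satisfy no relation, hence freely generate a free subgroup.

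The main obstacle — or rather the point requiring the most care — is step (3): making precise that cancellation between $w_{i_j}^{\varepsilon_j}$ and $w_{i_{j+1}}^{\varepsilon_{j+1}}$ cannot "interact" with the cancellation between $w_{i_{j+1}}^{\varepsilon_{j+1}}$ and $w_{i_{j+2}}^{\varepsilon_{j+2}}$, i.e. that the left-cancellation and the right-cancellation on the factor $w_{i_{j+1}}^{\varepsilon_{j+1}}$ occur in disjoint subwords. This is where one needs $|w_i|>6$ (so that $\tfrac16|w_i|$ is less than $|w_i|$ even with both cuts) and, more essentially, the \emph{cyclic} form of the $C'(1/6)$ condition, which controls overlaps of cyclic conjugates rather than just the words themselves; without cyclic reducedness the cancellation could wrap around. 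I would handle this by a careful bookkeeping of indices, or alternatively cite that this is precisely the statement underlying the standard fact that $C'(1/6)$ relators form a malnormal free factor (cf.\ \cite[Section V.2]{lyndon_combinatorial_2001}); the argument is classical and the proposition is really just an extraction of this well-known phenomenon for later use.
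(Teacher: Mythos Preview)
Your argument is correct and is the standard one. Note, however, that the paper does not actually supply a proof of this proposition: it is stated as a ``fact'' and used freely thereafter, with the implicit understanding that it is classical small cancellation lore (cf.\ \cite[Chapter V]{lyndon_combinatorial_2001}). So there is no ``paper's own proof'' to compare against; your write-up simply fills in what the paper leaves to the reader. The core estimate you isolate --- that each interior factor $w_{i_j}^{\varepsilon_j}$ loses strictly less than $\tfrac16|w_{i_j}|$ to cancellation on each side, hence retains a nonempty central segment --- is exactly the right mechanism, and your identification of the cancelled segment as a piece (via the hypothesis that consecutive factors are not mutual inverses, so the two relevant cyclic conjugates are genuinely distinct) is the point where the $C'(1/6)$ condition enters.
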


We need some more classical notation of small cancellation theory. Let $v_1$ and $v_2$ be two distinct elements of the free group on $\mathcal{A}$. A \emph{piece} of $v_1$ and $v_2$ is a common prefix $p$ of these two words, that is, $v_1=pv_1'$ and $v_2=pv_2'$. 
 If words $w_1,w_2,\ldots, w_i, \ldots$ are given, we refer to a \emph{piece} as of a piece of any two words $v_{1}$ and $v_{2}$ among all  distinct words that are reduced and cyclically reduced cyclic conjugates of the words $w_i$ or its inverses. 

\begin{rem} The words $w_i$ are small cancellation words, if, and only if, for every piece $p$ of any two words $v_{1}$ and $v_{2}$ among all  distinct words that are reduced and cyclically reduced cyclic conjugates of the words $w_i$ or its inverses, we have $|p|<\frac{1}{6}\min\{|v_1|,|v_2|\}$.
\end{rem}

We say that an element $g$ in the free group on $\mathcal{A}$ is \emph{$p$-reduced with $w_i$} if 
$$(g^{-1},w_i)\leqslant \max\{|p|\mid \hbox{$p$ is a piece and a prefix of $w_i$}\}.$$
Finally, if $p_i$ is a piece and a prefix of $w_i$, and if $p_i^{-1}$ is a suffix of $g$, we say that $g$ and $w_i$ are \emph{$p_i$-reduced}. 

Next we discuss two examples of small cancellation groups.  
Our first example shows that every torsion-free small cancellation construction can be made non-left-orderable. 

\begin{ex}\label{E:non-lo} Let $G$ be a group generated by $x_1,\ldots,x_n$. Let $r_1(x_1,x_2)$ be a positive word, i.e. the exponent of every letter $x_1$, $x_2$, is positive, and let $r_2(x_1,x_2)$ be a word such that every occurrence of $x_1$ has positive exponent, and every occurrence of $x_2$ has negative exponent. In addition, we assume that $r_1$ and $r_2$ are no proper powers. If $r_1$ and $r_2$ are relations in $G$, then $G$ is not left-orderable, as the subgroup generated by $x_1$ and $x_2$ is not left-orderable, see \cite[Theorem 1.48]{clay_ordered_2016}. Taking $r_1$ and $r_2$ appropriate small cancellation words, we observe that every small cancellation construction can be adapted to produce non-left-orderable groups by adding $r_1$ and $r_2$ to the presentation of the group. As $r_1$ and $r_2$ are no proper powers, $r_1$ and $r_2$ do not produce any new torsion elements. 
\end{ex}

The next example is a starting point for this study, and a first application of small cancellation theory to the study of left-orderable groups. 

\begin{ex}\label{E:Bowditch} Let $\beta_i(x,y)=x^iyx^iy^2x^iy^3\cdots x^iy^{20}$. 
Then $\{\beta_i \mid i>20\}$ is a set of small cancellation words whose length grows linearly in $i$. Let $\mathcal{N}:=\{2^{2^n}\mid n\in \mathbb{N} \hbox{ and $n>20$}\}$ and let $I\subset \mathcal N$. 
We denote by $B_I$ the group defined by  
$$
B_I=\langle a_1,a_2,b_1,b_2 \mid \beta_i(a_1,a_2)=\beta_i(b_1,b_2)\hbox{, for all $i \in I$} \rangle.
$$
We denote by $G_1$ the free group on $\{a_1,a_2\}$ and by $G_2$ the free group on $\{b_1,b_2\}$. Moreover, we let $H_I$  and $K_I$ be the free subgroup freely generated by the words $\beta_i(a_1,a_2)$, for all $i\in I$, in $G_1$, and $\beta_i(b_1,b_2)$, for all $i\in I$, in $G_2$, respectively. We let $\varphi:H_I\to K_I$ be the isomorphism sending $\beta_i(a_1,a_2)$ to $\beta_i(b_1,b_2)$ and note that 
$B_I=G_1*_{H_I}G_2$. By Theorem~\ref{T:combination-amalgam}, see \cite[Corollary 37]{bergman_ordering_1990} and \cite[Corollary 5.10]{bludov_word_2009}, the groups $B_I$ are left-orderable for all $I\subset \mathcal N$. As the groups $B_I$ are small cancellation groups, by the arguments of \cite{bowditch_continuously_1998}, see also \cite[Example 3.2]{minasyan_quasi_2021}, there are continuum many groups $B_I$ up to quasi-isometry. 
\end{ex}

We record the content of Example \ref{E:Bowditch} in the following proposition. 

\begin{prop}[{Corollary 37 of \cite{bergman_ordering_1990} and \cite{bowditch_continuously_1998}}] \label{P:Bowditch} There are continuum many left-orderable (small cancellation) groups up to quasi-isometry. 
\end{prop}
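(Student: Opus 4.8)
The plan is to turn Example~\ref{E:Bowditch} into a proof: exhibit an uncountable family $\{B_I\}$ of $C'(1/6)$-presented groups, show each is left-orderable via the combination theorem for amalgams, and show the family realises continuum many quasi-isometry types. First I would fix the words $\beta_i(x,y)=x^iyx^iy^2x^iy^3\cdots x^iy^{20}$ and check that $\{\beta_i\mid i>20\}$ is a set of small cancellation words. Each $\beta_i$ is a positive, cyclically reduced word of length $20i+210$ whose $x$-syllables all have length $i$ and whose $y$-syllables have the pairwise distinct lengths $1,\dots,20$. Hence a common subword of two distinct cyclically reduced cyclic conjugates of $\beta_i^{\pm1}$ and $\beta_j^{\pm1}$ is forced to be short: it cannot meet both the all-positive family and the all-negative family nontrivially, and a subword long enough to contain a full $x$-syllable flanked by two $y$-syllables pins down both $i$ (forcing $i=j$) and its cyclic position up to the first disagreement of $y$-exponents, so every piece has length at most roughly $i+2\cdot 20$, which is less than $\tfrac16(20i+210)$ once $i>20$. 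I expect this to be a routine, slightly fiddly overlap estimate; the key input is that the $y$-part of each relator is bounded while the $x$-runs grow linearly.

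Next I would realise $B_I$ as an amalgam and deduce left-orderability. Writing $G_1=F(a_1,a_2)$ and $G_2=F(b_1,b_2)$, Proposition~\ref{P:reduced-free} shows that $\{\beta_i(a_1,a_2)\mid i\in I\}$ and $\{\beta_i(b_1,b_2)\mid i\in I\}$ freely generate free subgroups $H_I\le G_1$ and $K_I\le G_2$, and $\beta_i(a_1,a_2)\mapsto\beta_i(b_1,b_2)$ defines an isomorphism $\varphi\colon H_I\to K_I$ with $B_I=G_1*_{H_I}G_2$; by the previous paragraph this is a $C'(1/6)$-group. To invoke Theorem~\ref{T:combination-amalgam} I need normal families of left-orders $\mathcal L_1$ on $G_1$ and $\mathcal L_2$ on $G_2$ for which $\varphi$ is compatible (in both directions). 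Here I would quote Bergman's result \cite[Corollary~37]{bergman_ordering_1990}, as recorded in \cite[Corollary~5.10]{bludov_word_2009}, that between free groups any such isomorphism of subgroups is compatible with respect to suitable normal families of left-orders; this yields that every $B_I$ is left-orderable, so each $B_I$ is a left-orderable small cancellation group.

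Finally, the real content is quasi-isometric rigidity, for which I would invoke Bowditch's argument \cite{bowditch_continuously_1998} (see also \cite[Example~3.2]{minasyan_quasi_2021}). Because the index set $\mathcal N=\{2^{2^n}\mid n>20\}$ is doubly-exponentially sparse and the relator lengths $|\beta_i|=20i+210$ grow linearly in $i$, a quasi-isometry $B_I\to B_J$ must match the scales at which the groups admit "long thin" null-homotopic loops; Bowditch's invariant built from this data distinguishes the $B_I$, so distinct $I\subseteq\mathcal N$ give non-quasi-isometric groups, and there are continuum many such $I$. I expect this to be the main obstacle: one must check that Bowditch's invariant survives the passage from his original $2$-generator presentations to the $4$-generator amalgams $B_I$. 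In practice this is precisely what the cited references establish, so I would cite it rather than reprove it.
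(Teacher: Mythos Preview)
Your proposal is correct and follows essentially the same approach as the paper: the paper's ``proof'' of Proposition~\ref{P:Bowditch} is simply the content of Example~\ref{E:Bowditch}, which you have faithfully expanded, citing the same ingredients (Proposition~\ref{P:reduced-free}, Theorem~\ref{T:combination-amalgam} via \cite[Corollary~37]{bergman_ordering_1990} and \cite[Corollary~5.10]{bludov_word_2009}, and \cite{bowditch_continuously_1998}). One minor imprecision: Bowditch's invariant does not literally show that \emph{distinct} $I\subseteq\mathcal N$ yield non-quasi-isometric $B_I$, only that a quasi-isometry forces the index sets to agree up to a bounded multiplicative shift of scales, which with the doubly-exponential spacing still leaves continuum many classes; your conclusion is unaffected.
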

Combining this with \cite{rivas_left-orderings_2012} yields the following. 

\begin{cor}[{Corollary 37 of \cite{bergman_ordering_1990}, \cite{bowditch_continuously_1998}, \cite{rivas_left-orderings_2012}}] \label{C:Bowditch} There are continuum many (small cancellation) groups up to quasi-isometry whose space of left-orders is a Cantor set. 
\end{cor}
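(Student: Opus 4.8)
The plan is to run Bowditch's construction on groups of the form $B_I*\mathbb Z$ instead of on the groups $B_I$ themselves, so that the theorem of Rivas \cite{rivas_left-orderings_2012} on spaces of left-orders of free products applies. For a non-empty subset $I\subseteq\mathcal N$ put $C_I:=B_I*\mathbb Z$, where $B_I$ is the group of Example~\ref{E:Bowditch}. First I would note that $C_I$ is again a small cancellation group: it is presented by $\langle a_1,a_2,b_1,b_2,c\mid \beta_i(a_1,a_2)=\beta_i(b_1,b_2),\ i\in I\rangle$, i.e.\ by the presentation of $B_I$ with one additional generator $c$ occurring in no relator, so the set of relators is unchanged and still consists of small cancellation words. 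Since $C_I$ is a free product of the two non-trivial left-orderable groups $B_I$ (left-orderable by Example~\ref{E:Bowditch}) and $\mathbb Z$, it is left-orderable.

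Next I would identify the space of left-orders $\mathrm{LO}(C_I)$. For any countable group $G$, sending a left-order to its positive cone identifies $\mathrm{LO}(G)$ with a closed subset of $\{0,1\}^G$, so $\mathrm{LO}(G)$ is compact, metrisable and totally disconnected, and it is non-empty precisely when $G$ is left-orderable. By \cite{rivas_left-orderings_2012}, if $G$ is a free product of two non-trivial left-orderable groups then $\mathrm{LO}(G)$ has no isolated points. Applied to $C_I=B_I*\mathbb Z$ this shows that $\mathrm{LO}(C_I)$ is non-empty, compact, metrisable, totally disconnected and has no isolated points, hence is homeomorphic to the Cantor set by Brouwer's topological characterisation.

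It remains to see that the $C_I$ represent continuum many quasi-isometry classes. By Proposition~\ref{P:Bowditch}, i.e.\ by Bowditch's argument as in Example~\ref{E:Bowditch}, the groups $B_I$, $I\subseteq\mathcal N$, fall into $2^{\aleph_0}$ quasi-isometry classes; moreover the $B_I$ with $I\neq\emptyset$ are one-ended (see below). Hence we may fix a family $\mathcal I$ of non-empty subsets of $\mathcal N$ with $|\mathcal I|=2^{\aleph_0}$ such that the $B_I$, $I\in\mathcal I$, are one-ended and pairwise non-quasi-isometric. For $I\in\mathcal I$ the free-product decomposition $C_I=B_I*\mathbb Z$ is, up to its $\mathbb Z$-factor, the Stallings--Dunwoody decomposition of $C_I$, and its unique one-ended vertex group is $B_I$. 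By the theorem of Papasoglu and Whyte on quasi-isometries between finitely generated groups with infinitely many ends, the set of quasi-isometry types of the one-ended vertex groups of such a decomposition is a quasi-isometry invariant of the group; here this set equals $\{[B_I]\}$. Therefore $C_I$ and $C_J$ are quasi-isometric only if $B_I$ and $B_J$ are, so the groups $C_I$, $I\in\mathcal I$, realise $2^{\aleph_0}$ quasi-isometry classes. With the previous two paragraphs this proves the corollary, and Example~\ref{E:Bowditch} ensures that the $C_I$ may be taken to be small cancellation groups.

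The delicate point is the quasi-isometry lower bound. Passing from $B_I$ to $B_I*\mathbb Z$ is forced on us by Rivas' theorem, and since a free product with $\mathbb Z$ does change the quasi-isometry type, one must ensure that it does not collapse Bowditch's continuum of classes; this is exactly what the Papasoglu--Whyte machinery secures, by letting one recover the one-ended factor $B_I$ coarsely from $C_I$. For it to apply one needs the routine fact that the amalgams $B_I=F_2*_{H_I}F_2$ are one-ended for $I\neq\emptyset$: they are torsion-free, and $H_I$ --- freely generated by the elements $\beta_i(a_1,a_2)$, $i\in I$ --- is contained in no proper free factor of $F_2$ (the $\beta_i$ are not proper powers, and for $|I|\geq 2$ their abelianisations are pairwise non-proportional), which rules out a non-trivial free-product decomposition of $B_I$. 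A less machinery-heavy alternative would be to verify directly that Bowditch's quasi-isometry invariant of the $B_I$ survives in the coarse geometry of the tree of spaces underlying $C_I=B_I*\mathbb Z$.
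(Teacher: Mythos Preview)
Your argument and the paper's begin identically: set $R_I:=B_I*\mathbb Z$, note it is still $C'(1/6)$ with the same relator set as $B_I$, and apply Rivas to get that $\mathrm{LO}(R_I)$ is a Cantor set. They diverge at the quasi-isometry step. The paper simply observes that Bowditch's invariant (the taut length spectrum) is read off from the relator lengths of a $C'(1/6)$ presentation; since $R_I$ and $B_I$ have identical relator sets, their spectra coincide, and the invariant that already separates the $B_I$ separates the $R_I$ for free. This is exactly the ``less machinery-heavy alternative'' you name in your final sentence---the paper takes it, in one line.

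Your Papasoglu--Whyte route works in principle, but the one-endedness step has a gap as written: the implication ``$H_I$ lies in no proper free factor of $F_2$ $\Rightarrow$ $F_2*_{H_I}F_2$ is freely indecomposable'' is false in general. For instance $H=\langle a^2,b^2\rangle\leq F_2=\langle a,b\rangle$ lies in no proper (necessarily cyclic) free factor, yet $\langle a,b,c,d\mid a^2=c^2,\ b^2=d^2\rangle$ visibly splits as the free product of two Klein-bottle groups. One-endedness of the $B_I$ is true, but needs a different justification---e.g.\ via the Whitehead link of the $C'(1/6)$ presentation, or a Bass--Serre argument that actually uses the shape of the $\beta_i$. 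The paper's direct appeal to Bowditch's invariant sidesteps all of this.
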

\begin{proof} We use the notation of Example \ref{E:Bowditch}. Let $I\subset \mathcal N$ and let $R_I=B_I*\mathbb{Z}$. By \cite{rivas_left-orderings_2012}, the space of left-orders of $R_I$ is a Cantor set. The group $R_I$ still is a small cancellation group and its relators are equal to the relators of $B_I$. In fact, Bowditch's taut length spectra of $R_I$ and $B_I$ equal, hence, the claim. 
\end{proof}

\section{Groups that are not locally indicable}\label{S:non-loc-ind}

We build left-orderable perfect groups $P$ using the combination theorem  for amalgams (Theorem 2.4). 
 Let $w_1(x,y),\ldots,w_4(x,y),v_1(x,y),\ldots,v_4(x,y)$ be small cancellation words such that, for all $1\leqslant i \leqslant 4$, 
\begin{enumerate}
\item the sum of the exponents of $x$ in $w_1(x,y),w_2(x,y)$,$v_1(x,y)$ and $v_2(x,y)$ is $-1$, and $0$ in all other words,
\item the sum of the exponents of $y$ in $w_3(x,y),w_4(x,y)$,$v_3(x,y)$ and $v_4(x,y)$ is $-1$, and $0$ in all other words.
\end{enumerate}
Moreover we assume that that all words start and end with $x$ or $y$. 

Let $G_1=<a,b>$ and $G_2=<c,d>$ be free groups. Let 
\[
\begin{array}{lll}
h_1=w_1(a,b)\; a^3 \; v_1(a,b) & & k_1= w_1(c,d)\;c^2\;v_1(c,d) \\
h_2=w_2(a,b)\;a^2\;v_2(a,b) & & k_2=w_2(c,d)\;c^3\;v_2(c,d) \\
h_3= w_3(a,b)\;b^3\;v_3(a,b) & & k_3= w_3(c,d)\;d^2\;v_3(c,d) \\
h_4=w_4(a,b)\;b^2\;v_4(a,b) & & k_4 = w_4(c,d)\;d^3\;v_4(c,d) \\ 
\end{array}
\]

Let $H=\langle h_1,\ldots, h_4\rangle \leq G_1$ and $K=\langle k_1,\ldots, k_4\rangle \leq G_2$. Note that $H$ and $K$ are free groups on the respective four generators (this follows from small cancellation assumption). We let $\varphi:H\to K$ be the isomorphism that sends $h_i$ to $k_i$, for all $i$. 
Let 
$$P=G_1*_{H}G_2$$
be the amalgamation over $H$ with respect to $\varphi$. We note that $P$ is a small cancellation group. 

\begin{prop}\label{P:perfect-lo}
The group $P$ is left-orderable and perfect.
\end{prop}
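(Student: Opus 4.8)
The plan is to prove the two assertions separately, as they are essentially independent once the construction is in place.

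\textbf{Perfectness.} The key observation is that the relations $h_i = k_i$ in $P = G_1 *_H G_2$ force, upon abelianisation, a system of linear equations on the images of $a, b, c, d$. Passing to $P^{\mathrm{ab}}$, write $\bar a, \bar b, \bar c, \bar d$ for the images of the generators. The relation $h_1 = k_1$ abelianises to the equation obtained by summing exponents: on the left the $x$-exponent sum of $w_1(a,b)a^3v_1(a,b)$ is $(-1) + 3 + (\text{sum of }x\text{-exp in }v_1) $, but hypothesis (1) says the total $x$-exponent sum of $w_1$ plus $v_1$ is $-1$, so the left side contributes $(-1) + 3 = 2$ times $\bar a$ plus $0 \cdot \bar b$; the right side $w_1(c,d)c^2v_1(c,d)$ contributes $(-1) + 2 = 1$ times $\bar c$. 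Running through all four relations in the same way — using that $w_i, v_i$ contribute $x$-exponent sum $-1$ in cases $i=1,2$ and $0$ in cases $i = 3,4$, and $y$-exponent sum $-1$ in cases $i = 3,4$ and $0$ in cases $i=1,2$ — one reads off from the table:
\begin{align*}
h_1 = k_1: \quad & 2\bar a = \bar c, & h_2 = k_2: \quad & \bar a = 2\bar c, \\
h_3 = k_3: \quad & 2\bar b = \bar d, & h_4 = k_4: \quad & \bar b = 2\bar d.
\end{align*}
From $2\bar a = \bar c$ and $\bar a = 2\bar c$ we get $\bar a = 4\bar a$, i.e. $3\bar a = 0$, and then $\bar c = 2\bar a$; similarly $3\bar b = 0$ and $\bar d = 2\bar b$. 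So $P^{\mathrm{ab}}$ is a quotient of $(\mathbb Z/3)^2$. To conclude triviality one needs one more relation; this is where I expect the real subtlety to lie, and I would resolve it by choosing the small cancellation words $w_i, v_i$ with a little more care — e.g. arranging that $h_1$ itself, viewed as an element of $G_1 = \langle a, b\rangle$, has total $a$-exponent $2$ and total $b$-exponent $0$ while $h_3$ has total $a$-exponent $0$ and $b$-exponent $2$, together with a fifth constraint (or an asymmetry between the $w$'s and $v$'s) that kills $\bar a$ and $\bar b$ outright. Concretely, I would build the presentation so that the relator set, when abelianised, has the matrix of exponents of full rank over $\mathbb Z$ on $\{a,b,c,d\}$; the displayed $2{\times}2$ blocks already handle $(\bar a,\bar c)$ and $(\bar b,\bar d)$ up to the $3$-torsion, and a direct inspection of the remaining exponent data (from the prescribed parts $a^3, a^2, b^3, b^2$ versus $c^2, c^3, d^2, d^3$) shows $3\bar a = 3\bar b = 0$, while picking $w_1, v_1$ so that some additional coprimality holds forces $\bar a = \bar b = 0$ and hence $\bar c = \bar d = 0$, so $P^{\mathrm{ab}} = 1$.

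\textbf{Left-orderability.} Here I would invoke Theorem~\ref{T:combination-amalgam}: it suffices to produce normal families $\mathcal L_1$ on $G_1$ and $\mathcal L_2$ on $G_2$ for which $\varphi$ is compatible in both directions. The natural candidates are the normal closures of the lexicographic orders $\leq_t$ from Theorem~\ref{T:left-order-free-group}, for a judiciously chosen ordering word $t$ on $\{a,b\}$ (resp. $\{c,d\}$). Compatibility means: for each $\leq_1 \in \mathcal L_1$ there is $\leq_2 \in \mathcal L_2$ with $1 \leq_1 h \implies 1 \leq_2 \varphi(h)$ for all $h \in H$. Since $H$ is free on $h_1, \dots, h_4$ and $K$ is free on $k_1, \dots, k_4$ with $\varphi(h_i) = k_i$, and since for a reduced word in the $h_i$ the sign of $\tau_t$ plus $\omega$ is controlled by the first few and last few syllables, the strategy is to show that the $\leq_t$-order of a product of $h_i$'s is determined by data that transfers verbatim to the corresponding product of $k_i$'s. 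The $h_i$ and $k_i$ are designed so that $h_i$ and $k_i$ begin and end with the same letter pattern (both start/end with $x$ or $y$ per the hypothesis "all words start and end with $x$ or $y$"), and Lemmas~\ref{L:tau} and~\ref{L:tau-2} let one compute $\tau_t$ of a concatenation from the boundary letters of the syllables; one then checks that the boundary-letter combinatorics of $h_{i_1}\cdots h_{i_m}$ and $k_{i_1}\cdots k_{i_m}$ agree after matching $a \leftrightarrow c$, $b \leftrightarrow d$ and choosing $t$ on $\{c,d\}$ to be the image of $t$ on $\{a,b\}$. Normality of the family on each side is automatic since we take normal closures, and the compatibility check is symmetric in $\varphi$ and $\varphi^{-1}$ by the symmetry of the construction.

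\textbf{Main obstacle.} I expect the genuinely delicate point to be the left-orderability half — specifically, verifying that for \emph{every} $\leq_1$ in the normal family (not just $\leq_t$ itself, but all its conjugates) one can find a matching $\leq_2$. Conjugating $\leq_t$ by $g \in G_1$ replaces the relevant sign function by $h \mapsto \mathrm{sign}(\tau_t(g h g^{-1}) + \omega(ghg^{-1}))$, and one must argue that the restriction of $\leq_1$ to the free subgroup $H$ is again an order that "looks like" a $\leq_{t'}$-type order (or at least lies in a controlled family) so that it can be transported through $\varphi$. This is exactly the place where the choice of the $w_i, v_i$ and of the prescribed powers $a^3, a^2, b^3, b^2$ versus $c^2, c^3, d^2, d^3$ matters: one wants the "order type" of $H$ inside $(G_1, \leq_1)$ to match that of $K$ inside $(G_2, \leq_2)$ uniformly. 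I would handle this by isolating a combinatorial invariant of a reduced word in $h_1, \dots, h_4$ — roughly, the sequence of signs $(\tau_t + \omega)$ on all prefixes, computed via Lemma~\ref{L:tau-2} from the syllable-boundary letters — showing it is conjugation-equivariant in a way that is preserved by $\varphi$, and then quoting Theorem~\ref{T:combination-amalgam}.
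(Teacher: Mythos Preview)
Your computation rests on a misreading of hypothesis (1): the condition is that \emph{each} of $w_1, w_2, v_1, v_2$ has $x$-exponent sum $-1$ (the clause ``and $0$ in all other words'' makes this per-word reading unambiguous). With the correct reading, $h_1 = w_1(a,b)a^3v_1(a,b)$ has $a$-exponent sum $(-1)+3+(-1)=1$ and $b$-exponent sum $0$, while $k_1 = w_1(c,d)c^2v_1(c,d)$ has $c$-exponent sum $(-1)+2+(-1)=0$ and $d$-exponent sum $0$. So the relation $h_1=k_1$ abelianises directly to $\bar a = 0$; the other three relations likewise kill $\bar c, \bar b, \bar d$ outright. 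There is no $3$-torsion to eliminate and no need to impose further constraints on the $w_i, v_i$ --- this is exactly the content of Lemma~\ref{L:perfect}. Your table $2\bar a=\bar c$, $\bar a=2\bar c$, etc., comes from treating $w_i+v_i$ together as contributing $-1$, which is not what was assumed.

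\textbf{Left-orderability.} Your outline is the right one and matches the paper's strategy: take $\mathcal L_i$ to be the normal closure of $\leq_{o_i}$ with $o_1=ab$, $o_2=cd$, and verify compatibility via Theorem~\ref{T:combination-amalgam}. You also correctly identify the obstacle --- handling all conjugates $\leq_1^g$, not just the base order. But your proposed resolution (``isolate a conjugation-equivariant combinatorial invariant'') is too vague to count as a proof. The paper's argument is much more concrete: given $g\in G_1$, one explicitly \emph{constructs} an element $f\in G_2$ such that $\tau_1(ghg^{-1})=\tau_2(f\varphi(h)f^{-1})$ for all $h\in H$. When $g$ is $p$-reduced with every $h_i$ one simply takes $f=g(c,d)$ (Corollary~\ref{C:perfect-LO-2}); when $g$ has nontrivial overlap with some $h_i$ one must do a case analysis on how far the cancellation penetrates $h_i$ --- this is where the specific powers $a^3,a^2,b^3,b^2$ versus $c^2,c^3,d^2,d^3$ are used, via Lemma~\ref{L:tau}(2), to absorb the mismatch (Lemmas~\ref{L:perfect-LO-3} and~\ref{L:perfect-LO-4}). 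Your sketch does not engage with this case analysis, and without it the compatibility check is incomplete.
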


We now prove this proposition. We start with the following. 
\begin{lem}\label{L:perfect} The group $P$ is perfect. 
\end{lem}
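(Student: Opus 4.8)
The plan is to show that the abelianisation of $P=G_1*_H G_2$ is trivial by analysing the relations imposed on the generators $a,b,c,d$ when we abelianise. Recall that the abelianisation of an amalgamated product $G_1*_H G_2$ is the quotient of $G_1^{\mathrm{ab}}\oplus G_2^{\mathrm{ab}}$ by the relations identifying the images of $H$ in each factor. Since $G_1$ and $G_2$ are free on $\{a,b\}$ and $\{c,d\}$, we have $G_1^{\mathrm{ab}}\oplus G_2^{\mathrm{ab}}\cong \mathbb{Z}^4$ generated by (the images of) $a,b,c,d$, and $P^{\mathrm{ab}}$ is the quotient of this $\mathbb{Z}^4$ by the four relations $\overline{h_i}=\overline{k_i}$, $1\le i\le 4$, where the bar denotes passage to $\mathbb{Z}^4$.

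First I would compute the image of each $h_i$ and $k_i$ in the abelianisation, using the exponent-sum hypotheses (1) and (2) on the small cancellation words $w_j,v_j$. For $h_1=w_1(a,b)\,a^3\,v_1(a,b)$: the exponent sum of $a$ in $w_1$ plus that in $v_1$ is $-1+(-1)=-2$ (wait — re-reading, hypothesis (1) says the sum of exponents of $x$ in $w_1,w_2,v_1,v_2$ is $-1$; I will take this to mean the $x$-exponent-sum of each of these four words is $-1$, so in $h_1$ the total $a$-exponent is $-1+3+(-1)=1$, and the $b$-exponent is $0$). Similarly $\overline{h_2}=a$, $\overline{h_3}=b$, $\overline{h_4}=b$, while $\overline{k_1}=c^{-1+2-1}=c^0$... let me instead just record the structure: the relation $\overline{h_1}=\overline{k_1}$ becomes an equation of the form $a=c^{?}$ or $a=1$ in $P^{\mathrm{ab}}$, and likewise for the others, where the discrepancy between $h_i$ and $k_i$ is exactly the difference between the power of $a$ (resp. $c$) and $b$ (resp. $d$) inserted in the middle. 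The point of the asymmetric choices $a^3$ vs $c^2$, $a^2$ vs $c^3$, etc., is precisely to make these four relations force $a=b=c=d=1$.

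The key computation: from $h_1,h_2$ and $k_1,k_2$ one gets, in $P^{\mathrm{ab}}$ written additively, relations of the form $3a = 2c$ (from $\overline{h_1}=\overline{k_1}$, after cancelling the common abelianised contribution of $w_1,v_1$ which is the same letters evaluated at $a,b$ vs $c,d$ — here one must be careful, since $w_1(a,b)$ abelianises to $(\text{exp-sum of }a)\cdot a + (\text{exp-sum of }b)\cdot b$ and $w_1(c,d)$ to the same coefficients times $c,d$). So $\overline{h_1}=\overline{k_1}$ reads $-a+3a + (\beta_1)b = -c+2c+(\beta_1)d$, i.e. $2a+\beta_1 b = c + \beta_1 d$, and $\overline{h_2}=\overline{k_2}$ reads $a+\beta_2 b = 2c+\beta_2 d$, where $\beta_j$ is the $b$-exponent sum of $w_j$, which equals the $d$-exponent sum of... no: $w_j(a,b)$ and $w_j(c,d)$ are the \emph{same} word, so the $b$-coefficient of $\overline{w_j(a,b)}$ equals the $d$-coefficient of $\overline{w_j(c,d)}$. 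Combining the two relations from $i=1,2$ eliminates $b,d$ and yields a relation among $a,c$ only, forcing (by the $3$-vs-$2$ and $2$-vs-$3$ asymmetry, whose determinant $3\cdot3-2\cdot2=5\ne 0$, or more simply subtracting) that $a$ and $c$ are torsion of coprime-related orders forcing $a=c=0$; similarly $i=3,4$ forces $b=d=0$. Hence $P^{\mathrm{ab}}=0$.

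The main obstacle I anticipate is bookkeeping: correctly tracking the abelianised contributions of the words $w_j,v_j$ and verifying that the middle powers were chosen so that the resulting $4\times 4$ integer matrix of relations on $(a,b,c,d)$ has trivial cokernel (equivalently, determinant $\pm 1$, or at least that the Smith normal form is the identity). I would set up this matrix explicitly, read off from hypotheses (1),(2) that the columns coming from $b$ and $d$ have a controlled form, perform the elimination described above, and check the determinant is a unit. Once the matrix computation is done the conclusion $P^{\mathrm{ab}}=0$, i.e. $P$ is perfect, is immediate; the left-orderability of $P$ (the rest of Proposition~\ref{P:perfect-lo}) is a separate matter handled via Theorem~\ref{T:combination-amalgam} and is not part of this lemma.
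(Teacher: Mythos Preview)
Your approach---compute the abelianisation of $P$ as $\mathbb{Z}^4/\langle \overline{h_i}-\overline{k_i}\rangle$---is exactly the paper's, but your execution contains a genuine arithmetic error that breaks the argument. You correctly compute at first that the $a$-exponent of $h_1$ is $-1+3+(-1)=1$, but two lines later you write the relation $\overline{h_1}=\overline{k_1}$ as $2a+\beta_1 b=c+\beta_1 d$: here you have dropped the $v_1$ contribution on both sides. You also claim $\overline{h_2}=a$, whereas the $a$-exponent of $h_2$ is $-1+2+(-1)=0$. These slips lead you to a relation matrix with determinant $\pm 5$ (or $\pm 3$, depending on which of your two inconsistent computations one follows), and a non-unit determinant does \emph{not} give a trivial cokernel: it would only show $P^{\mathrm{ab}}$ is finite of that order, so $P$ would not be perfect.

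Once you keep both $w_i$ and $v_i$, the hypotheses (1) and (2) give immediately
\[
\overline{h_1}=a,\quad \overline{k_1}=0,\qquad \overline{h_2}=0,\quad \overline{k_2}=c,\qquad \overline{h_3}=b,\quad \overline{k_3}=0,\qquad \overline{h_4}=0,\quad \overline{k_4}=d,
\]
so the four relations $\overline{h_i}=\overline{k_i}$ read $a=0$, $c=0$, $b=0$, $d=0$ directly. The relation matrix is diagonal with entries $\pm 1$; no elimination or determinant computation is needed. This is precisely how the paper does it: it observes that in $h_1k_1^{-1}$ the exponent sum of $a$ is $1$ and of $b,c,d$ is $0$, so $a$ dies, and similarly for the other generators.
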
 
\begin{proof}
The sum of the exponents of $b,c$ and $d$ in the relation $h_1k_1^{-1}$ is zero, while the exponent sum of $a$ is $1$. This implies that the image of $a$ in the abelianisation of $P$ is trivial. Similarly, the image of the generators $c,b$ and $d$ is trivial in the abelianisation of $P$. Thus the abelianisation of $P$ is trivial, hence, $P$ is perfect. 
\end{proof}

To prove that $P$ is left-orderable we let $\leq_1$ be the left-order on $G_1$ defined by $o_1=ab$, and let $\mathcal{L}_1$ be its normal closure. Analogously, we let $\leq_2$ be the left-order on $G_2$ defined by $o_2=cd$, and let $\mathcal{L}_2$ be its normal closure. We want to apply Theorem \ref{T:combination-amalgam}. To this end we prove that $\varphi$ is compatible.  
 
We first fix some notation. For all $1\leqslant i \leqslant 4$, 
 we let
  $$
  \hbox{$h_{4+i}=h_i^{-1}$ and $k_{4+i}=k_i^{-1}$.}
  $$
We now fix words $w_i$ and $v_i$, letters $x_i$ and $y_i$, and numbers $n_i$ and $m_i$  such that 
$$h_i=w_{i}(a,b)x_i^{n_i}v_{i}(a,b) \hbox{ and } k_i=w_{i}(c,d)y_i^{m_i}v_{i}(c,d).$$
To this point, we let $w_{4+i}=v_i^{-1}$ and $v_{4+i}=w_i^{-1}$. 
 We let $x_1=x_2=a$, $x_3=x_4=b$, $x_5=x_6=a^{-1}$, $x_7=x_8=b^{-1}$, and $y_1=y_2=c$, $y_3=y_4=d$, $y_5=y_6=c^{-1}$, $y_7=y_8=d^{-1}$. Moreover, we let $n_1=n_3=n_5=n_7=3$, $n_2=n_4=n_6=n_8=2$, $m_1=m_3=m_5=m_7=2$ and $m_2=m_4=m_6=m_8=3$.  

 Let us also denote by $t_i$ the last letter of $w_i$ and by $s_i$ the first letter of $v_i$. Finally, let 
 $\tau_1=\tau_{o_1}$ and $\tau_2=\tau_{o_2}$ be the functions used to define $\leq_1$ and $\leq_2$ respectively, see \eqref{E:definition-tau}.

\begin{rem}\label{R:perfect-1}
 As $w_i$ and $v_j$ are small cancellation words, $v_jw_i$ is either trivial or of length at least $12$. In particular, $v_jw_i$ starts with $s_j$ and ends with $t_i$. Moreover, by definition, 
 $$\tau_1(v_jw_i(a,b))=\tau_2(v_jw_i(c,d)).$$
\end{rem}
\begin{rem}\label{R:perfect-2} Let $u_{i1}(x,y)$ and $u_{i2}(x,y)$ be words such that $u_{i1}(a,b)x_i$ and $x_iu_{i2}(a,b)$ is reduced. Then Lemma \ref{L:tau} gives that 
\begin{align*}
\tau_1(u_{i1}(a,b)x_i^{n_i}u_{i2}(a,b)) & = \tau_2(u_{i1}(c,d)y_i^{m_i}u_{i2}(c,d)), \\
\tau_1(u_{i1}(a,b)x_i^{n_i-1}u_{i2}(a,b))& = \tau_2(u_{i1}(c,d)y_i^{m_i-1}u_{i2}(c,d)), \\
\tau_1(u_{i1}(a,b)x_iu_{i2}(a,b))& = \tau_2(u_{i1}(c,d)y_iu_{i2}(c,d)).
\end{align*} 
This is the case, in particular, if $u_{i1}=t_i$ and ${u_{i2}}=s_i$. 
\end{rem}

Remark \ref{R:perfect-2} and Lemma \ref{L:tau-2} imply that $\tau_1(h_i)=\tau_2(k_i)$, for all $i>0$. In particular, 
$1\leq_1 h_i$ if and only if $1\leq_2 k_i$. In more generality, we have the following.

\begin{lem}\label{L:perfect-LO-1} Let $g\in H$ and let $f=\varphi(g)$. Then 
$$1\leq_1^g h \hbox{ if and only if } 1\leq_2^f\varphi(h), \hbox{ for all $h\in H$.}$$
\end{lem}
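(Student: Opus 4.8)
The plan is to reduce the claim for a general $g\in H$ to the already-established fact that $\tau_1(h_i)=\tau_2(k_i)$ for all $i$, by understanding how $\tau_1$ behaves on products of the generators $h_i$ (and their inverses $h_{4+i}$) and matching it syllable-by-syllable with $\tau_2$ on the corresponding product of the $k_i$. First I would observe that $1\leq_1^g h$ means $1\leq_1 ghg^{-1}$, i.e. $\tau_1(ghg^{-1})+\omega(ghg^{-1})\geqslant 0$, and similarly $1\leq_2^f\varphi(h)$ means $\tau_2(\varphi(g)\varphi(h)\varphi(g)^{-1})+\omega(\varphi(g)\varphi(h)\varphi(g)^{-1})\geqslant 0$. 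Since $\varphi$ is a homomorphism, $\varphi(g)\varphi(h)\varphi(g)^{-1}=\varphi(ghg^{-1})$, so it suffices to prove that for every $u\in H$,
$$\tau_1(u)=\tau_2(\varphi(u)) \quad\text{and}\quad \omega(u)=\omega(\varphi(u)),$$
where on the left $u$ is written as a reduced word in $a,b$ and on the right $\varphi(u)$ as a reduced word in $c,d$.

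The second step is to write $u$ as a reduced word $h_{i_1}h_{i_2}\cdots h_{i_\ell}$ in the free generators $h_1,\dots,h_8$ of $H$ (using the convention $h_{4+i}=h_i^{-1}$), so that $\varphi(u)=k_{i_1}\cdots k_{i_\ell}$. Because the $w_j,v_j$ are small cancellation words, Proposition~\ref{P:reduced-free} guarantees the $h_j$ freely generate $H$, and moreover there is essentially no cancellation between consecutive factors: by Remark~\ref{R:perfect-1} the junction word $v_{i_r}w_{i_{r+1}}$ is either trivial or long, so the reduced form of $u$ is obtained from the concatenation of the (reduced) words $h_{i_r}=w_{i_r}x_{i_r}^{n_{i_r}}v_{i_r}$ by at most collapsing these junction segments. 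I would then apply Lemma~\ref{L:tau-2} repeatedly to split $\tau_1(u)$ into a sum of contributions, one for each factor $h_{i_r}$ in its ``context'' (i.e. preceded by $t_{i_{r-1}}$ or $s_{i_{r-1}}$ and followed by $t_{i_{r+1}}$ or $s_{i_{r+1}}$) plus junction terms $\tau_1(\cdot)$ at each seam; do the same for $\tau_2(\varphi(u))$. By Remark~\ref{R:perfect-2} each factor's contribution matches between the two sides (the exponents $n_{i_r}$ vs $m_{i_r}$ are irrelevant to $\tau$ up to the reductions Lemma~\ref{L:tau} allows), and by Remark~\ref{R:perfect-1} each junction term matches as well. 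Finally $\omega(u)=\omega(\varphi(u))$ because the last letter of $u$ is $t_{i_\ell}$ or $s_{i_\ell}$ (with the same sign) exactly when the last letter of $\varphi(u)$ is, so $h_i$ and $k_i$ begin and end with matching letters by construction (all words start and end with $x$ or $y$, and the exponent signs $n_i,m_i>0$ agree).

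The main obstacle is bookkeeping the cancellations at the seams carefully: when $v_{i_r}$ and $w_{i_{r+1}}$ are inverse prefixes/suffixes of each other there can be genuine cancellation across a junction, and one must check that the decomposition via Lemma~\ref{L:tau-2} still goes through — i.e. that the reduced word $u$ still ``contains'' enough of each $h_{i_r}$ (at least the $x_{i_r}^{n_{i_r}}$ core and the letters $t_{i_r},s_{i_r}$) for Remark~\ref{R:perfect-2} to apply, and that the surviving junction between the two cores is a word to which Remark~\ref{R:perfect-1}'s identity $\tau_1=\tau_2$ applies. This is where the small cancellation hypothesis (junctions are trivial or of length $\geqslant 12$, hence the cores $x_{i_r}^{n_{i_r}}$ and the boundary letters are never cancelled) does the work; once that is spelled out, the equality $\tau_1(u)=\tau_2(\varphi(u))$ follows by adding up matching terms, and the lemma is immediate. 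I would present this as: reduce to $\tau_1(u)=\tau_2(\varphi(u))$ and $\omega(u)=\omega(\varphi(u))$ for $u=ghg^{-1}\in H$; write $u$ in the free basis; telescope via Lemma~\ref{L:tau-2}; match terms via Remarks~\ref{R:perfect-1} and~\ref{R:perfect-2}.
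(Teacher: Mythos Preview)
Your proposal is correct and follows essentially the same route as the paper: reduce to showing $\tau_1(u)=\tau_2(\varphi(u))$ and $\omega(u)=\omega(\varphi(u))$ for every $u\in H$, expand $u$ in the free basis $h_1,\dots,h_8$, telescope $\tau$ via Lemma~\ref{L:tau-2}, and match the pieces using Remarks~\ref{R:perfect-1} and~\ref{R:perfect-2}. The paper makes the reduction one step more directly (since $g\in H$, conjugation by $g$ preserves $H$, so it suffices to treat an arbitrary $h\in H$ rather than naming $u=ghg^{-1}$), and writes out the telescoped sum explicitly; your concern about seam cancellation is exactly what Remark~\ref{R:perfect-1} handles, and in a reduced word $h_{i_1}\cdots h_{i_l}$ the junction $v_{i_j}w_{i_{j+1}}$ is never trivial (that would force $h_{i_{j+1}}=h_{i_j}^{-1}$), so it always has length $\geqslant 12$ and the decomposition goes through cleanly.
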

\begin{proof}
Let $h\in H$. As $g\in H$ it is sufficient to prove that $1\leq_1 h$ if and only if $1\leq_2 \varphi(h)$. 
We expand $h=h_{i1}h_{i_2}\cdots h_{il}$ as 
$$h=w_{i1}(a,b)x_{i_1}^{n_{i_1}}v_{i1}w_{i2}(a,b)x_{i_2}^{n_{i_2}}v_{i2}(a,b)\cdots w_{il}(a,b)x_{i_l}^{n_{i_l}}v_{il}(a,b).$$
Then 
$$\varphi(h)=w_{i_1}(c,d)y_{i_1}^{m_{i_1}}v_{i_1}(c,d)w_{i_2}(c,d)y_{i_2}^{m_{i_2}}v_{i_2}(c,d)\cdots w_{i_l}(c,d)y_{i_l}^{m_{i_l}}v_{i_l}(c,d).$$
By Lemma \ref{L:tau-2}, $\tau_1(h)$  is equal to 
\begin{align*}
\tau_1(w_{i_1}(a,b))+\sum_{j=1}^{l-1} \tau_1(v_{i_j}w_{i_{j+1}}(a,b))& + \tau_{1}(v_{i_l}(a,b))+ \sum_{j=1}^l \tau_1(t_{i_j}(a,b)x_{i_j}^{n_{i_j}}s_{i_j}(a,b)),
  \end{align*}
  and $\tau_2(\varphi(h))$ is equal to
  \begin{align*}
 \tau_2(w_{i_1}(c,d))+\sum_{j=1}^{l-1} \tau_2(v_{i_j}w_{i_{j+1}}(c,d)) & +\tau_2(v_{i_l}(c,d))+ \sum_{j=1}^l \tau_2(t_{i_j}(c,d)y_{i_j}^{m_{i_j}}s_{i_j}(c,d)).
\end{align*}
By Remarks \ref{R:perfect-1} and \ref{R:perfect-2} these two sums equal, hence, $\tau_1(h)=\tau_2(\varphi(h))$. We observe that $\omega(h)=\omega(\varphi(h))$, which yields the assertion of the lemma. 
\end{proof}

We complement the previous lemma by the following statements. 

\begin{lem}\label{L:perfect-LO-2} Let $h=h_{i_1}\cdots h_{i_l}$. Let $g_1,g_3\in G_1$ and $g_2,g_4\in G_2$ such that $g_1$,$g_3$, $g_2$  and $g_4$ are $p$-reduced with $h_{i_1}$, ${h_{i_l}}^{-1}$, $k_{i_1}$ and ${k_{i_l}}^{-1}$ respectively. If 
$$ \tau_1(g_1w_{i_1}(a,b))=\tau_2(g_2w_{i_1}(c,d)) \hbox{ and }  \tau_1(g_3v_{i_l}^{-1}(a,b))=\tau_2(g_4v_{i_l}^{-1}(c,d)),$$ 
then 
$$\tau_1(g_1hg_3^{-1})=\tau_2(g_2\varphi(h)g_4^{{-1}}).$$
\end{lem}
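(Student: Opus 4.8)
The plan is to reduce the computation of $\tau_1(g_1hg_3^{-1})$ and $\tau_2(g_2\varphi(h)g_4^{-1})$ to a telescoping sum of local contributions, each of which matches across the two sides by the hypotheses together with Remarks \ref{R:perfect-1} and \ref{R:perfect-2}. First I would write out $h=h_{i_1}\cdots h_{i_l}$ in the expanded form used in the proof of Lemma \ref{L:perfect-LO-1}, so that
$$g_1hg_3^{-1}=g_1\,w_{i_1}(a,b)x_{i_1}^{n_{i_1}}v_{i_1}(a,b)\,w_{i_2}(a,b)\cdots w_{i_l}(a,b)x_{i_l}^{n_{i_l}}v_{i_l}(a,b)\,g_3^{-1}.$$
The key point is that since $g_1$ is $p$-reduced with $h_{i_1}$ and $g_3$ is $p$-reduced with $h_{i_l}^{-1}$, the word $g_1hg_3^{-1}$ is reduced after at most a bounded amount of cancellation confined to the $g_1w_{i_1}$ and $v_{i_l}g_3^{-1}$ interfaces — crucially the cancellation does not reach the ``core'' blocks $x_{i_j}^{n_{i_j}}$ nor the internal junctions $v_{i_j}w_{i_{j+1}}$, because the $w_i,v_i$ are small cancellation words of length $>6$ and a piece has length $<\tfrac16$ of each relator. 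So I would first record, as a preliminary observation, exactly what the reduced form of $g_1hg_3^{-1}$ looks like: it is $g_1'\,w_{i_1}'(a,b)x_{i_1}^{n_{i_1}}v_{i_1}(a,b)w_{i_2}(a,b)\cdots$ with only the first block $g_1w_{i_1}$ replaced by its reduced product $g_1'w_{i_1}'$, and symmetrically at the tail.

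Next I would apply Lemma \ref{L:tau-2} repeatedly to split $\tau_1(g_1hg_3^{-1})$ along the junction letters into the sum
$$\tau_1(g_1w_{i_1}(a,b))+\sum_{j=1}^{l-1}\tau_1(v_{i_j}w_{i_{j+1}}(a,b))+\sum_{j=1}^{l}\tau_1(t_{i_j}x_{i_j}^{n_{i_j}}s_{i_j})+\tau_1(v_{i_l}(a,b)g_3^{-1}),$$
where $t_{i_j}$ is the last letter of $w_{i_j}$ and $s_{i_j}$ the first letter of $v_{i_j}$, and I would write the analogous expression for $\tau_2(g_2\varphi(h)g_4^{-1})$ in the letters $c,d$ with $y_{i_j}^{m_{i_j}}$ in place of $x_{i_j}^{n_{i_j}}$. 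This is legitimate precisely because, by the preliminary observation, the word $v_{i_l}(a,b)g_3^{-1}$ is reduced (its reduced form $v_{i_l}'g_3'$ still begins with $s_{i_l}$ after cancellation, as $g_3$ is $p$-reduced with $h_{i_l}^{-1}$), and similarly $g_1w_{i_1}$ ends in $t_{i_1}$; Lemma \ref{L:tau-2} then lets me peel off one junction at a time without worrying about further reductions, and I should note that $\tau$ is invariant under free reduction so I may compute with either the reduced word or the given concatenation. Then term by term: the internal junctions agree by Remark \ref{R:perfect-1}, the core blocks $\tau_1(t_{i_j}x_{i_j}^{n_{i_j}}s_{i_j})=\tau_2(t_{i_j}y_{i_j}^{m_{i_j}}s_{i_j})$ agree by Remark \ref{R:perfect-2} (this is exactly the case $u_{i1}=t_i$, $u_{i2}=s_i$ discussed there), and the two boundary terms $\tau_1(g_1w_{i_1}(a,b))=\tau_2(g_2w_{i_1}(c,d))$ and $\tau_1(g_3v_{i_l}^{-1}(a,b))=\tau_2(g_4v_{i_l}^{-1}(c,d))$ agree by the hypotheses of the lemma (the second rewritten using $\tau(u)=\tau(u^{-1})$ applied to $v_{i_l}(a,b)g_3^{-1}$). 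Summing, $\tau_1(g_1hg_3^{-1})=\tau_2(g_2\varphi(h)g_4^{-1})$, which is the assertion.

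I expect the main obstacle to be the bookkeeping at the two endpoints: I must argue carefully that the cancellation in $g_1w_{i_1}$ and in $v_{i_l}g_3^{-1}$ stays small enough (using the $p$-reducedness hypotheses and $|w_i|,|v_i|>6$ together with the piece bound $<\tfrac16\min$) that no cancellation ever touches the first core block $x_{i_1}^{n_{i_1}}$ or the last one $x_{i_l}^{n_{i_l}}$, so that the junction-splitting via Lemma \ref{L:tau-2} is applicable at every junction and the terms I compare really are the terms that appear. Once that structural fact is in hand the proof is a routine telescoping argument matching $8$ (or $2l+2$) groups of terms. One minor point to handle along the way is the degenerate case $l=1$, where there are no internal junctions and $g_1hg_3^{-1}=g_1w_{i_1}x_{i_1}^{n_{i_1}}v_{i_1}g_3^{-1}$; here the argument is the same but uses only the two boundary hypotheses and the single core identity.
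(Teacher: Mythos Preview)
Your approach is essentially the paper's: expand $g_1hg_3^{-1}$, apply Lemma~\ref{L:tau-2} to split $\tau_1$ into the two boundary terms $\tau_1(g_1w_{i_1}(a,b))$ and $\tau_1(v_{i_l}(a,b)g_3^{-1})$ plus a middle contribution, match the middle with its $\varphi$-image via Remarks~\ref{R:perfect-1} and~\ref{R:perfect-2} (the paper does this by pointing back to the proof of Lemma~\ref{L:perfect-LO-1}; you write out the full telescoping sum), and match the boundary terms using the hypotheses. The structural observation that $p$-reducedness confines all cancellation to the two ends, so that $g_1w_{i_1}$ still ends in $t_{i_1}$ and $v_{i_l}g_3^{-1}$ still begins with $s_{i_l}$, is exactly what the paper records in its first sentence.

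One concrete error: the identity ``$\tau(u)=\tau(u^{-1})$'' you invoke to pass from the stated hypothesis $\tau_1(g_3v_{i_l}^{-1}(a,b))=\tau_2(g_4v_{i_l}^{-1}(c,d))$ to the term $\tau_1(v_{i_l}(a,b)g_3^{-1})$ actually appearing in your decomposition is \emph{false}. For example, with $t=ab$ one has $\tau_t(ba^{-1})=2$ but $\tau_t(ab^{-1})=0$; what holds instead is $\tau_t(u)+\omega(u)=-(\tau_t(u^{-1})+\omega(u^{-1}))$, coming from $\leq_t$ being a total order. The paper's own proof glosses over exactly this passage, simply writing that the tail terms ``equal by assumption'', so this is a wrinkle in the lemma as stated rather than a defect specific to your argument; in the applications (e.g.\ Corollary~\ref{C:perfect-LO-2}) one in fact verifies $\tau_1(v_i(a,b)g^{-1})=\tau_2(v_i(c,d)g'^{-1})$ directly. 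You should drop the false identity and either treat $\tau_1(v_{i_l}(a,b)g_3^{-1})=\tau_2(v_{i_l}(c,d)g_4^{-1})$ as the working form of the second hypothesis, or supply the extra $\omega$-bookkeeping.
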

\begin{proof}
If $g$ is $p$-reduced with $h_i$, the words $gw_i(a,b)$  are of length at least $6$ and terminate with $t_i(a,b)$. If $g$ is $p$-reduced with $h_i^{-1}$, the words $v_i(a,b)g^{-1}$ are of length at least $6$ and start with $s_i(a,b)$. 
We expand $g_1hg_3^{-1}$ as 
$$g_1w_{i_1}(a,b)x_i^{n_i}v_i(a,b)h_{i_2}\cdots h_{i_{l-1}}w_{i_l}(a,b)x_{i_l}^{n_{i_l}}v_{i_l}(a,b)g_3^{-1}.$$
By Lemma \ref{L:tau-2},  $ \tau_1(g_1hg_3^{-1})$ is equal to
$$\tau_1(g_1w_{i_1}(a,b)) + \tau_1(t_{i_1}x_i^{n_i}v_i(a,b)h_{i_2}\cdots h_{i_{l-1}}w_{i_l}(a,b)x_{i_l}^{n_{i_l}}s_{i_l})+\tau_1(v_{i_l}(a,b)g_3^{-1}).$$
Analogously,  $\tau_2(g_2\varphi(h){g_4}^{-1})$ is equal to
$$\tau_2(g_2w_{i_1}(c,d)) + \tau_2(t_{i_1}(c,d)y_i^{m_i}v_i(c,d)k_{i_2}\cdots k_{i_{l-1}}w_{i_l}(c,d)y_{i_l}^{m_{i_l}}s_{i_l})+\tau_2(v_{i_l}(c,d){g_4}^{-1}).$$
As in the proof of Lemma \ref{L:perfect-LO-1}, we observe that the two middle terms in these sums equal. The other terms equal by assumption. This yields the claim. 
\end{proof}
If $g=g(a,b) \in G_1$, we denote by $g'$ the element $g(c,d)\in G_2$.
\begin{cor}\label{C:perfect-LO-2} If $g\in G_1$ is $p$-reduced with all $h_i$, then 
$$ 1\leq_1^g h \hbox{ if and only if } 1\leq_2^{g'} \varphi(h).$$
\end{cor}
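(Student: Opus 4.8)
The plan is to obtain the corollary from Lemma~\ref{L:perfect-LO-2} by taking $g_1=g_3=g$ and $g_2=g_4=g'$, and then to complement the equality of $\tau$-values it provides with a matching equality of $\omega$-values so as to pass to the orders $\leq_1$ and $\leq_2$.

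The underlying symmetry is the relabelling isomorphism $\sigma\colon G_1\to G_2$ sending $a\mapsto c$ and $b\mapsto d$. It is length-preserving, carries $o_1=ab$ to $o_2=cd$ and $w_i(a,b),v_i(a,b)$ to $w_i(c,d),v_i(c,d)$, and is an isomorphism of ordered groups $(G_1,\leq_1)\to(G_2,\leq_2)$; in particular $\sigma(g)=g'$, and $\tau_2(\sigma(u))=\tau_1(u)$, $\omega(\sigma(u))=\omega(u)$ for all $u\in G_1$. Two consequences follow. First, applying $\sigma$ to $gw_{i_1}(a,b)$ and to $gv_{i_l}^{-1}(a,b)$ gives the numerical hypotheses $\tau_1(gw_{i_1}(a,b))=\tau_2(g'w_{i_1}(c,d))$ and $\tau_1(gv_{i_l}^{-1}(a,b))=\tau_2(g'v_{i_l}^{-1}(c,d))$ of Lemma~\ref{L:perfect-LO-2}. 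Second, $g'$ is $p$-reduced with $k_{i_1}$ and with $k_{i_l}^{-1}$: the hypothesis that $g$ is $p$-reduced with all $h_i$ includes, via the convention $h_{4+i}=h_i^{-1}$, that $g$ is $p$-reduced with $h_{i_1}$ and with $h_{i_l}^{-1}$, so the defining cancellation is at most a piece, hence confined to the shared initial block $w_{i_1}$, respectively the shared terminal block $v_{i_l}$, where the two relator systems agree letter for letter under $\sigma$; thus the bound transports.

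Granting the hypotheses, Lemma~\ref{L:perfect-LO-2} gives $\tau_1(ghg^{-1})=\tau_2(g'\varphi(h)(g')^{-1})$ for every $h\in H$. If $h=1$ the asserted equivalence is immediate; otherwise write $h=h_{i_1}\cdots h_{i_l}$ reduced over the free generators of $H$. Since $g$ is $p$-reduced with $h_{i_l}^{-1}$, the estimate at the start of the proof of Lemma~\ref{L:perfect-LO-2} shows that the reduced form of $v_{i_l}(a,b)g^{-1}$ begins with the first letter of $v_{i_l}(a,b)$, so in $ghg^{-1}$ the cancellation with $g^{-1}$ stops inside the terminal block $v_{i_l}(a,b)$ of $gh$; running the same computation on the $G_2$-side, where the cancellation lengths agree under $\sigma$, shows that $ghg^{-1}$ and $g'\varphi(h)(g')^{-1}$ end in letters that are $\sigma$-images of one another, and since $\sigma$ preserves exponent signs, $\omega(ghg^{-1})=\omega(g'\varphi(h)(g')^{-1})$. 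Consequently $\tau_1(ghg^{-1})+\omega(ghg^{-1})\geqslant0$ if and only if $\tau_2(g'\varphi(h)(g')^{-1})+\omega(g'\varphi(h)(g')^{-1})\geqslant0$, which by the definitions of $\leq_1$, $\leq_2$ and of the conjugate orders is exactly the assertion that $1\leq_1^g h$ if and only if $1\leq_2^{g'}\varphi(h)$.

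I expect the main work to be the cancellation bookkeeping underlying the last two paragraphs: one must verify that, since $g$ is $p$-reduced with all the $h_i$ and their inverses --- not merely with those occurring in $h$ --- forming $ghg^{-1}$ erodes only the blocks $w_{i_1}$ and $v_{i_l}$, which is what makes both the transport of $p$-reducedness along $\sigma$ and the identification of the final letter legitimate. This relies on the elementary fact, already used in the proof of Lemma~\ref{L:perfect-LO-2}, that a $p$-reduced $g$ meets $w_i(a,b)$ only in a piece, and on the pieces being short relative to the blocks $w_i$ and $v_i$.
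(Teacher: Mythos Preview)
Your proposal is correct and follows essentially the same route as the paper: verify the two $\tau$-equalities needed as hypotheses of Lemma~\ref{L:perfect-LO-2} (the paper states these as direct observations, you derive them via the relabelling isomorphism $\sigma$), note that $\omega(ghg^{-1})=\omega(g'\varphi(h)g'^{-1})$, and conclude. Your explicit use of $\sigma$ and the accompanying check that $g'$ inherits $p$-reducedness with the $k_i$ make the argument a bit more transparent, but the structure is identical to the paper's three-line proof.
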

\begin{proof}
We observe that
\begin{align*}
 \tau_1(gw_i(a,b))=\tau_2(g'w_i(c,d)) \hbox{ and } \tau_1(v_i(a,b)g^{-1})=\tau_2(v_i(c,d){g'}^{-1}).
\end{align*}
Also $\omega(ghg^{-1})=\omega(g'\varphi(h){g'}^{-1})$. The assertion now follows from Lemma \ref{L:perfect-LO-2}. 
\end{proof}
 Let $p_i$ denote the piece that is maximal with the property that it is a suffix of $v_i$. 
\begin{lem}\label{L:perfect-LO-3} Let $g\in G_1$ such that $(g^{-1},h_i)< |h_i|-|p_i|$, for all $1\leq i\leq 8$. 
Then there is $f\in G_2$ such that 
$ 1\leq_1^g h \hbox{ if and only if } 1\leq_2^f \varphi(h), \hbox{ for all $h\in H$.}$
\end{lem}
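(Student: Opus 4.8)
The plan is to reduce the statement to Corollary~\ref{C:perfect-LO-2}, the case of a $p$-reduced element. The starting point is the observation that \emph{at most one} index $i_0\in\{1,\dots,8\}$ can fail the $p$-reducedness of $g$: each $h_i$ is a prefix of a reduced and cyclically reduced cyclic conjugate of one of the relators $h_mk_m^{-1}$ or of its inverse, so if $i\neq j$ were both ``bad'', i.e.\ if $(g^{-1},h_i)$ and $(g^{-1},h_j)$ both exceeded the length of the longest piece that is a prefix of $h_i$, respectively $h_j$, then taking $i$ with $(g^{-1},h_i)\geqslant(g^{-1},h_j)$, the common prefix of $g^{-1}$ and $h_j$ (a prefix of $g^{-1}$) would also be a prefix of $h_i$, hence a common prefix of $h_i$ and $h_j$, hence a piece, contradicting that $j$ is bad. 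If no bad index exists then $g$ is $p$-reduced with all $h_i$ and $f=g(c,d)$ works by Corollary~\ref{C:perfect-LO-2}; so I may assume there is exactly one bad index $i_0$. Put $c=(g^{-1},h_{i_0})$, write $g^{-1}=uB$ with $u$ the common prefix of $g^{-1}$ and $h_{i_0}$ of length $c$, and $h_{i_0}=u\sigma$. The hypothesis $c<|h_{i_0}|-|p_{i_0}|$ says precisely that $|\sigma|>|p_{i_0}|$, i.e.\ the suffix $\sigma$ of $h_{i_0}=w_{i_0}(a,b)x_{i_0}^{n_{i_0}}v_{i_0}(a,b)$ reaches strictly past the maximal piece-suffix $p_{i_0}$ of $v_{i_0}$.

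To build $f$, I mimic $\sigma$ on the $G_2$-side: let $\sigma'$ be the suffix of $k_{i_0}=w_{i_0}(c,d)y_{i_0}^{m_{i_0}}v_{i_0}(c,d)$ obtained from $\sigma$ by the substitution $a\mapsto c,\ b\mapsto d$ and, if $\sigma$ meets the exponent block, $x_{i_0}^{n_{i_0}}\mapsto y_{i_0}^{m_{i_0}}$ (using exponent $1$ in the boundary case $\sigma=v_{i_0}(a,b)$, so that $\sigma'=v_{i_0}(c,d)$); by Lemma~\ref{L:tau} the identity $\tau_2(\sigma')=\tau_1(\sigma)$ holds. Let $u'=k_{i_0}(\sigma')^{-1}$ be the complementary prefix of $k_{i_0}$ and set $f^{-1}=u'\,B(c,d)$. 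Since $g^{-1}$ diverges from $h_{i_0}$ at position $c+1$, and by the hypothesis this divergence occurs inside $w_{i_0}$, inside the exponent block, or strictly before $p_{i_0}$ inside $v_{i_0}$, one checks routinely that $f^{-1}=u'B(c,d)$ is reduced, that $u'$ is exactly the common prefix of $f^{-1}$ and $k_{i_0}$, and---by the same piece argument as above, transported to $G_2$---that $f$ is $p$-reduced with every $k_i$ except $k_{i_0}$.

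It then remains to check that $\tau_1(ghg^{-1})=\tau_2(f\varphi(h)f^{-1})$ and $\omega(ghg^{-1})=\omega(f\varphi(h)f^{-1})$ for every $h\in H$. Write $h=h_{k_1}\cdots h_{k_l}$ reduced over $\{h_1,\dots,h_4\}$; by small cancellation, $h$ and hence $ghg^{-1}$ reduce in $G_1$ with cancellation only at the factor junctions and, by the hypothesis, at the two outer ends. I would split $\tau_1(ghg^{-1})$ using Lemma~\ref{L:tau-2}. At an end where $k_1\neq i_0$ (respectively $k_l$ is not the index of $h_{i_0}^{-1}$), $g$ is $p$-reduced with $h_{k_1}$ (resp.\ $h_{k_l}^{-1}$) and $f$ with $\varphi(h_{k_1})$ (resp.\ $\varphi(h_{k_l})^{-1}$), so that end is governed by Lemma~\ref{L:perfect-LO-2} applied with $g_1=g_3=g$, $g_2=g_4=f$ and the boundary identities $\tau_1(gw_{k_1}(a,b))=\tau_2(fw_{k_1}(c,d))$ and $\tau_1(gv_{k_l}^{-1}(a,b))=\tau_2(fv_{k_l}^{-1}(c,d))$, which hold because $g$ and $f$ agree (after $a\mapsto c,\ b\mapsto d$) on the relevant initial segments up to the corresponding piece-cancellation, and the remaining tails differ only in the single exponent block $x_{i_0}^{n_{i_0}}$ versus $y_{i_0}^{m_{i_0}}$, across which $\tau$ is invariant by Lemma~\ref{L:tau}(1)--(2) and Lemma~\ref{L:tau-2}. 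At an end where $k_1=i_0$ (resp.\ $k_l$ is the index of $h_{i_0}^{-1}$), I use $g^{-1}=uB$, $f^{-1}=u'B(c,d)$, $h_{i_0}=u\sigma$, $k_{i_0}=u'\sigma'$ to rewrite the relevant factor of $ghg^{-1}$, respectively $f\varphi(h)f^{-1}$, so that the $\tau_1(B^{-1})$ and $\tau_1(B)$ (resp.\ the $\tau_2$) contributions cancel, the surviving factor is $\sigma$ (resp.\ $\sigma'$), and what remains---the $\tau$ of $\sigma$, of the cores $t_{k_j}x_{k_j}^{n_{k_j}}s_{k_j}$, of the junction bigrams, and of the meeting of $g$ with $\sigma$---matches the $G_2$-side term for term by Remarks~\ref{R:perfect-1} and \ref{R:perfect-2}, the exponent-invariance of Lemma~\ref{L:tau} (which absorbs the discrepancies $x_{i_0}^{n_{i_0}}$ versus $y_{i_0}^{m_{i_0}}$ and $\sigma$ versus $\sigma'$), and Lemma~\ref{L:tau-2}, exactly as in the proofs of Lemmas~\ref{L:perfect-LO-1} and \ref{L:perfect-LO-2}. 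Finally $\omega(ghg^{-1})=\omega(f\varphi(h)f^{-1})$ because the last letter of each is a letter of $g^{-1}$, respectively a letter of $B(c,d)$ (a suffix of $f^{-1}$), and these correspond under $a\mapsto c,\ b\mapsto d$.

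The main obstacle is the construction of $f$ together with the simultaneous verification for all $h$: one has to show that the single element $f$, with its corrected exponent block, produces the matching boundary contribution no matter which $h_{k_1}$ (resp.\ $h_{k_l}$) abuts $g$, and in particular that the surviving suffixes $\sigma$ and $\sigma'$---whose lengths genuinely differ since $n_{i_0}\neq m_{i_0}$---contribute equally to $\tau_1$, respectively $\tau_2$, once the common initial bigram has been peeled off. This is exactly where the invariance of $\tau$ under changing exponents of letter-runs is needed, and where the sharp bound $|p_{i_0}|$ in the hypothesis (rather than merely ``some piece'') is essential: it guarantees that $\sigma$ and $\sigma'$ extend past $p_{i_0}$, hence are never swallowed by the piece-cancellation at the junction with the following factor.
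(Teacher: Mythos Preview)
Your approach is the same as the paper's: isolate the unique bad index $i_0$, build $f$ by transporting $g$ to $G_2$ with the exponent block adjusted, and then verify the boundary identities required by Lemma~\ref{L:perfect-LO-2}. The paper makes the construction of $f$ precise via an explicit four-case split according to whether the cancellation $u$ stops inside $w_{i_0}$, after one power of $x_{i_0}$, after $n_{i_0}-1$ powers, or inside $v_{i_0}$; your uniform description of $\sigma'$ is correct in spirit but is underspecified exactly in the two middle cases (where $\sigma$ contains only part of the block $x_{i_0}^{n_{i_0}}$), since $n_{i_0}\neq m_{i_0}$ means the rule ``replace $x_{i_0}^{n_{i_0}}$ by $y_{i_0}^{m_{i_0}}$'' does not by itself determine which suffix of $k_{i_0}$ to take---the paper's choice is $\sigma'=y_{i_0}^{m_{i_0}-1}v_{i_0}(c,d)$, respectively $\sigma'=y_{i_0}\,v_{i_0}(c,d)$, and this is what makes $f^{-1}=u'B(c,d)$ reduced and the boundary $\tau$-identities go through.
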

\begin{proof}  Let $g\in G_1$ such that $(g^ {-1},h_i)< |h_i|-|p_i|$, for all $1\leqslant i\leqslant 8$. Note that this implies that the reduced form of $gh_i$ ends with $rp_i$, where $r$ is a non-trivial word.  In particular, $gh_i$ is $p$-reduced with all $h_j\not=h_i^{-1}$.

If $g$ is $p$-reduced with all $h_i$, we are done by Corollary \ref{C:perfect-LO-2}. If $g$ is not $p$-reduced with $h_i$, it has to be $p$-reduced with all other $h_j$, $j\not=i$. There are 4 cases to consider. 
We fix $i$ and let $j$ such that $v_j=w_i^{-1}$ and $w_j=v_i^{-1}$. 

{\emph{Case 1:} $g=g_0u_j(a,b)$ and $gh_i=g_0u_j(a,b)w_i(a,b)x_i^{n_i}v_i(a,b)$, where $u_j$ is a suffix of $v_j$ that is not a piece.}
Then let $f=g_0'u_j(c,d)$ so that $fh_i=g_0'u_j(c,d)w_i(c,d)y_i^{m_i}v_i(c,d)$.

{\emph{Case 2:} $g=g_0x_i^{-1}v_j(a,b)$ and $gh_i=g_0x_i^{n_i-1}v_i(a,b)$.} \\ 
Then let $f=g_0'y_i^{-1}v_j(c,d)$, so that $fk_i=g_0'y_i^{m_i-1}v_i(c,d)$.

{\emph{Case 3:} $g=g_0x_i^{-n_i+1}v_j(a,b)$ and $gh_i=g_0x_iv_i(a,b)$.}
\\ Then let $f=g_0'y_i^{-m_i+1}v_j(c,d)$, so that $fk_i=g_0'y_iv_i(c,d)$.

{\emph{Case 4:}  $g=g_0u_{j}(a,b)x_i^{-n_i}v_j(a,b)$ and $gh_i=g_0u_j(a,b)v_i(a,b)$, where $u_j$ is a suffix of $w_j$.} 
Then let $f=g_0'u_{j}(c,d)y_i^{-m_i}v_j(c,d)$, so that $fk_i=g_0'u_j(c,d)v_i(c,d)$. 

We now prove the assertion of the lemma in Case 4. The proof in Cases 1-3 is similar.

We use the notation of Case 4. Note that $u_jv_i$ is usually not in reduced form, however, it equals to a reduced word $r_{q_1}\ldots r_{q_t}p_i$, where $t\geqslant 1$. Note that the reduced form of $u_jv_iw_i(a,b)$ starts with $r_{q_1}$ and ends with $t_i$. We first prove that 
$$\tau_1(gh_ig^{-1})=\tau_2(fk_if^{-1}).$$
Indeed, $gh_ig^{-1} = g_0u_jv_iw_i(a,b)x_i^{n_i}u_j^{-1}(a,b)g_0^{-1}$.
 Thus $\tau_1(gh_ig^{-1})$ equals
$$\tau_1(g_0r_{q_1})+\tau_1(u_jv_iw_i(a,b)) + \tau_1(t_ix_i^{n_i}s_i) + \tau_1(u_j^{-1}(a,b)g_0^{-1}).$$
Similarly, 
$\tau_2(fk_if^{-1})=\tau_2(g_0'r_{q_1}')+\tau_2(u_jv_iw_i(c,d)) + \tau_2(t_i'y_i^{m_i}s_i') + \tau_2(v_i(c,d)g_0'^{-1}).$
We now readily observe that these two sums equal, see Remark \ref{R:perfect-2}.

Next let $h=h_{i_1}\ldots h_{i_l}$, for $n>1$ or $n=1$ and $h_{i_1}\not=h_i^{\pm 1}$. As $g$ is $p$-reduced with all $h_s\not = h_i$, and $gh_i$ is $p$-reduced with all $h_s\not=h_i^{-1}$, $1\leqslant s\leqslant 8$, it is sufficient to verify the assumptions of Lemma \ref{L:perfect-LO-2}. To do so, one needs to verify that 
$$\tau_1({g}w_s(a,b))=\tau_2(fw_s(c,d)) \hbox{ and } \tau_1(gh_iw_s(a,b))=\tau_2(fk_iw_s(c,d)).$$
This is analogous to the above arguments to prove that $\tau_1(gh_ig^{-1})=\tau_2(fk_if^{-1})$.
Thus, by Lemma \ref{L:perfect-LO-2}, 
$\tau_1(ghg^{-1})=\tau_2(f\varphi(h)f^{-1}).$ 

Finally, we note that $\omega(ghg^{-1})=\omega(f\varphi(h)f^{-1})$ by definition. This completes the proof of the lemma.
\end{proof}

\begin{lem}\label{L:perfect-LO-4} Let $g\in G_1$ such that $(g^{-1},h_i)\geqslant |h_i|-|p_i|$, for some $1\leq i\leq 8$. 
Then there is $f\in G_2$ such that 
$ 1\leq_1^g h \hbox{ if and only if } 1\leq_2^f \varphi(h), \hbox{ for all $h\in H$.}$
\end{lem}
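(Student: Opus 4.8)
The plan is to prove the lemma by induction on the word length $|g|$, with Lemma~\ref{L:perfect-LO-3} serving as the base of the recursion. Observe first that the hypotheses of Lemma~\ref{L:perfect-LO-3} and of the present lemma are complementary, so the two statements together will produce, for every $g\in G_1$, an element $f\in G_2$ with $1\leqslant_1^{g}h$ if and only if $1\leqslant_2^{f}\varphi(h)$ for all $h\in H$ --- exactly the input needed for Theorem~\ref{T:combination-amalgam}. The case $g=1$ is immediate with $f=1$. The idea for the inductive step is to peel a copy of $h_i^{-1}$ off the end of $g$, thereby strictly shortening it, and to compensate on the $G_2$-side by multiplying the conjugating element by $k_i^{-1}$, using that $\varphi$ is a homomorphism defined on the subgroup $H$.

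Concretely, given $1\leqslant i\leqslant 8$ with $(g^{-1},h_i)\geqslant |h_i|-|p_i|$, I would set $g'=gh_i$. The first step is the length estimate $|g'|<|g|$: since $p_i$ is a piece and a suffix of $v_i$, the small cancellation hypothesis gives $|p_i|<\tfrac16|v_i|<\tfrac16|h_i|$, hence
\[
|g'|=|g|+|h_i|-2(g^{-1},h_i)\leqslant |g|-|h_i|+2|p_i|<|g|.
\]
Thus $g'$ is strictly shorter than $g$, so by the inductive hypothesis --- via Lemma~\ref{L:perfect-LO-3} if $(g'^{-1},h_j)<|h_j|-|p_j|$ for all $j$, and via the present lemma otherwise --- there is $f'\in G_2$ with $1\leqslant_1^{g'}h\iff 1\leqslant_2^{f'}\varphi(h)$ for all $h\in H$. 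The second step is to verify that $f:=f'k_i^{-1}$ works for $g$. This is routine bookkeeping with conjugate orders: from $g=g'h_i^{-1}$ one gets, for $h\in H$, that $1\leqslant_1^{g}h$ if and only if $1\leqslant_1^{g'}(h_i^{-1}hh_i)$; as $h_i\in H$ we have $h_i^{-1}hh_i\in H$ and $\varphi(h_i^{-1}hh_i)=k_i^{-1}\varphi(h)k_i$ since $\varphi(h_i)=k_i$; applying the property of $f'$ and unravelling once more gives $1\leqslant_1^{g'}(h_i^{-1}hh_i)\iff 1\leqslant_2^{f'}(k_i^{-1}\varphi(h)k_i)\iff 1\leqslant_2^{f}\varphi(h)$, which is the assertion.

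I do not expect a genuine obstacle. The length bound rests only on the elementary fact that a piece which is a suffix of $v_i$ is short relative to $|v_i|$, and the conjugation identities are pure group theory; the one thing to keep straight is the well-foundedness of the recursion --- each application of the reduction strictly decreases $|g|$, so it terminates at $g=1$ --- together with the index conventions ($h_{4+i}=h_i^{-1}$, and the $p_i$, $v_i$ for $1\leqslant i\leqslant 8$) carried over from the preceding lemmas.
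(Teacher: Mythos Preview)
Your proof is correct and follows essentially the same approach as the paper. Both arguments peel copies of $h_i$ off the right of $g$ until the hypothesis of Lemma~\ref{L:perfect-LO-3} is met, then transfer the resulting $f'$ back by multiplying by the corresponding $k_i^{-1}$'s; you phrase this as induction on $|g|$, the paper as an explicit recursive construction of $g_0$ with $g=g_0h_0$ for some $h_0\in H$, but the content is identical. Your version is in fact slightly more explicit, since you supply the length estimate $|gh_i|\leqslant |g|-|h_i|+2|p_i|<|g|$ that justifies termination, whereas the paper only says ``this process stops because the length of $g$ is finite.''
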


\begin{proof} Let $h_i$ and  $g\in G_1$ such that $(g^{-1},h_i)\geqslant |h_i|-|p_i|$. Below, we recursively define $g_0\in G_1$  such that $g=g_0h_0$, for some $h_0\in H$, and such that $(g_0^{-1},h_j)< |h_j| - |p_j|$, for all $h_j$, $1\leqslant j \leqslant 8$. Note that  $1\leq^g h$ if and only if $1 \leq^{g_0} h_0hh_0^{-1}$. As $h_0Hh_0^{-1}=H$, the assertion then follows from Lemma \ref{L:perfect-LO-3}. 

We now construct $g_0$. 
Fix $g_1$ such that $g=g_1h_i^{-1}$. Now suppose that $g_i$ is given. If $(g_i^{-1},h_{j_i})\geqslant |h_{j_i}| - |p_{j_i}|$, for some $1\leqslant j_i\leqslant 8$, then we let $g_{i+1}$ such that $g_i=g_{i+1}h_{j_{i}}^{-1}$ and $g=g_{i+1}h_{j_i}^{-1}h_{j_{i-1}}^{-1}\cdots h_i^{-1}$.
Otherwise, we let $g_0=g_i$. This process stops because the length of $g$ is finite.  
\end{proof}

\begin{proof}[Proof of Proposition \ref{P:perfect-lo}]
The group $P$ is perfect by Lemma \ref{L:perfect}. By Lemmas \ref{L:perfect-LO-3} and \ref{L:perfect-LO-4}, $\varphi$ is compatible for  $(\mathcal{L}_1,\mathcal{L}_2)$. The proof that $\varphi^{-1}$ is compatible for $(\mathcal{L}_2,\mathcal{L}_1)$ is analogous. Thus $\varphi$ is compatible, and $P$ is left-orderable by Theorem \ref{T:combination-amalgam}.
\end{proof}

We now combine the presentation of $P$ with Example \ref{E:Bowditch}.

\begin{proof}[Proof of Propositions \ref{IP:qi-diversity} and  \ref{IP:perfect}] We use the notation of Example \ref{E:Bowditch}. For $i>20$ we let $h_{2i}=\beta_i(a,b)$ and $k_{2i}=\beta_i(c,d)$, and $h_{2i+1}=h_{2i}^{-1}$ and $k_{2i+1}=k_{2i}^{-1}$. The words $h_1,\ldots,h_4$ and $k_1,\ldots,k_4$ are chosen as above, where  $w_1$, $w_2$, $w_3$, $w_4$ and $v_1$, $v_2$, $v_3$ and $v_4$ are such that the words $h_1,\ldots,h_4$, $k_1,\ldots,k_4$ and all the words $h_{2i}$ and $k_{2i}$, for all $i>0$, are small cancellation words. We let $I\subset \mathcal{N}$, and denote by $H_I$ and $K_I$ the free subgroup freely generated by $h_1,h_2,h_3,h_4$ and the elements $h_{2i}$, for all $i\in I$, the free subgroup freely generated by $k_1,k_2,k_3,k_4$ and the elements $k_{2i}$, for all $i\in I$, respectively.  
Then we let 
$$
P_I=G_1*_{H_I}G_2.
$$
The proof of Lemma \ref{L:perfect} implies that $P_I$ is perfect. The proof that $P_I$ is left-ordered is as follows: for every $i>8$, we split $h_i=w_iv_i$ as a product, where $w_i$ and $v_i$ are words such that $|v_i|-1\leqslant |w_i|\leqslant |v_i|+1$. We also let $x_i=y_i$ be the empty word, and $n_i=m_i=1$, so that $h_i=w_i(a,b)x_i^{n_i}v_i(a,b)$ and $k_i=w_i(c,d)y_i^{m_i}v_i(c,d)$. Using this notation, we note that Lemmas \ref{L:perfect-LO-1}, \ref{L:perfect-LO-2} and Corollary \ref{C:perfect-LO-2} hold for $P_I$. In fact, the proofs are without any change.  

Now let $g\in G_1$ such that $(g^{-1},h_i)< |h_i|-|p_i|$, for all $i\geqslant 1$. Then there is $f\in G_2$ such that 
$ 1\leq_1^g h \hbox{ if and only if } 1\leq_2^f \varphi(h), \hbox{ for all $h\in H$,}$ cf. Lemma \ref{L:perfect-LO-3}.

Indeed, if $g$ is $p$-reduced with all $h_i$, $i\geqslant 1$, the claim is by Corollary \ref{C:perfect-LO-2}. If $g$ is not $p$-reduced with $h_i$, for some $i\geqslant 1$, then $g$ is $p$-reduced with all other $h_j$, $j\not=i$. Thus, there are two cases: if $g$ is not $p$-reduced with some $h_i$, for $1\leqslant i \leqslant 8$, we choose $f$ as given by Lemma \ref{L:perfect-LO-3}. With the above notation, the proof of this lemma then applies. We thus conclude the claim in this case. Otherwise $g$ is not $p$-reduced with some $h_i$, $i>8$. Then we let $f=g'=g(c,d)$. With this choice of $f$ the definitions imply that $\tau_1(gh_ig^{-1})=\tau_2(fk_if^{-1})$, and moreover that $\tau_1({g}w_s(a,b))=\tau_2(fw_s(c,d))$ and $\tau_1(gh_iw_s(a,b))=\tau_2(fk_iw_s(c,d))$, for all $s\geqslant 1$. Lemma \ref{L:perfect-LO-2} then implies the claim. 

We conclude that Lemma \ref{L:perfect-LO-4} holds for all $i\geqslant 1$. Thus the groups $P_I$ are left-orderable for all $I\subset \mathcal N$ by Theorem \ref{T:combination-amalgam}. As the groups $P_I$ are small cancellation groups, we conclude, using  \cite{bowditch_continuously_1998}, see also \cite[Example 3.2]{minasyan_quasi_2021}, that there are continuum many such groups up to quasi-isometry. This yields Proposition \ref{IP:perfect}. The abelianisation of a locally indicable group is non-trivial, in fact, it always contains the infinite cyclic group. As perfect groups are the groups with a trivial abelianisation, we have proven Proposition \ref{IP:qi-diversity}.
\end{proof}

\begin{rem}\label{R:Cantor} Replacing the group $B_I$ by the group $P_I$ in the proof of Corollary \ref{C:Bowditch} yields continuum many quasi-isometry classes of small cancellation groups that are not locally indicable and whose space of left-orders is homeomorphic to a Cantor set, see Remark \ref{IR:Cantor}.
\end{rem}

\section{Rips construction}

In this section, we prove Theorem \ref{IT:Rips} and Proposition \ref{IP:not-locally-indicable}. 

\subsection{Rips construction via HNN-extensions}
Let $Q$ be a finitely generated group given by  
$$Q=\langle x_1,\ldots x_n \mid r_1,r_2,\ldots ,r_i,\ldots \rangle.$$
Let $F$ be  a free group given by 
$$F=<a_1,a_2,b_1,b_2,c,x_1,\ldots,x_n,d_1,d_2,d_3,y_1,\ldots y_n, e>.$$ Below we write $z_1=a_1$, $z_2=a_2$, $z_3=b_1$, $z_4=b_2$, $z_5=c$, $z_6=d_1$, $z_7=d_2$, $z_8=d_3$ and $z_9=e$. Moreover, $z_1'=b_1$, $z_2'=b_2$, $z_3'=a_1$, $z_4'=a_2$ and $z_i'=z_i$ for all $i>4$. Let  
\allowdisplaybreaks 
\begin{align}
h_{j_i}=w_{j_i}(a_1,a_2)\; x_i, & \quad k_{j_i}= w_{j_i}(b_1,b_2)\; x_i  \hbox{, where $j_i = 2i$}; \label{rips-relator-1}\\
h_{j_i}=w_{j_i}(a_1,a_2)\; x_i^{-1}, & \quad  k_{j_i}= w_{j_i}(b_1,b_2)\; x_i^{-1}  \hbox{, where $j_i = 2i+2n$}; \label{rips-relator-2}\\
h_{j_{i,l}}=w_{j_{i,l}}(a_1,a_2)x_i\; y_l \; x_i^{-1}v_{j_{i,l}}(a_1,a_2), & \quad  k_{j_{i,l}}=w_{j_{i,l}}(b_1,b_2)y_i\; d_2 \; y_i^{-1}v_{j_{i,l}}(b_1,b_2) \label{rips-relator-3}\\
& \hspace{1cm} \hbox{, where $j_{i,l} = 2i+4n+2(l-1)n$} \nonumber; \\
h_{j_{i,l}}=w_{j_{i,l}}(a_1,a_2)x_i^{-1}\; y_l \; x_iv_{j_{i,l}}(a_1,a_2), & \quad  k_{j_{i,l}}=w_{j_{i,l}}(b_1,b_2)y_i^ {-1}\; d_2 \; y_iv_{j_{i,l}}(b_1,b_2) \label{rips-relator-4}\\
&  \hbox{, where $j_{i,l} = 2i+4n+2n^ 2+2(l-1)n$} \nonumber;\\
h_{j_{i,l}}=w_{j_{i,l}}(a_1,a_2)x_i\; z_l \; x_i^{-1}v_{j_{i,l}}(a_1,a_2), & \quad  k_{j_{i,l}}=w_{j_{i,l}}(b_1,b_2)y_i\; z_l' \; y_i^ {-1}v_{j_{i,l}}(b_1,b_2) \label{rips-relator-5}\\
&  \hbox{, where $j_{i,l} = 2i+4n+4n^ 2+2(l-1)n$} \nonumber; \\
h_{j_{i,l}}=w_{j_{i,l}}(a_1,a_2)x_i^{-1}\; z_l \; x_iv_{j_{i,l}}(a_1,a_2), & \quad  k_{j_{i,l}}=w_{j_{i,l}}(b_1,b_2)y_i^ {-1}\; z_l' \; y_iv_{j_{i,l}}(b_1,b_2) \label{rips-relator-6}\\
&  \hbox{, where $j_{i,l} = 2i+22n+4n^ 2+2(l-1)n$} \nonumber; \\
h_{i}=w_{i}(a_1,a_2)\; r_{i}(x_1,\ldots,x_n) \;v_{i}(a_1,a_2), & \quad  k_{i}=w_{i}(b_1,b_2)\; r_{i}(y_1,\ldots,y_n) \; v_{i}(b_1,b_2) \label{rips-relator-7}\\
& \hspace{2cm} \hbox{, where $i = 2i+40n+4n^2$}. \nonumber
\end{align}
We choose $w_i=w_i(x,y)$ and $v_i=v_i(x,y)$  so that the words $h_{2i}$ and the words $k_{2i}$ are small cancellation words.

Let $H=\langle h_2,\ldots, h_{2i},\ldots \rangle \leq F$ and $K=\langle k_2,\ldots, k_{2i},\ldots \rangle \leq F$. Note that $H$ and $K$ are free groups on the respective generators, see Proposition \ref{P:reduced-free}. We let $\varphi:H\to K$ be the isomorphism that sends $h_{2i}$ to $k_{2i}$, for all $i$. 
Let 
$$G=F*_{H}=\langle F,q \mid h=q \varphi(h) q^{-1},\hbox{ for all $h\in H$} \rangle$$
be the HNN-extension over $H$ with respect to $\varphi$. We note that $G$ is a small cancellation group. Moreover, if $Q$ is given by a finite presentation, then $G$ is finitely presented and hyperbolic. 

\begin{prop}\label{P:Rips}
The group $Q$ is a quotient of $G$ by an $(n+10)$-generated kernel. 
\end{prop}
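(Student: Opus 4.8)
The plan is to write down the quotient map $G\to Q$ by hand and then identify its kernel with an explicit finitely generated subgroup. Define $\pi\colon G\to Q$ on generators by $\pi(x_i)=x_i$ for $1\le i\le n$ and $\pi(g)=1$ for every $g$ among $a_1,a_2,b_1,b_2,c,d_1,d_2,d_3,e,y_1,\dots,y_n,q$. To see that $\pi$ is well defined, recall that $G=F*_H$ has, besides the relations of the free group $F$, only the stable-letter relations $h_{2m}=q\,\varphi(h_{2m})\,q^{-1}=q\,k_{2m}\,q^{-1}$. Since $\pi(q)=1$ it suffices to check $\pi(h_{2m})=\pi(k_{2m})$ for every $m$; but for each family \eqref{rips-relator-1}--\eqref{rips-relator-7} one has $w_\bullet(1,1)=v_\bullet(1,1)=1$, $r_i(1,\dots,1)=1$, and $r_i(x_1,\dots,x_n)=1$ in $Q$, so $\pi(h_m)$ and $\pi(k_m)$ both equal $x_i$, $x_i^{-1}$, $1$, or $1$ according to the family. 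The map $\pi$ is onto because $x_1,\dots,x_n$ generate $Q$, so we get the short exact sequence. Put
$$ M:=\langle\, a_1,a_2,b_1,b_2,c,d_1,d_2,d_3,e,\ q,\ y_1,\dots,y_n\,\rangle\le G, $$
an $(n+10)$-generated subgroup of $N=\ker\pi$. The whole point is to prove $N=M$.

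Next I would show $M\trianglelefteq G$. As $M$ contains every generator of $G$ except $x_1,\dots,x_n$, it is enough to verify $x_i^{\pm1}\,g\,x_i^{\mp1}\in M$ for each $i$ and each generator $g$ of $M$, and this is exactly what the seven relator families are engineered to give once one solves the relation $h_m=q k_m q^{-1}$ for its distinguished central letter. From \eqref{rips-relator-1} and \eqref{rips-relator-2} one obtains $x_i q x_i^{-1}=w_{2i}(a_1,a_2)^{-1}\,q\,w_{2i}(b_1,b_2)$ and an analogous formula for $x_i^{-1}q x_i$, whose right-hand sides lie in $\langle a_1,a_2\rangle\cdot\langle q\rangle\cdot\langle b_1,b_2\rangle\subseteq M$. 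From \eqref{rips-relator-3} and \eqref{rips-relator-4} one solves for $x_i^{\pm1}y_l x_i^{\mp1}$ and reads off an element of $M$ (built from $w_\bullet,v_\bullet$, $q$, $y_i$ and $d_2$). From \eqref{rips-relator-5} and \eqref{rips-relator-6}, letting $z_l$ run through $a_1,a_2,b_1,b_2,c,d_1,d_2,d_3,e$, one solves for $x_i^{\pm1}z_l x_i^{\mp1}$ and obtains an element built from $w_\bullet,v_\bullet$, $q$, $y_i$ and $z_l'$, using that $z_l'$ is again one of $a_1,a_2,b_1,b_2,c,d_1,d_2,d_3,e\in M$. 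Hence $x_i M x_i^{-1}=M$ for every $i$, and together with $gMg^{-1}=M$ for $g\in M$ this proves $M\trianglelefteq G$.

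Finally I would identify $G/M$ with $Q$. The quotient $G/M$ is generated by the images $\bar x_1,\dots,\bar x_n$, and solving relation \eqref{rips-relator-7} for $r_i$ gives
$$ r_i(x_1,\dots,x_n)=w_i(a_1,a_2)^{-1}\, q\, w_i(b_1,b_2)\, r_i(y_1,\dots,y_n)\, v_i(b_1,b_2)\, q^{-1}\, v_i(a_1,a_2)^{-1}\in M, $$
where the key point is $r_i(y_1,\dots,y_n)\in\langle y_1,\dots,y_n\rangle\subseteq M$. Thus every defining relator of $Q$ holds in $G/M$, so there is a surjection $Q\twoheadrightarrow G/M$; composed with the surjection $G/M\twoheadrightarrow Q$ induced by $\pi$ (which exists since $M\le N$), it is the identity on the generators $x_i$, hence both maps are isomorphisms. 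Therefore $N=\ker\pi=M$ is $(n+10)$-generated, which is the assertion.

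I do not expect a genuine obstacle in any of the three steps: each is a short manipulation of one relation matched to the conjugate it is meant to control, and the elaborate decorations $w_\bullet,v_\bullet$ together with the auxiliary generators $c,d_1,d_2,d_3,e$ are present precisely so that these manipulations deposit everything except a single conjugate (or a single relator $r_i$) into $M$. The one fact that genuinely relies on the small cancellation hypothesis — and which is already available via Proposition \ref{P:reduced-free} — is that the words $h_{2m}$, respectively $k_{2m}$, freely generate $H$, respectively $K$; this is what makes $\varphi$ a well-defined isomorphism and guarantees that $G$ has no relations beyond \eqref{rips-relator-1}--\eqref{rips-relator-7}, so that solving each relation for one letter is legitimate.
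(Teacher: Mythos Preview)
Your proof is correct and follows the same approach as the paper's: you define the same $(n+10)$-generated subgroup, verify its normality by solving each relator family \eqref{rips-relator-1}--\eqref{rips-relator-6} for the relevant conjugate $x_i^{\pm1}gx_i^{\mp1}$, and identify the quotient with $Q$ via \eqref{rips-relator-7}. The paper's own proof compresses all of this into two sentences, so your version is simply a fuller elaboration of the same argument.
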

\begin{proof} The relators $h_i=qk_iq^{-1}$ where $1\leqslant i\leqslant 40n+4n^ 2$ (i.e. $h_i$ and $k_i$ from (5)-(10)) imply that the subgroup $N=\langle q,a_1,a_2,b_1,b_2,c,d_1,d_2,d_3,y_1,\ldots y_n, e\rangle $ is normal in $G$. The remaining relators (coming from (11)) show that the map defined by sending $x_i$ to $x_i$, for all $1\leqslant i \leqslant n$, and all other generators to the identity is a projection onto $Q$ with kernel $N$. 
\end{proof}

\subsection{Construction of left-orders}

In this section, we prove that $G$ is left-orderable. To this end we first define two explicit left-orders on $F$. We let 
\begin{align*}
 o_1& = a_1a_2\; b_1b_2\; c \; x_1x_2\cdots x_n \; d_1d_2d_3 \; y_1y_2\cdots y_n \; e, \\
  o_2& = b_1b_2\; a_1a_2\; c \; y_1y_2\cdots y_n \; d_1d_2d_3 \; x_1x_2\cdots x_n \; e
\end{align*}
 and denote the left-order defined by $o_1$ by $\leq_1$, and the left-order defined by $o_2$ by $\leq_2$ respectively, see Theorem \ref{T:left-order-free-group}. 
 Note that in $o_1$ and $o_2$ the blocks $a_1a_2$ and $b_1b_2$ are swapped, the blocks $ x_1x_2\cdots x_n$ and $ y_1y_2\cdots y_n$ are swapped, while the position of all other letters is fixed. Let $\phi:F\to F$ be the map defined by a permutation of the generators such that $\phi(o_1)=o_2$. If $g$ is a word in $F$, we let $g'=\phi(g)$.  For instance, $a_1'=b_1$, $x_i'=y_i$, and $c'=c$. 
 
  If it is not necessary to distinguish $\leq_1$ and $\leq_2$ we write $\leq$ to denote any of the two orders, and denote by $\leq'$ the other one. For instance $a_1\leq x_1$ is a true statement, as $a_1\leq_1 x_1$  and $a_1\leq _2 x_1$. Also, if, for example,  $x_1<y_1$, this implies that $\leq=\leq_1$. 
  
 Similarly, we let $\tau_1=\tau_{o_1}$ and $\tau_2=\tau_{o_2}$, see \eqref{E:definition-tau}, and write $\tau$ to denote any of the two functions, and denote by $\tau'$ the other one.
 
 Let $ A=\{a_1,a_2,b_1,b_2\}$, $X=\{x_1,\ldots,x_n\}$ and $Y=\{y_1,\ldots,y_n\}$. If it is not important to which of the letters we refer, we often write $A$ to represent a letter in ${A}$, $X$ for a letter in $X$ and $Y$ for a letter in $Y$.

\begin{rem}\label{R:rem-rips-1} We let $\varepsilon_1,\varepsilon_2,\varepsilon_3,\varepsilon_4\in \{\pm1\}$ and note that 
\begin{align}
& \tau(A^{\varepsilon_1}X^{\varepsilon_2}A^{\varepsilon_3})=\tau'({A}^{\varepsilon_1}X^{\varepsilon_2}{A}^{\varepsilon_3}), \\
& \tau(A^{\varepsilon_1}x_{i_1}^{\varepsilon_2}x_{i_2}^{\varepsilon_3}A^{\varepsilon_4})=\tau'({A}^{\varepsilon_1}x_{i_1}^{\varepsilon_2}x_{i_2}^{\varepsilon_3}A^{\varepsilon_4}), \hbox{ for all $x_{i_1}$, $x_{i_2}$  in $X$},\\
& \tau(Y^{\delta}X^{\varepsilon})=\tau'(d_2^{\delta}Y^{\varepsilon}), \label{E:rem-rips-1-4} \\
& \tau(A^{\varepsilon_1}X^{\varepsilon_2}Y^{\varepsilon_3}X^{-\varepsilon_2}A^{\varepsilon_4})=\tau'({A}^{\varepsilon_1}Y^{\varepsilon_2}d_2^{\varepsilon_3}Y^{-\varepsilon_2}{A}^{\varepsilon_4}). \label{E:rem-rips-1-3} 
\end{align}
Let us justify \eqref{E:rem-rips-1-4} and \eqref{E:rem-rips-1-3}: we have that $A < Y$ and $A<X$. Moreover, $X<Y$ if and only if $Y<'d_2$, and $X>Y$ if and only if $Y>'d_2$.  This yields the equalities. The other equalities are straight-forward consequences of the choice of the orders. 
\end{rem}

We fix some more notation. We let $h_{2i-1}$ be the inverse of $h_{2i}$, and let $w_{2i-1}=v_{2i}^{-1}$ and $v_{2i-1}=w_{2i}^{-1}$. If it is not strictly necessary to keep track of the index we sometimes omit it. 

To prove that $G$ is left-orderable, we want to apply the combination theorem for HNN-extensions, Theorem \ref{T:combination-HNN}. We let $\mathcal L$ be the normal closure of $\{\leq_1,\leq_2\}$ in $F$. Then we need to prove that $\varphi$ is compatible. We first prove that $\varphi$ is compatible with respect to $(\mathcal{L},\mathcal{L})$, and come to the compatibility of $\varphi^{-1}$ with respect to $(\mathcal{L},\mathcal{L})$ later. 

Remark \ref{R:rem-rips-1} combined with Lemma \ref{L:tau-2} implies that $\tau(h_i)=\tau'(k_i)$, for all $i>0$. This immediately implies that $1< h_i$ if, and only if, $1<'k_i$. Remark \ref{R:rem-rips-1} is only needed to deal with \eqref{rips-relator-1}, \eqref{rips-relator-2}, \eqref{rips-relator-3} and \eqref{rips-relator-4}; all the other cases are by definition. 
Similarly, we apply Remark \ref{R:rem-rips-1} and Lemma \ref{L:tau-2} to obtain the following analogue of Lemma \ref{L:perfect-LO-1}. 

\begin{lem}\label{L:Rips-LO-1} Let $g\in H$ and let $f=\varphi(g)$. Then 
$$1\leq^g h \hbox{ if and only if } 1{\leq'}^f\varphi(h), \hbox{ for all $h\in H$.}$$
\end{lem}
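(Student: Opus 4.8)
The plan is to mirror the proof of Lemma \ref{L:perfect-LO-1} almost verbatim, but keeping track of the swap map $\phi$ and the two orders $\leq_1,\leq_2$ instead of the two separate free groups $G_1,G_2$. The point is that for $g\in H$ the relation $1\leq^g h$ depends only on $ghg^{-1}\in H$, so since $g\in H$ it suffices to prove the statement for $g=1$, i.e. that $1\leq h$ iff $1\leq' \varphi(h)$ for all $h\in H$. Concretely, I would fix $h=h_{i_1}h_{i_2}\cdots h_{i_l}$ in normal form with respect to the free generating set $\{h_{2i}\}$ (allowing the shorthand $h_{2i-1}=h_{2i}^{-1}$, $w_{2i-1}=v_{2i}^{-1}$, $v_{2i-1}=w_{2i}^{-1}$ introduced just above the statement), expand each $h_{i_j}$ according to its defining formula \eqref{rips-relator-1}--\eqref{rips-relator-7}, and write down the corresponding expansion of $\varphi(h)=k_{i_1}\cdots k_{i_l}$ using the $k$'s.

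Next I would apply Lemma \ref{L:tau-2} repeatedly to split $\tau(h)$ into a sum of contributions: the boundary terms $\tau(w_{i_1})$ and $\tau(v_{i_l})$ (or the appropriate first/last pieces), the ``junction'' terms $\tau(v_{i_j}w_{i_{j+1}})$ coming from consecutive syllables, and the ``core'' terms coming from the middle blocks $x_i^{\pm1}y_l x_i^{\mp1}$, $x_i^{\pm1}z_l x_i^{\mp1}$, $x_i^{\pm1}$, or $r_i(x_1,\dots,x_n)$ inserted between $w$ and $v$. For $\tau'(\varphi(h))$ one gets the completely analogous decomposition with $w_i,v_i$ in the $b$-variables, the junction terms $\tau'(v_{i_j}w_{i_{j+1}})$, and core terms built from the $b$-variable blocks $y_i^{\pm1}d_2 y_i^{\mp1}$, $y_i^{\pm1}z_l' y_i^{\mp1}$, $y_i^{\pm1}$, or $r_i(y_1,\dots,y_n)$. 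The junction and $w,v$-boundary terms match because $w_i,v_i$ only involve the letters $a_1,a_2$ (resp. $b_1,b_2$) and $\phi$ simply swaps these two pairs, so $\tau$ of an $\{a_1,a_2\}$-word equals $\tau'$ of the corresponding $\{b_1,b_2\}$-word by the definitions of $o_1,o_2$; this is essentially Remark \ref{R:perfect-1}'s analogue in the present setting and is a straightforward consequence of the choice of orders. The core terms match by Remark \ref{R:rem-rips-1}: the identities \eqref{E:rem-rips-1-3} and \eqref{E:rem-rips-1-4} are exactly tailored to show $\tau(A^{\varepsilon_1}X^{\varepsilon_2}Y^{\varepsilon_3}X^{-\varepsilon_2}A^{\varepsilon_4})=\tau'(A^{\varepsilon_1}Y^{\varepsilon_2}d_2^{\varepsilon_3}Y^{-\varepsilon_2}A^{\varepsilon_4})$, and together with the first two displayed equalities of Remark \ref{R:rem-rips-1} (plus Lemma \ref{L:tau}) they handle the $x_i^{\pm1}z_l x_i^{\mp1}$, the $x_i^{\pm1}$ and the $r_i$-blocks; for the $r_i$-blocks one uses that $r_i(x_1,\dots,x_n)$ and $r_i(y_1,\dots,y_n)$ have the same combinatorial pattern and that $\phi$ sends each $x_i$ to $y_i$ while fixing the relevant surrounding $a$/$b$-letters (after the swap). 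Summing all contributions gives $\tau(h)=\tau'(\varphi(h))$, and since the last letter of $h$ and of $\varphi(h)$ correspond under $\phi$ we also get $\omega(h)=\omega(\varphi(h))$; hence $1\leq h$ iff $\tau(h)+\omega(h)\geq 0$ iff $\tau'(\varphi(h))+\omega(\varphi(h))\geq 0$ iff $1\leq'\varphi(h)$, and conjugating by $g\in H$ (equivalently $f=\varphi(g)\in K$) finishes the proof.

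The main obstacle I anticipate is purely bookkeeping: there are seven families of relators \eqref{rips-relator-1}--\eqref{rips-relator-7}, several of them indexed by one or two indices and carrying signs $\pm1$ on the conjugating letter, and one must be careful that in the expansion of $h$ the letters adjacent to each inserted block are indeed the expected $a$-letters (the last letter $t$ of $w$, the first letter $s$ of $v$) so that Lemma \ref{L:tau-2} and Remark \ref{R:rem-rips-1} apply cleanly at every junction; this is guaranteed by the small cancellation hypothesis (which forces $v_{i_j}w_{i_{j+1}}$ to be either trivial or long, starting with $s_{i_j}$ and ending with $t_{i_{j+1}}$, exactly as in Remark \ref{R:perfect-1}). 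No genuinely new idea is needed beyond what is already set up; the lemma is the direct transcription of Lemma \ref{L:perfect-LO-1} to the HNN situation, and I would state it as such, citing Remark \ref{R:rem-rips-1} and Lemma \ref{L:tau-2} for the two types of matching and noting $\omega(h)=\omega(\varphi(h))$ to conclude.
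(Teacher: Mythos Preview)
Your proposal is correct and is precisely the argument the paper has in mind: the paper's own proof is the single sentence ``The proof is analogous to the proof of Lemma~\ref{L:perfect-LO-1}, where Remark~\ref{R:rem-rips-1} is used in place of Remark~\ref{R:perfect-2}, and we do not repeat it,'' and you have unpacked exactly that analogy (reduction to $g=1$, decomposition via Lemma~\ref{L:tau-2}, matching junction/boundary terms by the definition of $o_1,o_2$, matching core terms via Remark~\ref{R:rem-rips-1}, and $\omega(h)=\omega(\varphi(h))$). Your observation that Remark~\ref{R:rem-rips-1} is only genuinely needed for the relator families \eqref{rips-relator-1}--\eqref{rips-relator-4} while the remaining families follow directly from the definitions also agrees with the paper's remark just before the lemma.
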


 The proof is analogous to the proof of Lemma \ref{L:perfect-LO-1}, where Remark \ref{R:rem-rips-1} is used in place of Remark \ref{R:perfect-2}, and we do not repeat it.  
 We now prove an analogue of Lemma \ref{L:perfect-LO-2}.
 We denote, for every piece $p_i$ that is a prefix  or suffix of $h_i$ by $\overline{p_i}$ the word obtained by replacing $a_1$ by $b_1$ and  $a_2$ by $b_2$, and vice versa, but where the letters $x_i$ are fixed. Note that such a piece $p_i$ never contains a letter of $Y$. 
 
\begin{lem}\label{L:Rips-LO-2} Let $h=h_{i_1}\cdots h_{i_l}$. Let $g_1,g_3$ and $g_2,g_4\in F$ such that $g_1$ and $h_{i_1}$ are  $p_{i_1}$-reduced, $g_2$ and $k_{i_1}$ are $\overline{p_{i_1}}$-reduced, $g_3$  and $h_{i_l}^{-1}$ are $p_{i_l}$-reduced and $g_4$ and  $k_{i_l}^{-1}$ are $\overline{p_{i_l}}$-reduced. If, for every $\varepsilon\in\{\pm 1\}$, $a\in A$ and $x\in X$, 
\begin{enumerate}
\item $ \tau(g_1p_{i_1}a^{\varepsilon})=\tau'(g_2{\overline{p_{i_1}}}{a'}^{\varepsilon})$ and $\tau(g_1x^{\varepsilon})=\tau'(g_2{x}^{\varepsilon})$, 
\item $ \tau(g_3p_{i_l}a^{\varepsilon})=\tau'(g_4{\overline{p_{i_l}}}{a'}^{\varepsilon}) $ { and }  $\tau(g_3x^{\varepsilon})=\tau'(g_4{x}^{\varepsilon})$, 
\end{enumerate}
 then 
$$\tau(g_1hg_3^{-1})=\tau'(g_2\varphi(h)g_4^{{-1}}).$$
\end{lem}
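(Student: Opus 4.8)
The plan is to imitate the proof of Lemma~\ref{L:perfect-LO-2} almost verbatim, replacing the elementary identities of Lemma~\ref{L:tau} with the identities of Remark~\ref{R:rem-rips-1}. First I would expand the reduced word $g_1hg_3^{-1}$ as
\[
g_1\,w_{i_1}(a_1,a_2)\,(\text{middle block of }h_{i_1})\,v_{i_1}(a_1,a_2)\,h_{i_2}\cdots h_{i_{l-1}}\,w_{i_l}(a_1,a_2)\,(\text{middle block of }h_{i_l})\,v_{i_l}(a_1,a_2)\,g_3^{-1},
\]
and similarly expand $g_2\varphi(h)g_4^{-1}$. Since $g_1$ and $h_{i_1}$ are $p_{i_1}$-reduced and $g_3$ and $h_{i_l}^{-1}$ are $p_{i_l}$-reduced, the only cancellation in $g_1hg_3^{-1}$ happens between $g_1$ and the prefix $p_{i_1}$ of $h_{i_1}$, and between $g_3$ and the suffix $p_{i_l}$ of $h_{i_l}^{-1}$ (equivalently the prefix $p_{i_l}$ of $h_{i_l}$); the corresponding statement for $g_2\varphi(h)g_4^{-1}$ uses the $\overline{p_{i_j}}$-reducedness hypotheses and the fact that $\varphi$ replaces each $w_{i_j}(a_1,a_2)$ by $w_{i_j}(b_1,b_2)$ and each $v_{i_j}(a_1,a_2)$ by $v_{i_j}(b_1,b_2)$ while keeping all $x$- and $y$-letters in place.

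Next I would apply Lemma~\ref{L:tau-2} repeatedly to split both $\tau(g_1hg_3^{-1})$ and $\tau'(g_2\varphi(h)g_4^{-1})$ into sums of $\tau$-values of short words: the "boundary" terms $\tau(g_1p_{i_1}\cdots)$ and $\tau(\cdots p_{i_l}g_3^{-1})$ (together with the first and last letters of the middle blocks, which lie in $A\cup X$), the "interior junction" terms $\tau(v_{i_j}w_{i_{j+1}})$ where consecutive syllables meet, and the terms coming from the middle blocks $x_i^{\pm1}z_l^{\pm1}x_i^{\mp1}$ or $x_i^{\pm1}y_l\,x_i^{\mp1}$ or $x_i^{\pm1}$ (i.e.\ exactly the patterns $A^{\varepsilon_1}X^{\varepsilon_2}Y^{\varepsilon_3}X^{-\varepsilon_2}A^{\varepsilon_4}$, $A^{\varepsilon_1}X^{\varepsilon_2}A^{\varepsilon_3}$, and so on handled in Remark~\ref{R:rem-rips-1}). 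The interior junction terms and the middle-block terms match their primed counterparts by Remark~\ref{R:rem-rips-1} — this is precisely the statement $\tau(h_i)=\tau'(k_i)$ argued in the paragraph preceding Lemma~\ref{L:Rips-LO-1}, applied to each $h_{i_j}$ with $1<j<l$ and to the amputated first and last syllables — while the boundary terms match by hypotheses (1) and (2) of the lemma. Since $\tau$ is additive along these decompositions, summing gives $\tau(g_1hg_3^{-1})=\tau'(g_2\varphi(h)g_4^{-1})$.

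\textbf{Main obstacle.} The routine-but-delicate point is the bookkeeping at the two ends: I must check that hypotheses (1) and (2) — stated for an arbitrary letter $a\in A$, $x\in X$ and sign $\varepsilon$ — actually cover every way the first/last middle block of $h$ can begin or end after the prefix/suffix piece $p_{i_1}$, $p_{i_l}$ is reached, including the cases where a middle block is a single power $x_i^{\pm1}$ so that $w_{i_1}$ or $v_{i_l}$ is short. Here one uses that by construction all of $w_i,v_i$ start and end with a letter of $A$, that the pieces $p_i$ contain no letter of $Y$ (so passing from $p_i$ to $\overline{p_i}$ is just the $a\leftrightarrow b$ swap and the relevant next letter is in $A\cup X$), and that Lemma~\ref{L:tau-2} lets one peel off the $\tau(t_1s_2)$-type correction terms so that only the two-letter junctions at the very ends depend on $g_1,g_2,g_3,g_4$. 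Everything else is a mechanical application of Lemma~\ref{L:tau-2} and Remark~\ref{R:rem-rips-1}, exactly parallel to Lemma~\ref{L:perfect-LO-2}, so I would state it as such and refer the reader to that earlier proof for the repeated computations.
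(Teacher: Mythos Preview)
Your proposal is correct and follows essentially the same approach as the paper: peel off the two boundary junctions using Lemma~\ref{L:tau-2} so that hypotheses (1) and (2) apply (distinguishing whether the letter just past the piece lies in $A$ or in $X$), and then verify that the remaining central word $p_{i_1}^{-1}h_{i_1}\cdots h_{i_l}p_{i_l}^{-1}$ and its $\varphi$-image have equal $\tau$-values via Remark~\ref{R:rem-rips-1} and Lemma~\ref{L:tau-2}, exactly as in Lemma~\ref{L:perfect-LO-2}. The paper's proof organises the peeling as two successive reductions (left end, then right end) rather than one simultaneous decomposition, but this is only a difference in presentation.
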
 

\begin{proof} Suppose that $p_{i_1}^{-1}h_{i_1}$ starts with $a\in A$. Then we expand 
$$\tau(g_1hg_3^{-1})=\tau(g_1p_{i_1}a)+\tau(p_{i_1}^{-1}h_{i_1}\cdots h_{i_l}g_2^{-1})$$
and 
$$\tau'(g_2\varphi(h)g_4^{-1})=\tau'(g_2\overline{p_{i_1}}a')+\tau'(\overline{p_{i_1}}^{-1}k_{i_1}\cdots k_{i_l}g_4^{-1}).$$
If $p_{i_1}$ is trivial and $h_{i_1}$ starts with $x\in X$, then we expand 
$$\tau(g_1hg_3^{-1})=\tau(g_1x)+\tau(h_{i_1}\cdots h_{i_l}g_2^{-1})$$
and 
$$\tau'(g_1\varphi(h)g_3^{-1})=\tau'(g_3x)+\tau'(k_{i_1}\cdots k_{i_l}g_4^{-1}).$$
By assumption, we are left with proving that 
$\tau(p_{i_1}^{-1}h_{i_1}\cdots h_{i_l}g_2^{-1})=\tau'(\overline{p_{i_1}}^{-1}k_{i_1}\cdots k_{i_l}g_4^{-1})$, respectively that 
$\tau(h_{i_1}\cdots h_{i_l}g_2^{-1})=\tau'(k_{i_1}\cdots k_{i_l}g_4^{-1})$. Further expanding the ends of the respective words, an analogous argument reduces the proof to showing that 
$$\tau(p_{i_1}^{-1}h_{i_1}\cdots h_{i_l}p_{i_l}^{-1})=\tau'(\overline{p_{i_1}}^{-1}k_{i_1}\cdots k_{i_l}\overline{p_{i_l}}^{-1}),$$
where we allow $p_{i_1}$ or $p_{i_l}$ to be the trivial word. This equality follows directly from Remark \ref{R:rem-rips-1} and Lemma \ref{L:tau-2}, just as in the proofs of Lemma \ref{L:Rips-LO-1} and Lemma  \ref{L:perfect-LO-2}.
\end{proof} 

\begin{lem} \label{L:Rips-LO-3} Let $g\in F$. If, for all $i>0$, there are pieces $p_i$ such that $g$ is $p_i$-reduced with $h_i$, then there is $f\in F$ such that for every $\varepsilon\in\{\pm 1\}$, $a\in A$ and $x\in X$, 
$$\hbox{$ \tau(gp_i^{-1}a^{\varepsilon})=\tau'(f{\overline{p_i}}^{-1}{a'}^{\varepsilon})$ and $\tau(gx^{\varepsilon})=\tau'(f{x}^{\varepsilon})$.}$$
\end{lem}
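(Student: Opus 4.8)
The plan is to choose $f$ by performing, on $g$, the analogue of the permutation $\phi$ of generators used to pass from $o_1$ to $o_2$; concretely, I would first reduce to the case where $g$ is itself in reduced form and then set $f$ to be the word obtained from $g$ by applying $\phi$ to the maximal suffix of $g$ that will \emph{not} be cancelled by any $h_i$, and keeping the (short, piece-length) tails fixed. The point of the hypothesis ``$g$ is $p_i$-reduced with $h_i$ for every $i$'' is exactly that it pins down, uniformly in $i$, which suffix of $g$ can be cancelled when forming $gh_i$: by definition of $p_i$-reducedness, $(g^{-1},h_i)\le |p_i|$, so the cancellation in $gh_i$ never goes past the piece $p_i$. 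Since pieces never contain a letter of $Y$ (as recorded just before Lemma \ref{L:Rips-LO-2}), the only ambiguity in how $g$ interacts with the various $h_i$ lives in the $A$-letters and $X$-letters near the end of $g$, which is precisely the data that the two claimed identities control.

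The key steps, in order, are: (1) Write $g=g_0 s$, where $s$ is the maximal suffix of $g$ that is a piece (of some $h_i$ or, if $g$ is $p_i$-reduced for every $i$ simultaneously, the common such suffix), and $g_0$ is the remaining prefix; then $g_0$ ends in a letter that is either in $A$ or in $X$ (after absorbing any tail that is not a piece into $g_0$). (2) Define $f:=g_0' \, s$, i.e. apply $\phi$ to $g_0$ and leave $s$ untouched --- this is consistent because $s$ contains no $Y$ and $\phi$ fixes all letters outside $A$ and $X\cup Y$, and we keep the $X$-letters of $s$ fixed just as in the definition of $\overline{p_i}$. (3) Verify the two displayed identities. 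For the first, $gp_i^{-1}a^{\varepsilon}$ reduces (because $p_i$ is a suffix of $g$, or rather $\overline{p_i}$-type bookkeeping applies) to a word of the form $g_0 \, (\text{short word in } A\cup X)\, a^\varepsilon$, and applying $\phi$ to $g_0$ while using Remark \ref{R:rem-rips-1} --- in particular the equalities $\tau(A^{\varepsilon_1}X^{\varepsilon_2}A^{\varepsilon_3})=\tau'(A^{\varepsilon_1}X^{\varepsilon_2}A^{\varepsilon_3})$ and $\tau(A^{\varepsilon_1}x_{i_1}^{\varepsilon_2}x_{i_2}^{\varepsilon_3}A^{\varepsilon_4})=\tau'(\cdots)$ --- together with Lemma \ref{L:tau-2} to split off the boundary letter $t_1 s_2$ contributions, yields $\tau(gp_i^{-1}a^\varepsilon)=\tau'(f\overline{p_i}^{-1}(a')^\varepsilon)$. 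The identity $\tau(gx^\varepsilon)=\tau'(fx^\varepsilon)$ is the same computation with an $X$-letter appended instead of an $A$-letter, using that $\phi$ fixes $x$. (4) Finally check that $f\in F$ is well defined (it visibly is, being a word in the free generators) and that the construction does not depend on which $p_i$ we used, since the maximal common piece-suffix of $g$ is intrinsic to $g$.

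\textbf{Main obstacle.} The genuinely delicate point is step (1)--(2): making precise the claim that there is a \emph{single} decomposition $g=g_0 s$ that simultaneously witnesses $p_i$-reducedness for \emph{all} $i$, and then checking that applying $\phi$ only to $g_0$ produces an $f$ for which the identities hold for \emph{every} $i$ at once rather than one $i$ at a time. One has to argue, using that all the $h_i$ are built from small cancellation words $w_i,v_i$ together with short ``middle'' blocks involving $X$, $Y$, $Z$-letters, that distinct $h_i$ can only share a bounded common prefix with $g^{-1}$, and that this shared part is itself a piece and hence $Y$-free, so that the single swap $g_0\mapsto g_0'$ does the job uniformly. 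Once that combinatorial bookkeeping is set up, the verification of the $\tau$-identities is a routine application of Remark \ref{R:rem-rips-1} and Lemma \ref{L:tau-2}, exactly parallel to the proofs of Lemmas \ref{L:Rips-LO-1} and \ref{L:Rips-LO-2}.
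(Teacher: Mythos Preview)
Your plan has a genuine gap at step (1): the claim that ``$g_0$ ends in a letter that is either in $A$ or in $X$'' is false. The element $g$ is arbitrary in $F$, so its last letter can be any generator, in particular a letter in $Y\cup\{d_1,d_2,d_3\}$. In that situation every $p_i$ is trivial (no $h_i$ begins with such a letter), so your decomposition gives $s=1$ and $f=g'=\phi(g)$. But then the identity $\tau(gx^{\varepsilon})=\tau'(fx^{\varepsilon})$ fails: for example with $\leq=\leq_1$, $g$ ending in $y_2$ and $x=x_3$, one has $\tau_1(y_2x_3^{-1})=2$ (since $x_3<_1 y_2$) while $\tau_2(x_2x_3^{-1})=0$ (since $x_2<_2 x_3$). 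The point is that $\phi$ swaps $X$ with $Y$, but the right-hand side of the desired identity appends an \emph{unswapped} $x^{\varepsilon}$; Remark \ref{R:rem-rips-1} does not cover a junction of the form $t^{\delta}x^{\varepsilon}$ with $t\in Y\cup\{d_1,d_2,d_3\}$, and Lemma \ref{L:tau-2} alone cannot repair this.

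The paper's proof resolves exactly this by a case analysis on the last one or two letters of $g$. When $g$ ends in $t^{\delta}$ with $t\in Y\cup\{d_1,d_2,d_3\}$ (Case 3), or in $t^{\delta}x_i$ with $t\geqslant c$ (part of Case 2), the paper does \emph{not} take $f=g'$; instead it \emph{inserts} an auxiliary letter, setting $f=g'c^{\delta}$ or $f=g'e^{\delta}$ (respectively $f=g_1'{t'}^{\delta}c^{\delta}x_i$ or $f=g_1'{t'}^{\delta}e^{\delta}x_i$), chosen so that the buffer letter $c$ or $e$ sits on the correct side of both $A$ and $X$ in the order $\leq'$. Lemma \ref{L:tau} then shows the insertion is harmless on the $g'$ side while forcing the junction contribution to match. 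This buffering trick is the missing idea in your proposal; once it is in place, the verification is indeed the routine computation you describe.
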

\begin{proof}
Let $g$ be $p_i$-reduced with $h_i$, for all $i>0$. We need to consider several cases.

\emph{Case 1:} $g$ ends with a letter in $\mathcal{A}$, with $c$ or with $e$. Then let $f=g'$. The assertion holds by choice of the words and the orders.

\emph{Case 2:} $g=g_0x_i$, $p_i=x_i^{-1}$ and $gh_i^{-1}=g_0w_i^{-1}(a_1b_1)$. Then $p_j$ is trivial for all $j\not=i$. Let us denote by $t$ the last letter of $g_0$ so that $g=g_1t^{\delta}x_i$. 
 If $t<c$, we let $f=g_0'x_i$. Then the assertion holds by definition. 
Otherwise $t\geq c$. If $t <x_i$ we let $f=g_1'{t'}^{\delta}c^{\delta}x_i$. Then the assertion holds by Lemma \ref{L:tau}. 
Indeed, as $A<c\leq t$, 
$$\tau(gp_i^{-1}a^{\varepsilon})=\tau(g_1t^{\delta}a^{\varepsilon})=\tau(g_1t^{\delta}c^{\delta}a^{\varepsilon})=\tau'(g_1'{t'}^{\delta}c^{\delta}{a'}^{\varepsilon})=\tau'(f{\overline{p_i}}^{-1}{a'}^{\varepsilon}),$$
and, as $c<x_i$,
$$\tau(gx^{\varepsilon})=\tau(g_1t^{\delta}x_ix^{\varepsilon})=\tau(g_1t^{\delta}c^{\delta}x_ix^{\varepsilon})=\tau'(g_1'{t'}^{\delta}c^{\delta}x_ix^{\varepsilon})=\tau'(f{x}^{\varepsilon}).$$
Similarly, $\tau(ga^{\varepsilon})=\tau'(fa^{\varepsilon})$, hence,  $\tau(gp_j^{-1}a^{\varepsilon})=\tau'(f{\overline{p_j}}^{-1}{a'}^{\varepsilon})$, for all $j\not=i$. 

If $t>x_i$, we let $f=g_1'{t'}^{\delta}e^{\delta}x_i$. As $A<e$ and $e>x_i$, as before, the assertion holds by Lemma \ref{L:tau}. 
 If $t^{\delta}=x_i$, we let $f=g_1'y_ix_i=g_1't'^{\delta}x_i$. Then the assertion holds by definition.
 
\emph{Case 3:} $g=g_0t^{\delta}$, where $t\in Y \cup \{d_1,d_2,d_3\}$. Then $gh_i$ is reduced and $p_i$ trivial for all $i$. We also note that $t>A$. 
If $t<X$, then $\leq=\leq_2$ and we let $f=g_1'{t'}^{\delta}c^{\delta}$. As $c<X$ and $c>A$, the assertion follows by Lemma \ref{L:tau}. Indeed, 
$$
\tau(ga^{\varepsilon})=\tau(g_0t^{\delta}a^{\varepsilon})=\tau(g_0t^{\delta}c^{\delta}a^{\varepsilon})=\tau'(g_0'{t'}^{\delta}c^{\delta}{a'}^{\varepsilon})=\tau'(fa^{\varepsilon}), 
$$
 and
 $$
\tau(gx^{\varepsilon})=\tau(g_0t^{\delta}x^{\varepsilon})=\tau(g_0t^{\delta}c^{\delta}x^{\varepsilon})=\tau'(g_0'{t'}^{\delta}c^{\delta}{x}^{\varepsilon})=\tau'(fx^{\varepsilon}). 
$$ 

 Otherwise, $t>X$ and $\leq=\leq_1$. Then we let $f=g_1'{t'}^{\delta}e^{\delta}$. As $e>X$ and $e>A$, the assertion follows by Lemma \ref{L:tau} as before. 
 
\emph{Case 4:} $g$ ends with a letter $x_i$ but $|p_i|>1$. Then $g$ splits as $g=g_0a^{\delta}x_i$, for some $a\in A$. In this case we let $f=g_0'{a'}^{\delta}x_i$. The assertion then holds by definition.

This completes the proof of the lemma.      
\end{proof}

Lemma \ref{L:Rips-LO-3} has the following immediate consequence. 

\begin{cor} \label{C:Rips-LO-2} If $g\in F$ is $p$-reduced with $h_i$, for all $i>0$, then there is $f\in F$ such that
$$ 1\leq_1^g h \hbox{ if and only if } 1\leq_2^{f} \varphi(h).$$
\end{cor}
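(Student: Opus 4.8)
The plan is to imitate the proof of Corollary~\ref{C:perfect-LO-2} line by line, with Lemma~\ref{L:Rips-LO-3} in the role of the elementary observation made there and Lemma~\ref{L:Rips-LO-2} in the role of Lemma~\ref{L:perfect-LO-2}. Unwinding the definition of the conjugate order, the statement amounts to producing a single $f\in F$ with $\tau_1(ghg^{-1})=\tau_2(f\varphi(h)f^{-1})$ and $\omega(ghg^{-1})=\omega(f\varphi(h)f^{-1})$ for every $h\in H$: indeed then $1\leq_1^g h$ holds iff $\tau_1(ghg^{-1})+\omega(ghg^{-1})\geqslant 0$ iff $\tau_2(f\varphi(h)f^{-1})+\omega(f\varphi(h)f^{-1})\geqslant 0$ iff $1\leq_2^f\varphi(h)$. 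So the first thing I would do is apply Lemma~\ref{L:Rips-LO-3} to $g$ --- whose hypothesis is exactly that $g$ is $p$-reduced with every $h_i$ --- reading $\leq$ as $\leq_1$ and $\leq'$ as $\leq_2$; this hands me the desired $f$ together with the identities $\tau_1(gp_i^{-1}a^{\varepsilon})=\tau_2(f\overline{p_i}^{-1}{a'}^{\varepsilon})$ and $\tau_1(gx^{\varepsilon})=\tau_2(fx^{\varepsilon})$ for all $\varepsilon\in\{\pm1\}$, $a\in A$ and $x\in X$.

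Next, given an arbitrary $h=h_{i_1}\cdots h_{i_l}\in H$, I would invoke Lemma~\ref{L:Rips-LO-2} with $g_1=g_3=g$ and $g_2=g_4=f$. Its hypotheses are met: by the convention $h_{2i-1}=h_{2i}^{-1}$, the assumption that $g$ is $p$-reduced with every relator provides pieces making $g$ and $h_{i_1}$, as well as $g$ and $h_{i_l}^{-1}$, reduced in the required sense, while the explicit formula for $f$ coming out of the proof of Lemma~\ref{L:Rips-LO-3} shows that $f$ is correspondingly $\overline{p}$-reduced with $k_{i_1}$ and with $k_{i_l}^{-1}$; and conditions~(1)--(2) of Lemma~\ref{L:Rips-LO-2} are precisely the identities of the previous paragraph, specialised to the pieces $p_{i_1}$ and $p_{i_l}$. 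Lemma~\ref{L:Rips-LO-2} then delivers $\tau_1(ghg^{-1})=\tau_2(f\varphi(h)f^{-1})$. For the $\omega$'s I would argue as in Corollary~\ref{C:perfect-LO-2}: in every case of the proof of Lemma~\ref{L:Rips-LO-3} the word $f$ begins with the $\phi$-image of a prefix of $g$, and $\phi$ preserves exponent signs, so $g$ and $f$ start with letters of the same sign; since $g$ is $p$-reduced with all relators, the reduced forms of $ghg^{-1}$ and $f\varphi(h)f^{-1}$ terminate in the inverses of these first letters, whence $\omega(ghg^{-1})=\omega(f\varphi(h)f^{-1})$. Together with the $\tau$-equality this gives the claim.

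The only real work sits in the second paragraph, namely the index bookkeeping: checking that the piece $p_i$ attached to $h_i$ by Lemma~\ref{L:Rips-LO-3} is the one demanded by Lemma~\ref{L:Rips-LO-2} at the two ends of $h$ --- in particular for the inverse relators $h_{i_l}^{-1}$ and $k_{i_l}^{-1}$ --- and that the single $f$ produced by Lemma~\ref{L:Rips-LO-3} is simultaneously adapted to all of them. Since Lemma~\ref{L:Rips-LO-3} was already established relator-by-relator with a uniform output $f$, and since the conclusion of that lemma is phrased exactly so as to plug into Lemma~\ref{L:Rips-LO-2}, I expect this to go through without any new computation.
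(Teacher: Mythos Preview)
Your proposal is correct and follows the same approach as the paper: the paper states the corollary as an ``immediate consequence'' of Lemma~\ref{L:Rips-LO-3}, and you have simply made explicit the intended deduction, namely feeding the output of Lemma~\ref{L:Rips-LO-3} into Lemma~\ref{L:Rips-LO-2} with $g_1=g_3=g$, $g_2=g_4=f$, together with the routine check that $\omega$ agrees. There is nothing substantively different here; you have just unpacked what the paper leaves implicit.
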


Let $p_i$ denote the piece that is maximal with the property of being a suffix of $h_i$. 

\begin{lem}\label{L:Rips-LO-4} Let $g\in F$ such that $(g^{-1},h_i) < |h_i|-|p_i|$, for all $1\leqslant i$. 
Then there is $f\in F$ such that 
$ 1\leq^g h \hbox{ if and only if } 1\leq'^f \varphi(h), \hbox{ for all $h\in H$.}$
\end{lem}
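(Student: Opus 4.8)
The plan is to mirror the argument of Lemma \ref{L:perfect-LO-4}: reduce the general case $(g^{-1},h_i)<|h_i|-|p_i|$ for all $i$ to the already-settled situation of Corollary \ref{C:Rips-LO-2} and Lemma \ref{L:Rips-LO-2}, by showing that if $g$ fails to be $p$-reduced with some $h_i$, then it is automatically $p$-reduced with all the other generators $h_j$, $j\neq i^{\pm 1}$, and that $gh_i$ (or $gh_i^{-1}$) is $p$-reduced with all generators except $h_i^{\mp 1}$. The small cancellation hypothesis on the $h_{2i}$, together with the bound $(g^{-1},h_i)<|h_i|-|p_i|$, is exactly what guarantees this: cancellation in $gh_i^{\pm 1}$ can eat at most a piece-sized prefix of $h_i^{\pm 1}$, so the reduced form of $gh_i^{\pm 1}$ still ends with a genuine (non-piece) tail of $h_i^{\pm 1}$, hence is $p$-reduced with any relator other than the one whose inverse would cancel it.

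First I would set up the case division exactly as in the displayed Cases 1--4 inside the proof of Lemma \ref{L:Rips-LO-3}, but now applied to the element $g h_i^{\pm 1}$ rather than to a $p$-reduced $g$: the hypothesis $(g^{-1},h_i)<|h_i|-|p_i|$ forces the reduced form of $g$ to split as $g=g_0 u$, where $u$ is a proper (non-piece) suffix of $v_i^{-1}$ or of $w_i$ (depending on which end cancels), so that $gh_i^{\pm1}$ takes one of the finitely many shapes already analysed. For each such shape I would exhibit the companion element $f\in F$ explicitly — built from $g_0'=\phi(g_0)$ by inserting, when needed, a ``buffer'' letter ($c$, $d_2$, or $e$) chosen so that it sits in the correct place relative to $A$, $X$, $Y$ in both orders $\leq_1$ and $\leq_2$, precisely the trick used in Cases 2--4 of Lemma \ref{L:Rips-LO-3} and relying on Remark \ref{R:rem-rips-1} and Lemma \ref{L:tau}. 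Having produced $f$, I would verify the two $\tau$-identities required as hypotheses of Lemma \ref{L:Rips-LO-2} (namely $\tau(g p_j^{-1}a^\varepsilon)=\tau'(f\overline{p_j}^{-1}{a'}^\varepsilon)$ and $\tau(gx^\varepsilon)=\tau'(fx^\varepsilon)$ for all $j$, $a\in A$, $x\in X$), which then yields $\tau(ghg^{-1})=\tau'(f\varphi(h)f^{-1})$ for every $h\in H$; combined with the evident identity $\omega(ghg^{-1})=\omega(f\varphi(h)f^{-1})$ this gives $1\leq^g h \iff 1\leq'^f \varphi(h)$, as required. The case where $g$ is already $p$-reduced with every $h_i$ is disposed of immediately by Corollary \ref{C:Rips-LO-2}.

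The main obstacle I anticipate is bookkeeping rather than conceptual: one must keep straight the seven families \eqref{rips-relator-1}--\eqref{rips-relator-7} of relators and check that in each family the letters occurring in a maximal suffix-piece $p_i$ lie in $A\cup X\cup\{c,e\}$ (never in $Y$ or $\{d_1,d_2,d_3\}$), since only then does $\overline{p_i}$ make sense and does the $\phi$-swap behave correctly; the word-combinatorics of which prefix of $h_i$ can cancel against $g$ also has to be tracked family by family, because relators of types \eqref{rips-relator-3}--\eqref{rips-relator-6} have the form $w\,x^{\pm1}\,z\,x^{\mp1}\,v$ and the cancellation can expose an internal $x^{\pm1}$ or the central generator $z$. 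In each instance the buffer-letter argument of Lemma \ref{L:Rips-LO-3} applies verbatim, so I would not reproduce the computations but simply indicate, for each shape of $gh_i^{\pm1}$, the corresponding $f$ and cite Lemma \ref{L:tau}, Remark \ref{R:rem-rips-1} and Lemma \ref{L:Rips-LO-2}; the proof then closes exactly as Lemma \ref{L:perfect-LO-3} did.
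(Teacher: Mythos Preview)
Your outline is the paper's strategy, but the claim that ``the buffer-letter argument of Lemma~\ref{L:Rips-LO-3} applies verbatim'' is where the proposal breaks down, and this is precisely the hardest part of the proof. For relators of types \eqref{rips-relator-3}--\eqref{rips-relator-4}, when cancellation in $gh_i$ stops at the central block---say $g=g_0x_i^{-\varepsilon}w^{-1}(a_1,a_2)$ so that $gh_i=g_0y^{\delta}x_i^{-\varepsilon}v(a_1,a_2)$---the image $k_i$ carries $d_2$ where $h_i$ has $y$, and $y_i$ where $h_i$ has $x_i$. The identities one actually needs for $f_0$ are
\[
\tau(g_0x_i^{-\varepsilon})=\tau'(f_0y_i^{-\varepsilon})\quad\text{and}\quad\tau(g_0y^{\delta})=\tau'(f_0d_2^{\delta}),
\]
and neither is of the shape produced by Lemma~\ref{L:Rips-LO-3}: the first swaps an $X$-letter for a $Y$-letter, the second swaps a $Y$-letter for $d_2$. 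Satisfying both simultaneously forces a fresh case split on the last letter $t$ of $g_0$ according to the relative positions of $t$, $x_i$, $y$ in the order; the paper runs through eight sub-cases and the buffer letters required include $d_1$, $d_3$, and $y_i$ in addition to your $c$, $d_2$, $e$. (For instance, if $t=x_i$ and $t<y$ one takes $s=d_1$; if $t<x_i$ and $t>y$ one takes $s=d_3$.) This is new work, not a citation of Lemma~\ref{L:Rips-LO-3}.

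There is a second, smaller gap: verifying the hypotheses of Lemma~\ref{L:Rips-LO-2} for $g$ alone is not enough. If $h=h_{i_1}\cdots h_{i_l}$ begins with $h_i$ (the one generator with which $g$ is \emph{not} $p$-reduced), then the left end of $ghg^{-1}$ is governed by $gh_i$, not by $g$; you must therefore also establish $\tau(gh_iq_ja^{\zeta})=\tau'(fk_i\overline{q_j}{a'}^{\zeta})$ and $\tau(gh_ix^{\zeta})=\tau'(fk_ix^{\zeta})$, and treat $h=h_i^{\pm1}$ by a direct computation of $\tau(gh_ig^{-1})$. The paper does exactly this. The remaining cases (cancellation stops inside $w$ or inside $v$, or $h_i$ is of type \eqref{rips-relator-5}--\eqref{rips-relator-7}) are routine with $f=g'$ or the obvious swap, as you say.
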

\begin{proof}  Let $g\in G_1$ such that $(g^ {-1},h_i)< |h_i|-|p_i|$, for all $1\leqslant i$. Note that this implies that the reduced form of $gh_i$ ends with $p_i$.  In particular, $gh_i$ is $p$-reduced with all $h_j\not=h_i^{-1}$.

If $g$ is $p$-reduced with all $h_i$, we are done by Corollary \ref{C:Rips-LO-2}. Thus let $g$ be not $p$-reduced with $h_i$. Then  it has to be $p$-reduced for all other $h_j$, $j\not=i$. There are 5 cases to consider. 

\emph{Case 1:} $h_i=w(a_1,a_2)x^{\varepsilon}$. Then $g=g_0u_0^{-1}(a_1,a_2)$, where $w=u_0u_1$. By assumption $u_1$ is a non-trivial word. Indeed, otherwise $p_i=x_i$. This implies that $(g^{-1},h_i)<|h_i|-|p_i|=|h_i|-1$, a contradiction. 
In this case  $gh_i=g_0u_1(a_1,a_2)x_i^{\varepsilon}$ and $k_i=\varphi(h_i)=w(b_1,b_2)x^{\varepsilon}$. \\
We let $f=g_0'u_0^{-1}(b_1,b_2)=g'$.

\emph{Case 2:} $h_i=w(a_1,a_2)x_i^{\varepsilon}y^{\delta}x_i^{-\varepsilon}v(a_1,a_2)$ and $g=g_0x_i^{-\varepsilon}w^{-1}(a_1,a_2)$, so that $gh_i=g_0y^{\delta}x_i^{-\varepsilon}v(a_1,a_2)$. Then $k_i=w(b_1,b_2)y_i^{\varepsilon}d_2^{\delta}y_i^{-\varepsilon}v(b_1,b_2)$. Below we define $f_0$ such that 
\begin{align}
 \tau(g_0x_i^{-\varepsilon})=\tau'(f_0y_i^{-\varepsilon}) \hbox{ and } \tau(g_0y^{\delta})=\tau'(f_0d_2^{\delta}). \label{E:rips-case-3a}
\end{align} 
We now explain how to choose $f_0$. We first split $g_0=g_1t^{\gamma}$ and let $f_0=g_0's^{\gamma}=g_1'{t'}^{\gamma}s^{\gamma}$, where $s$ is a letter chosen depending on $t$. Then, by Lemma \ref{L:tau-2}, 
\begin{align*}
 \tau(g_0x^{-\varepsilon})=\tau(g_1t^{\gamma}) +\tau(t^{\gamma}x_i^{-\varepsilon}) \hbox{ and } \tau(g_0y^{\delta})=\tau(g_1{t}^{\gamma})+\tau(t^{\gamma}{y}^{\delta}),
\end{align*}
and 
\begin{align*}
 \tau'(f_0x^{-\varepsilon})=\tau'(g_1'{t'}^{\gamma}s^{\gamma}) +\tau'(s^{\gamma}y_i^{-\varepsilon}) \hbox{ and } \tau'(f_0{d_2}^{\delta})=\tau'(g_1'{t'}^{\gamma}{s}^{\gamma})+\tau'(s^{\gamma}{d_2}^{\delta}).
\end{align*}
The first terms in these sums equal by Lemma \ref{L:tau}(1).  
Below we choose $s$, hence, $f_0$, such that 
$$
\hbox{$\tau(t^{\gamma}x_i^{-\varepsilon})=\tau'(s^{\gamma}y_i^{-\varepsilon})$ and $\tau(t^{\gamma}{y}^{\delta})=\tau'(s^{\gamma}{d_2}^{\delta})$.}
$$
This yields \eqref{E:rips-case-3a}. We then let $f=f_0y_i^{-\varepsilon}w^{-1}(b_1,b_2)$ so that $fk_i=f_0d_2^{\delta}y_i^{-\varepsilon}v(b_1,b_2)$. 

 The choice of $s$ depends on $t$ as follows. 
If $t< c$ we let $s={t'}$, and the claim follows as $t$ and $t'$ are smaller than $x_i$ and $y$. Otherwise, $t\geq c$. If $t<x_i$ and $t<y$ we let $s=c$ and the claim follows as $c<X$ and $c<Y$. \\
If $t<x_i$ and $t=y$, then $\leq=\leq_2$, $\leq'=\leq_1$ and $\delta=\gamma$. We let $s=d_2$. As $d_2<_1y_i$  and $\delta=\gamma$, the claim follows by definition of $\tau$. \\
If $t<x_i$ and $t>y$, then $\leq=\leq_2$ and $\leq'=\leq_1$. Then we let $s=d_3$. As $d_3<_1y_i$ and $d_3>d_2$, the claim follows. \\
If $t=x_i$ and $t<y$, then $\gamma=-\varepsilon$. We let $s=d_1$. As $\gamma=-\varepsilon$ and $d_1<d_2$, the claim follows by definition of $\tau$. \\
If $t=x_i$ and $t>y$, then $\leq=\leq_2$ and $\leq'=\leq_1$. We let $s={t'}=y_i$. As $y_i>_1d_2$ this implies the claim. \\
If $t>x_i$ and $t<y$, then $\leq=\leq_1$, $\leq'=\leq_2$ and we let $s=d_1$. As $d_1>_2y_i$ and $d_1<d_2$ the claim follows. \\
If $t>x_i$ and $t=y$, then $\gamma=\delta$. Then we let $s=e$. As $e>y_i$ and $\gamma=\delta$, the claim follows by definition of $\tau$.\\
If $t>x_i$ and $t>y$, we let $s=e$. As $e>y_i$ and $e>d_2$, the claim follows. 

\emph{Case 3:} $h_i=w(a_1,a_2)x_i^{\varepsilon}y^{\delta}x_i^{-\varepsilon}v(a_1,a_2)$ and $g=g_0y^{-\delta}x_i^{-\varepsilon}w^{-1}(a_1,a_2)$, so that $gh_i=g_0x_i^{-\varepsilon}v(a_1,a_2)$. Then $k_i=\varphi(h_i)=w(b_1,b_2)y_i^{\varepsilon}d_2^{\delta}y_i^{-\varepsilon}v(b_1,b_2)$. We fix $f_0$ such that 
\begin{align}
 \tau(g_0x_i^{-\varepsilon})=\tau'(f_0y_i^{-\varepsilon}) \hbox{ and } \tau(g_0y^{-\delta})=\tau'(f_0d_2^{-\delta}). \label{E:rips-case-4a}
\end{align} 
We then let $f=f_0d_2^{-\delta}y_i^{-\varepsilon}w^{-1}(b_1,b_2)$ so that $fk_i=f_0y_i^{-\varepsilon}v(b_1,b_2)$. The choice of $f_0$ is exactly as in Case 2. 

\emph{Case 4:} $h_i=w(a_1,a_2)x_i^{\varepsilon}y^{\delta}x_i^{-\varepsilon}v(a_1,a_2)$ and $g=g_0u_0^{-1}(a_1,a_2)x_i^{\varepsilon}y^{-\delta}x_i^{-\varepsilon}w^{-1}(a_1,a_2)$, where $v=u_0u_1$ and $u_1$ is a non-trivial word that ends with the piece $p_i$. Then $gh_i=g_0u_1(a_1,a_2)$ and  $k_i=\varphi(h_i)=w(b_1,b_2)y_i^{\varepsilon}d_2^{\delta}y_i^{-\varepsilon}v(b_1,b_2)$. In this case we let $f=g_0'u_0^{-1}(b_1,b_2)y_i^{\varepsilon}d_2^{-\delta}y_i^{-\varepsilon}w^{-1}(b_1,b_2)$.

\emph{Case 5:} none of the above. In this case we let $f=g'$. 

We now proof the assertion of the Lemma in Case 2. 
We first prove that $\tau(gh_ig^{-1})=\tau'(fk_if^{-1})$. To do so, we expand, using Lemma \ref{L:tau-2}, 
$$\tau(gh_ig^{-1})=\tau(g_0y^{\delta})+\tau(y^{\delta}x_i^{-\varepsilon})+\tau(x_i^{-\varepsilon}v(a_1,a_2)w(a_1,a_2)x_i^{\varepsilon})+\tau(x_i^{\varepsilon}g_0^{-1})$$
and 
$$ \tau'(fk_if^{-1})=\tau'(f_0d_2^{\delta})+\tau'(d_2^{\delta}y_i^{-\varepsilon})+\tau'(y_i^{-\varepsilon}v(b_1,b_2)w(b_1,b_2)y_i^{\varepsilon})+\tau(y_i^{\varepsilon}f_0^{-1}).$$
Note that $vw$ has length at least 1. The first and the last terms of these sums equal by \eqref{E:rips-case-3a}, the second terms by Remark \ref{R:rem-rips-1} and third terms equal by definition. Thus the two sums indeed equal. We also observe that $\omega(gh_ig^{-1})=\omega(fk_if^{-1})$. 

Next let $h=h_{i_1}\cdots h_{i_l}$, where either $l>1$ or $h_{i_1}\not= h_i$ and $h_{i_1}\not = h_i^{-1}$. If $l>1$, either $g$ is $p$-reduced with $h_{i_1}$ or $h_{i_1}=h_i$ and $gh_i$ is $p$-reduced with $h_{i_2}$. Similarly,  either $g$ is $p$-reduced with $h_{i_l}^{-1}$ or $h_{i_l}=h_i$ and $gh_i$ is $p$-reduced with $h_{i_{l-1}}^{-1}$. Otherwise $g$ is $p$-reduced with $h_{i_1}$ and $h_{i_1}^{-1}$. The assertion of the lemma thus follows from Lemma \ref{L:Rips-LO-2} given that the following hold for all pieces $q_j$ such that $h_j$ and $g$,  or $h_j$ and $gh_i$, respectively, are $q_j$-reduced:
\begin{align*}
& \tau(gq_ja^{\zeta})=\tau'(f\overline{q_j}a'^{\zeta})  \hbox{ and } \tau(gx^{\zeta})=\tau'(fx^{\zeta}), \\
& \tau(gh_iq_ja^{\zeta})=\tau'(fk_i\overline{q_j}a'^{\zeta})  \hbox{ and } \tau(gh_ix^{\zeta})=\tau'(fk_ix^{\zeta}),
\end{align*}
for any choice of $\zeta \in \{\pm 1\}$. 
We prove that $\tau(gh_iq_ja^{\zeta})=\tau'(fk_i\overline{q_j}a'^{\zeta})$. The proof of the other equalities is analogous and will not be given. We expand, using Lemma \ref{L:tau-2}:
$$
\tau(gh_iq_ja^{\zeta})=\tau(g_0y^{\delta})+\tau(y^{\delta}x_i^{-\varepsilon})+\tau(x_i^{-\varepsilon}v(a_1,a_2)q_ja^{\zeta})
$$
and
$$ \tau'(fk_i\overline{q_j}a'^{\zeta})=\tau'(f_0d_2^{\delta})+\tau'(d_2^{\delta}y_i^{-\varepsilon})+\tau'(y_i^{-\varepsilon}v(a_1,a_2)\overline{q_j}{a'}^{\zeta}).$$
The first summands of these two sums are equal by \eqref{E:rips-case-3a}, the second by Remark \ref{R:rem-rips-1}, and the third are equal by definition.  
This terminates the proof of the lemma in Case 2. The proof in Case~3 is completely analogous, using \eqref{E:rips-case-4a} instead of \eqref{E:rips-case-3a}. The proof in the other cases is less involved and follows a similar line of argument.  
\end{proof}

To prove that $\varphi^{-1}$ is compatible for $(\mathcal L,\mathcal L)$ we need analogues of Lemmas  \ref{L:Rips-LO-2}, \ref{L:Rips-LO-3} and \ref{L:Rips-LO-4} for $\varphi^{-1}$. We start with the analogue of Lemmas \ref{L:Rips-LO-2} and \ref{L:Rips-LO-3}. Their proofs are completely analogous to the proof of Lemma \ref{L:Rips-LO-2} and \ref{L:Rips-LO-3} and will thus be omitted. In fact, in the proofs it is sufficient to replace any occurrence of $a_1$ by $b_1$, $a_2$ by $b_2$, $a$ by $b$, $h_i$ by $k_i$ and vice versa. 
 
 \begin{lem}\label{L:Rips-I-LO-2} Let $k=k_{i_1}\cdots k_{i_l}$. Let $g_1,g_3$ and $g_2,g_4\in F$ such that $g_1$ is $p_{i_1}$-reduced with $k_{i_1}$, $g_2$ is $\overline{p_{i_1}}$-reduced with $h_{i_1}$, $g_3$ is $p_{i_l}$-reduced with $k_{i_l}^{-1}$  and $g_4$ $\overline{p_{i_l}}$-reduced with $h_{i_l}^{-1}$. If, for every $\varepsilon\in\{\pm 1\}$, $b\in A$ and $x\in X$, 
\begin{enumerate}
\item $ \tau(g_1p_{i_1}^{-1}b^{\varepsilon})=\tau'(g_2{\overline{p_{i_1}}}^{-1}{b'}^{\varepsilon})$ and $\tau(g_1x^{\varepsilon})=\tau'(g_2{x}^{\varepsilon})$, 
\item $ \tau(g_3p_{i_l}^{-1}b^{\varepsilon})=\tau'(g_4{\overline{p_{i_l}}}^{-1}{b'}^{\varepsilon}) $ { and }  $\tau(g_3x^{\varepsilon})=\tau'(g_4{x}^{\varepsilon})$, 
\end{enumerate}
 then 
$$\tau(g_1kg_3^{-1})=\tau'(g_2\varphi^{-1}(k)g_4^{{-1}}).$$
\end{lem}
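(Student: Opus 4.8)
The plan is to mirror the proof of Lemma \ref{L:Rips-LO-2} verbatim, with the roles of the two free groups $G_1=\langle a_1,a_2\rangle$-factors and $G_2=\langle b_1,b_2\rangle$-factors interchanged, i.e. reading every occurrence of $a_i$ as $b_i$ and every $h_j$ as $k_j$ and vice versa, and replacing $\varphi$ by $\varphi^{-1}$. Concretely: suppose first that $p_{i_1}^{-1}k_{i_1}$ starts with $b\in A$; then by Lemma \ref{L:tau-2} we expand
\begin{align*}
\tau(g_1kg_3^{-1}) &= \tau(g_1p_{i_1}^{-1}b) + \tau(b^{-1}p_{i_1}k_{i_1}\cdots k_{i_l}g_3^{-1}), \\
\tau'(g_2\varphi^{-1}(k)g_4^{-1}) &= \tau'(g_2\overline{p_{i_1}}^{-1}b') + \tau'(b'^{-1}\overline{p_{i_1}}h_{i_1}\cdots h_{i_l}g_4^{-1}),
\end{align*}
and analogously if $p_{i_1}$ is trivial and $k_{i_1}$ begins with $x\in X$. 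Hypotheses (1)–(2) handle the first summand in each expansion and its mirror at the $g_3,g_4$-end, so the proof reduces to establishing
$$
\tau(p_{i_1}^{-1}k_{i_1}\cdots k_{i_l}p_{i_l}^{-1}) = \tau'(\overline{p_{i_1}}^{-1}h_{i_1}\cdots h_{i_l}\overline{p_{i_l}}^{-1}),
$$
allowing $p_{i_1}$ or $p_{i_l}$ to be trivial.

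This last equality is exactly the statement that $\tau$ of a reduced concatenation of $k$-relators (with pieces chopped off the ends) equals $\tau'$ of the corresponding concatenation of $h$-relators. Since $\tau$ and $\tau'$ are additive under reduced concatenation up to the junction terms $\tau(t_is_{i+1})$ (Lemma \ref{L:tau-2}), this is verified block by block: each relator $k_j$ (resp. $h_j$) contributes $\tau'$ (resp. $\tau$) equal to that of its partner by the construction in \eqref{rips-relator-1}–\eqref{rips-relator-7} together with Remark \ref{R:rem-rips-1}, and the junctions between consecutive relators are pieces in $A$, which are mapped correctly by $g\mapsto g'$ and by $p\mapsto\overline p$ (again using that such pieces never contain a letter of $Y$). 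No letter of $Y$ ever meets a letter of $A$ across a junction, so the only nontrivial interactions are the ones already tabulated in Remark \ref{R:rem-rips-1}, namely \eqref{E:rem-rips-1-4} and \eqref{E:rem-rips-1-3}.

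The main obstacle — to the extent there is one — is purely bookkeeping: one must check that the symmetry $a_i\leftrightarrow b_i$, $h_j\leftrightarrow k_j$, $\varphi\leftrightarrow\varphi^{-1}$ really is an involution of the whole setup, i.e. that the defining equations \eqref{rips-relator-1}–\eqref{rips-relator-7} and the orders $o_1,o_2$ are interchanged by the permutation $\phi$ of generators, and that the pieces $p_i$ of $k_i$ correspond under $\overline{(\cdot)}$ to pieces of $h_i$. Both are immediate from the definitions: $\phi$ swaps the blocks $a_1a_2\leftrightarrow b_1b_2$ and $x_1\cdots x_n\leftrightarrow y_1\cdots y_n$ while fixing everything else, it carries $o_1$ to $o_2$, and it carries $h_j$ to $k_j$ for all $j$ (inspecting \eqref{rips-relator-1}–\eqref{rips-relator-7}). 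Hence the proof of Lemma \ref{L:Rips-LO-2} applies with the substitutions indicated, and we omit the repetition.
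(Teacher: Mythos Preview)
Your proposal follows exactly the paper's approach: the paper simply states that the proof is obtained from that of Lemma~\ref{L:Rips-LO-2} by replacing every occurrence of $a_1,a_2,a,h_i$ by $b_1,b_2,b,k_i$ and vice versa, and you spell out this substitution in somewhat more detail. The reduction to the core equality via Lemma~\ref{L:tau-2} and its verification via Remark~\ref{R:rem-rips-1} are correct.

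There is, however, a mistake in your final paragraph: the permutation $\phi$ does \emph{not} carry $h_j$ to $k_j$. For instance, for a relator of type~\eqref{rips-relator-1} one has $\phi(h_{j_i})=\phi(w_{j_i}(a_1,a_2)x_i)=w_{j_i}(b_1,b_2)y_i$, whereas $k_{j_i}=w_{j_i}(b_1,b_2)x_i$; for type~\eqref{rips-relator-3}, $\phi$ sends the middle letter $y_l$ to $x_l$, not to $d_2$. The symmetry you actually need is \emph{not} induced by $\phi$; it is the purely textual substitution $a_i\leftrightarrow b_i$, $h_j\leftrightarrow k_j$ in the proof of Lemma~\ref{L:Rips-LO-2}, together with the fact that Remark~\ref{R:rem-rips-1} and Lemma~\ref{L:tau-2} are symmetric under swapping $\tau\leftrightarrow\tau'$ (since $\tau$ already denotes either of $\tau_1,\tau_2$). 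This is a bookkeeping slip rather than a gap in the argument, but the meta-justification you give for why the mirror works should be corrected.
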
 

\begin{lem}\label{L:Rips-I-LO-3}  If $g\in F$ is $p_i$-reduced with all $k_i$, then there is $f\in F$ such that for every $\varepsilon\in\{\pm 1\}$, $b\in A$ and $x\in X$, 
$$\hbox{$ \tau(gp_i^{-1}b^{\varepsilon})=\tau'(f{\overline{p_i}}^{-1}{b'}^{\varepsilon})$ and $\tau(gx^{\varepsilon})=\tau'(f{x}^{\varepsilon})$.}$$
\end{lem}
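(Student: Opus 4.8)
The plan is to transcribe the proof of Lemma~\ref{L:Rips-LO-3} essentially verbatim, interchanging throughout the roles of the generators $a_1,a_2$ and $b_1,b_2$ (hence of the orders $\leq_1,\leq_2$, of the functions $\tau,\tau'$, and of the relators $h_i,k_i$). This works because $o_2$ is the image of $o_1$ under the generator permutation $\phi$, so the two directions are symmetric; one only has to observe that the $k_i$ terminate either in a letter of $A$ (for the relators coming from \eqref{rips-relator-3}--\eqref{rips-relator-7}) or in a letter $x_i\in X$ (for the relators coming from \eqref{rips-relator-1}), exactly mirroring the $h_i$, so the same case list applies.

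More precisely, I would write $g$ in reduced form and split into the same four cases as in the proof of Lemma~\ref{L:Rips-LO-3}, according to the last letter of $g$, producing $f$ explicitly in each. \textbf{Case 1:} $g$ ends with a letter of $A$, with $c$, or with $e$; set $f=g'$, and both equalities follow from the definitions of $o_1,o_2$ since these letters (or their $\phi$-images) sit in matching positions in the two orders. \textbf{Case 2:} $g=g_0x_i$ with maximal suffix piece $p_i=x_i^{-1}$ and $gk_i^{-1}=g_0w_i^{-1}(b_1,b_2)$, all other $p_j$ trivial; writing $g_0=g_1t^{\gamma}$, set $f=g_1'{t'}^{\gamma}s^{\gamma}x_i$ for the spacer letter $s\in\{c,d_1,d_2,d_3,e\}$ prescribed, as in Case~2 of Lemma~\ref{L:Rips-LO-3}, by the position of $t$ relative to $x_i$ and to the $Y$-block; the bulk of each $\tau$-equality then matches by Lemma~\ref{L:tau}(1) while $s$ is chosen so that $\tau(t^{\gamma}w)=\tau'(s^{\gamma}w)$ for the relevant two-letter words $w$, and the same $s$ disposes of the indices $j\ne i$, where the equality reduces to $\tau(gb^{\varepsilon})=\tau'(fb^{\varepsilon})$. \textbf{Case 3:} $g$ ends with a letter in $Y\cup\{d_1,d_2,d_3\}$, so $gk_i$ is reduced and every $p_i$ is trivial; again $g=g_1t^{\gamma}$ and $f=g_1'{t'}^{\gamma}s^{\gamma}$ with $s$ chosen between $c$ and $e$ according to whether $t$ is below or above the $X$-block, which is possible because $g$ ending in such a letter forces $\leq$ to be $\leq_2$ or $\leq_1$ respectively. \textbf{Case 4:} $g$ ends with $x_i$ but $|p_i|>1$, so $g=g_0b^{\delta}x_i$ with $b\in A$; set $f=g_0'{b'}^{\delta}x_i$ and both equalities hold by definition. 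These four cases are exhaustive.

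The step I expect to be the main obstacle is the same finite but fiddly verification already carried out in the proof of Lemma~\ref{L:Rips-LO-3}: confirming in Cases~2 and~3 that a single spacer letter $s$ simultaneously repairs the $\tau$-versus-$\tau'$ discrepancy in the equation for $gp_i^{-1}b^{\varepsilon}$ and in the one for $gx^{\varepsilon}$ (and, in Case~2, in the equations for all the remaining indices). This succeeds precisely because in both $o_1$ and $o_2$ the letter $c$ lies just above the $A$-block and just below the $X$- and $Y$-blocks, $e$ lies above everything, and $d_1,d_2,d_3$ are wedged between the two variable blocks; these placements of the auxiliary generators are exactly what make each discrepancy correctable, and no idea beyond those in the proof of Lemma~\ref{L:Rips-LO-3} is required. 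The same substitution turns the proof of Lemma~\ref{L:Rips-LO-2} into a proof of Lemma~\ref{L:Rips-I-LO-2}, so the two lemmas together will feed into the compatibility of $\varphi^{-1}$ for $(\mathcal L,\mathcal L)$ just as Lemmas~\ref{L:Rips-LO-2}--\ref{L:Rips-LO-3} feed into that of $\varphi$.
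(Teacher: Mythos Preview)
Your proposal is correct and is exactly the approach the paper takes: the paper omits the proof of Lemma~\ref{L:Rips-I-LO-3} entirely, stating only that it is ``completely analogous'' to that of Lemma~\ref{L:Rips-LO-3} and that ``in the proofs it is sufficient to replace any occurrence of $a_1$ by $b_1$, $a_2$ by $b_2$, $a$ by $b$, $h_i$ by $k_i$ and vice versa.'' Your write-up in fact supplies more detail than the paper does; the only minor slip is that in Case~2 the spacer letters actually used in the proof of Lemma~\ref{L:Rips-LO-3} are drawn only from $\{c,e\}$ (together with the option of no spacer), not from the full set $\{c,d_1,d_2,d_3,e\}$, but since you defer to that proof this is harmless.
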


 Finally, we prove an analogue of Lemma \ref{L:Rips-LO-4} for $\varphi^{-1}$.
 
\begin{lem}\label{L:Rips-I-LO-4} Let $g\in F$ such that $(g^{-1},k_i) < |k_i|-|p_i|$, for all $1\leqslant i$. 
Then there is $f\in F$ such that 
$ 1\leq^g k \hbox{ if and only if } 1\leq'^f \varphi^ {-1}(k), \hbox{ for all $k\in K$.}$
\end{lem}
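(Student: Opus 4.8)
The plan is to transcribe the proof of Lemma~\ref{L:Rips-LO-4}, interchanging throughout the roles of $h_i$ and $k_i$, of the generators $a_1,a_2$ and $b_1,b_2$, and of the maps $\varphi$ and $\varphi^{-1}$, and replacing every appeal to Lemmas~\ref{L:Rips-LO-2}, \ref{L:Rips-LO-3} and Corollary~\ref{C:Rips-LO-2} by Lemmas~\ref{L:Rips-I-LO-2}, \ref{L:Rips-I-LO-3} and the following consequence of them, the counterpart of Corollary~\ref{C:Rips-LO-2}: \emph{if $g$ is $p$-reduced with every $k_i$, then there is $f\in F$ with $1\leq^g k$ if and only if $1\leq'^f\varphi^{-1}(k)$ for all $k\in K$}. (One obtains $f$ from Lemma~\ref{L:Rips-I-LO-3}, checks for each normal form $k=k_{i_1}\cdots k_{i_l}\in K$ the hypotheses of Lemma~\ref{L:Rips-I-LO-2} for the pair $(g,f)$ — absorbing the two one-letter continuations as in the proof of Lemma~\ref{L:Rips-LO-2} — and notes that $\omega(gkg^{-1})=\omega(f\varphi^{-1}(k)f^{-1})$ by definition.) As in Lemma~\ref{L:Rips-LO-4}, the hypothesis $(g^{-1},k_i)<|k_i|-|p_i|$ forces the reduced form of $gk_i$ to end with $p_i$, so $gk_i$ is $p$-reduced with all $k_j\ne k_i^{-1}$; hence it only remains to treat the case in which $g$ fails to be $p$-reduced with a single $k_i$ while being $p$-reduced with every other $k_j$.

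I would then split according to the shape of $k_i$ and to how far $g$ cancels into it. If $k_i$ comes from \eqref{rips-relator-1}--\eqref{rips-relator-2} or from \eqref{rips-relator-5}--\eqref{rips-relator-7}, or if $k_i$ comes from \eqref{rips-relator-3}--\eqref{rips-relator-4} but $g$ does not cancel the conjugator $y_i^{\varepsilon}$ of $k_i$ (so that $gk_i$ still contains $y_i^{\varepsilon}d_2^{\delta}y_i^{-\varepsilon}v(b_1,b_2)$ as a suffix), then one takes $f=g'$: these are the counterparts of Cases~1 and~5 of Lemma~\ref{L:Rips-LO-4}. Outside $A$ the only letters exposed in $gk_i$ are then $x$-letters flanked by letters of $A$ (from \eqref{rips-relator-1}--\eqref{rips-relator-2}), the $z$-letters and $r$-syllables of \eqref{rips-relator-5}--\eqref{rips-relator-7} on which $\varphi^{-1}$ already agrees with the generator permutation $\phi$, and a block of the form $y_i^{\varepsilon}d_2^{\delta}y_i^{-\varepsilon}$; in the last situation the only discrepancy between $fh_i$ and $(gk_i)'$ is that, inside a conjugate $(\cdot)^{\varepsilon}(\cdot)^{\delta}(\cdot)^{-\varepsilon}$, the middle letter is a letter $x_l$ of $X$ in the one and $d_2$ in the other, and this leaves $\tau_{o_1}$ and $\tau_{o_2}$ unchanged because $d_2$ lies between the $X$-block and the $Y$-block in both $o_1$ and $o_2$, so $x_l$ and $d_2$ lie on the same side of $y_i$. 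With this, the hypotheses of Lemma~\ref{L:Rips-I-LO-2} for $f=g'$ are checked by the same (simpler) argument as in the corresponding case of Lemma~\ref{L:Rips-LO-4}, mutatis mutandis.

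The substantive case is $k_i=w(b_1,b_2)\,y_i^{\varepsilon}d_2^{\delta}y_i^{-\varepsilon}\,v(b_1,b_2)$ from \eqref{rips-relator-3}--\eqref{rips-relator-4} with $g$ cancelling past $y_i^{\varepsilon}$; here $\varphi^{-1}(k_i)=h_i=w(a_1,a_2)\,x_i^{\varepsilon}y_l^{\delta}x_i^{-\varepsilon}\,v(a_1,a_2)$, and there are three sub-cases mirroring Cases~2--4. In (a) one has $g=g_0y_i^{-\varepsilon}w^{-1}(b_1,b_2)$ and $gk_i=g_0d_2^{\delta}y_i^{-\varepsilon}v(b_1,b_2)$: writing $g_0=g_1t^{\gamma}$, I would pick a single auxiliary letter $s$ with
\[
\tau(t^{\gamma}y_i^{-\varepsilon})=\tau'(s^{\gamma}x_i^{-\varepsilon})\qquad\text{and}\qquad\tau(t^{\gamma}d_2^{\delta})=\tau'(s^{\gamma}y_l^{\delta})
\]
(and, exactly as in Lemma~\ref{L:Rips-LO-4}, the same $s$ realises the corresponding identities for the inverted two-letter words), set $f_0=g_0's^{\gamma}$ and $f=f_0x_i^{-\varepsilon}w^{-1}(a_1,a_2)$, so that $fh_i=f_0y_l^{\delta}x_i^{-\varepsilon}v(a_1,a_2)$; by Lemma~\ref{L:tau-2} and Lemma~\ref{L:tau}(1) the displayed identities upgrade to $\tau(g_0y_i^{-\varepsilon})=\tau'(f_0x_i^{-\varepsilon})$ and $\tau(g_0d_2^{\delta})=\tau'(f_0y_l^{\delta})$, the analogue of \eqref{E:rips-case-3a}. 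In (b) one has $g=g_0d_2^{-\delta}y_i^{-\varepsilon}w^{-1}(b_1,b_2)$ and $gk_i=g_0y_i^{-\varepsilon}v(b_1,b_2)$: the same $f_0$ works with $\delta$ replaced by $-\delta$, as Case~3 uses \eqref{E:rips-case-4a} in place of \eqref{E:rips-case-3a}. In (c) one has $g=g_0u_0^{-1}(b_1,b_2)y_i^{\varepsilon}d_2^{-\delta}y_i^{-\varepsilon}w^{-1}(b_1,b_2)$ with $v=u_0u_1$ and $u_1$ non-trivial ending in $p_i$, so that $gk_i=g_0u_1(b_1,b_2)$; then $f=g_0'u_0^{-1}(a_1,a_2)x_i^{\varepsilon}y_l^{-\delta}x_i^{-\varepsilon}w^{-1}(a_1,a_2)$ gives $fh_i=g_0'u_1(a_1,a_2)$, mirroring Case~4. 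In each sub-case, once $f$ is fixed, the equalities $\tau(gk_ig^{-1})=\tau'(fh_if^{-1})$ and $\omega(gk_ig^{-1})=\omega(fh_if^{-1})$ follow by expanding with Lemma~\ref{L:tau-2} — the central factor matching by Remark~\ref{R:rem-rips-1} and the extremal factors by the displayed $s$-identities — and then, since $g$ resp.\ $gk_i$ is $p$-reduced with all the relevant $k_j$, the general statement $\tau(gkg^{-1})=\tau'(f\varphi^{-1}(k)f^{-1})$ for $k=k_{i_1}\cdots k_{i_l}\in K$ follows from Lemma~\ref{L:Rips-I-LO-2}; this is the computation of Case~2 of Lemma~\ref{L:Rips-LO-4}, relabelled.

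The main obstacle is the choice of $s$ in sub-case (a): one must verify, running $t$ through the letters of $A$, then $c$, then the $Y$-block, then $d_1,d_2,d_3$, then the $X$-block, then $e$, that a single letter $s$ — one of $t'$, $c$, the three $d$-letters, $e$, or the conjugator — realises both displayed $\tau$-identities simultaneously. This is the exact mirror of the nine-line case distinction in Case~2 of Lemma~\ref{L:Rips-LO-4}, with the letters $x_i$, a generic $Y$-letter and $d_2$ permuted into the roles there played by the generic $Y$-letter, $d_2$ and $x_i$ respectively, and it is the only step that is not a mechanical relabelling: it rests on the specific block structure of $o_1$ and $o_2$ — the block $d_1d_2d_3$ lying strictly between the $X$- and $Y$-blocks, which are interchanged between the two orders — which is precisely what makes the two identities compatible for a single $s$. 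Once sub-case (a) is settled, everything else is bookkeeping identical to that in the proof of Lemma~\ref{L:Rips-LO-4}.
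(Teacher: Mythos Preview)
Your proposal is correct and follows essentially the same approach as the paper: the paper likewise reduces to five cases, singles out Cases~2 and~3 (your sub-cases (a) and (b)) as the only ones where the choice of $f$ genuinely differs from Lemma~\ref{L:Rips-LO-4}, and carries out the same nine-line case distinction on the last letter $t$ of $g_0$ to produce the auxiliary letter $s$. One small slip: in your discussion of the ``$f=g'$'' cases the middle letter of $h_i$ coming from \eqref{rips-relator-3}--\eqref{rips-relator-4} is a $Y$-letter $y_l$, not an $X$-letter $x_l$; and note that the paper does spell out the nine choices of $s$ explicitly (they are not the same letters as in Lemma~\ref{L:Rips-LO-4}, e.g.\ $d_1$ never appears while $d_2$ and $x_i$ do), so your caveat that this step is ``not a mechanical relabelling'' is well placed.
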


\begin{proof} The proof is analogous to the proof of Lemma \ref{L:Rips-LO-3}, and there are 5 cases to consider. As the choice of $f$ is different in Case 2 and 3 we discuss these cases below. 

Let $p_i$ be a piece that is maximal with the property of being a suffix of $v_i$. Let $g\in G$ such that $(g^ {-1},k_i)< |k_i|-|p_i|$, for all $1\leqslant i$. Note that this implies that the reduced form of $gk_i$ ends with $p_i$.  In particular, $gk_i$ is $p$-reduced with all $k_j\not=k_i^{-1}$.

If $g$ is $p$-reduced with all $k_i$, we are done by Corollary \ref{C:Rips-LO-2}. Thus let $g$ be not $p$-reduced with $k_i$. Then  it has to be $p$-reduced for all other $k_j$, $j\not=i$. 

\emph{Case 1:} $k_i=w(b_1,b_2)x^{\varepsilon}$. Then $g=g_0u_0^{-1}(b_1,b_2)$, where $w=u_0u_1$. By assumption $u_1$ is a non-trivial word. Indeed, otherwise $p_i=x_i$. On the other hand $(g^{-1},k_i)<|k_i|-|p_i|=|k_i|-1$, a contradiction. 
In this case  $gk_i=g_0u_1(b_1,b_2)x_i^{\varepsilon}$ and $h_i=\varphi^{-1}(k_i)=w(a_1,a_2)x^{\varepsilon}$. \\
We let $f=g_0'u_0^{-1}(a_1,a_2)$.

\emph{Case 2:} $k_i=w(b_1,b_2)y_i^{\varepsilon}d_2^{\delta}y_i^{-\varepsilon}v(b_1,b_2)$ and $g=g_0y_i^{-\varepsilon}w^{-1}(b_1,b_2)$, so that $gk_i=g_0d_2^{\delta}y_i^{-\varepsilon}v(b_1,b_2)$. Then $h_i=w(a_1,a_2)x_i^{\varepsilon}y^{\delta}x_i^{-\varepsilon}v(a_1,a_2)$, for some $y$. Below we define $f_0$ such that 
\begin{align}
 \tau(g_0y_i^{-\varepsilon})=\tau'(f_0x_i^{-\varepsilon}) \hbox{ and } \tau(g_0d_2^{\delta})=\tau'(f_0y^{\delta}). \label{E:rips-I-case-3a}
\end{align} 
We now explain how to choose $f_0$. We first split $g_0=g_1t^{\gamma}$ and let $f_0=g_0's^{\gamma}=g_1'{t'}^{\gamma}s^{\gamma}$, where $s$ is a letter chosen depending on $t$. Then, by Lemma \ref{L:tau-2}, 
\begin{align*}
 \tau(g_0y_i^{-\varepsilon})=\tau(g_1t^{\gamma}) +\tau(t^{\gamma}y_i^{-\varepsilon}) \hbox{ and } \tau(g_0d_2^{\delta})=\tau(g_1{t}^{\gamma})+\tau(t^{\gamma}{d_2}^{\delta}),
\end{align*}
and 
\begin{align*}
 \tau'(f_0x_i^{-\varepsilon})=\tau'(g_1'{t'}^{\gamma}s^{\gamma}) +\tau'(s^{\gamma}x_i^{-\varepsilon}) \hbox{ and } \tau'(f_0{y}^{\delta})=\tau'(g_1'{t'}^{\gamma}{s}^{\gamma})+\tau'(s^{\gamma}{y}^{\delta}).
\end{align*}
The first terms in these sums equal by Lemma \ref{L:tau}(1).  
Below we choose $s$, hence, $f_0$, such that 
$$
\hbox{$\tau(t^{\gamma}y_i^{-\varepsilon})=\tau'(s^{\gamma}x_i^{-\varepsilon})$ and $\tau(t^{\gamma}{d_2}^{\delta})=\tau'(s^{\gamma}{y}^{\delta})$.}
$$
 This yields \eqref{E:rips-I-case-3a}. We then let $f=f_0x_i^{-\varepsilon}w^{-1}(a_1,a_2)$ so that $fh_i=f_0y^{\delta}x_i^{-\varepsilon}v(a_1,a_2)$. 

 The choice of $s$ depends on $t$ as follows. 
If $t< c$ we let $s={t'}$, and the claim follows as $t$ and $t'$ are smaller than $X$ and $Y$. Otherwise, $t\geq c$. If $t<y_i$ and $t<d_2$ we let $s=c$ and the claim follows as $c<X$ and $c<Y$. \\
If $t<y_i$ and $t=d_2$, then $\leq=\leq_1$, $\leq'=\leq_2$ and $\delta=\gamma$. We let $s=c$. As, $c<_2x_i$ and $\delta=\gamma$, the claim follows by definition of $\tau$. \\
If $t<y_i$ and $t>d_2$, then $\leq=\leq_1$ and $\leq'=\leq_2$. Then we let $s=d_3$. As $d_3<_2x_i$ and $d_3>_2y$, the claim follows. \\
If $t=y_i$ and $t<d_2$, then $\leq=\leq_2$, $\leq'=\leq_1$ and $\gamma=-\varepsilon$. We let $s=d_2$. As $\gamma=-\varepsilon$ and $d_2<_1y$, the claim follows by definition of $\tau$. \\
If $t=y_i$ and $t>d_2$, then $\leq=\leq_1$, $\leq'=\leq_2$ and $\gamma=-\varepsilon$.  We let $s={t'}=x_i$. As $x_i>_2d_2$ this implies the claim. \\
If $t>y_i$ and $t<d_2$, then $\leq=\leq_2$, $\leq'=\leq_1$ and we let $s=d_2$. As $d_2>_1x_i$ and $d_2<_1y$ the claim follows. \\
If $t>y_i$ and $t=d_2$, then $\gamma=\delta$. We let $s=e$. As $e>x_i$ and $\gamma=\delta$, the claim follows by definition of $\tau$.\\
If $t>y_i$ and $t>d_2$, we let $s=e$. As $e>x_i$ and $e>y$, the claim follows. 

\emph{Case 3:} $k_i=w(b_1,b_2)y_i^{\varepsilon}d_2^{\delta}y_i^{-\varepsilon}v(b_1,b_2)$ and $g=g_0d_2^{-\delta}y_i^{-\varepsilon}w^{-1}(b_1,b_2)$, so that $gk_i=g_0y_i^{-\varepsilon}v(b_1,b_2)$. Then $h_i=\varphi^{-1}(k_i)=w(a_1,a_2)x_i^{\varepsilon}y^{\delta}x_i^{-\varepsilon}v(a_1,a_2)$, for some $y$. We fix $f_0$ such that 
\begin{align}
 \tau(g_0y_i^{-\varepsilon})=\tau'(f_0x_i^{-\varepsilon}) \hbox{ and } \tau(g_0d_2^{-\delta})=\tau'(f_0y^{-\delta}). \label{E:rips-I-case-4a}
\end{align} 
We then let $f=f_0y^{-\delta}x_i^{-\varepsilon}w^{-1}(a_1,a_2)$ so that $fh_i=f_0x_i^{-\varepsilon}v(a_1,a_2)$. The choice of $f_0$ is exactly as in Case 2. 

\emph{Case 4:} $k_i=w(b_1,b_2)y_i^{\varepsilon}d_2^{\delta}y_i^{-\varepsilon}v(b_1,b_2)$ and $g=g_0u_0^{-1}(b_1,b_2)y_i^{\varepsilon}d_2^{-\delta}y_i^{-\varepsilon}w^{-1}(b_1,b_2)$, where $v=u_0u_1$ and $u_1$ is a non-trivial word that ends with the piece $p_i$. Then $gk_i=g_0u_1(b_1,b_2)$ and  $h_i=\varphi^{-1}(k_i)=w(a_1,a_2)x_i^{\varepsilon}y^{\delta}x_i^{-\varepsilon}v(a_1,a_2)$. In this case we let $f=g_0'u_0^{-1}(a_1,a_2)x_i^{\varepsilon}y^{-\delta}x_i^{-\varepsilon}w^{-1}(a_1,a_2)$.

\emph{Case 5:} none of the above. In this case we let $f=g'$. 

The remainder of the proof is analogous to the proof of Lemma \ref{L:Rips-LO-3}, with only minor changes, and will thus not be given. 
\end{proof} 

Finally, we have the following. Its proof is the same as the proof of Lemma \ref{L:perfect-LO-4}, where Lemma \ref{L:perfect-LO-3} is replaced by Lemma \ref{L:Rips-LO-4} or Lemma \ref{L:Rips-I-LO-4} respectively, and will thus be skipped. 
\begin{lem}\label{L:Rips-LO-5} Let $g\in F$ such that $(g^{-1},h_i)\geqslant |h_i|-|p_i|$, for some $1\leq i$. 
Then there is $f\in F$ such that 
$ 1\leq_1^g h \hbox{ if and only if } 1\leq_2^f \varphi(h), \hbox{ for all $h\in H$.}$ 
The analogous statement holds for $\varphi^{-1}$. 
\end{lem}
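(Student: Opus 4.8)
The plan is to reduce the statement to Lemma \ref{L:Rips-LO-4} (respectively Lemma \ref{L:Rips-I-LO-4}) by a peeling argument, following the pattern of the proof of Lemma \ref{L:perfect-LO-4}. I would treat the statement for $\varphi$; the one for $\varphi^{-1}$ is entirely symmetric, with the roles of the $h_i$ and the $k_i$ exchanged and Lemma \ref{L:Rips-I-LO-4} used in place of Lemma \ref{L:Rips-LO-4}.

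First I would peel copies of $h^{-1}$ off the right of $g$. Put $g^{(0)}:=g$. Given $g^{(m)}$, if there is an index $j\geqslant 1$ with $((g^{(m)})^{-1},h_j)\geqslant |h_j|-|p_j|$, set $g^{(m+1)}:=g^{(m)}h_j$, so that $g^{(m)}=g^{(m+1)}h_j^{-1}$; otherwise stop and put $g_0:=g^{(m)}$. Because the $C'(1/6)$ condition forces $|p_j|<\tfrac16|h_j|$, the defining inequality means that more than half of $h_j$ cancels in the reduced product $g^{(m)}h_j$, whence $|g^{(m+1)}|<|g^{(m)}|$. Thus only finitely many steps occur (and, should $Q$ be infinitely presented, only finitely many of the inequalities can even hold for a fixed word, since the lengths $|h_j|$ are unbounded). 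The process therefore produces $g_0\in F$ with $(g_0^{-1},h_j)<|h_j|-|p_j|$ for all $j\geqslant 1$, together with a factorisation $g=g_0h_0$ in which $h_0$ is the product of the peeled-off elements, so $h_0\in H$.

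Next I would move the inequality along this factorisation. For $h\in H$ we have $1\leq_1^{g}h$ iff $1\leq_1 g_0(h_0hh_0^{-1})g_0^{-1}$ iff $1\leq_1^{g_0}(h_0hh_0^{-1})$, and $h_0hh_0^{-1}\in H$ since $H$ is a subgroup containing $h_0$. Applying Lemma \ref{L:Rips-LO-4} to $g_0$ gives $f\in F$ with $1\leq_1^{g_0}h'\iff 1\leq_2^{f}\varphi(h')$ for all $h'\in H$; specialising to $h'=h_0hh_0^{-1}$ and using $\varphi(h_0hh_0^{-1})=\varphi(h_0)\varphi(h)\varphi(h_0)^{-1}$ yields $1\leq_1^{g}h\iff 1\leq_2^{f}\varphi(h_0)\varphi(h)\varphi(h_0)^{-1}\iff 1\leq_2^{f\varphi(h_0)}\varphi(h)$. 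So $f':=f\varphi(h_0)$ is the required element, and the same scheme with the $k_i$ and Lemma \ref{L:Rips-I-LO-4} handles $\varphi^{-1}$.

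I do not expect a genuine obstacle; the argument is a routine reduction. The only points calling for care are the verification that each peeling step strictly shortens the word -- this is exactly where the small cancellation hypothesis enters, and it also forces termination -- and the bookkeeping of the conjugating elements, so that the conclusion is stated with an order conjugated by a single element $f'\in F$ rather than by $g_0$ together with the residual factor $\varphi(h_0)$. Everything else is a direct transcription of the proof of Lemma \ref{L:perfect-LO-4}.
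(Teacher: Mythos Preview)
Your proposal is correct and follows essentially the same approach as the paper, which explicitly says the proof is the same as that of Lemma~\ref{L:perfect-LO-4} with Lemma~\ref{L:Rips-LO-4} (respectively Lemma~\ref{L:Rips-I-LO-4}) in place of Lemma~\ref{L:perfect-LO-3}. You are in fact slightly more explicit than the paper about why the peeling terminates and about absorbing the factor $\varphi(h_0)$ into the conjugator, but the argument is the same.
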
 

\begin{proof}[Proof of Theorem \ref{IT:Rips}] In view of Proposition \ref{P:Rips} it remains to prove that $G$ is left-orderable. To this end we note that $\varphi$ is compatible for $(\mathcal L,\mathcal L)$ by Lemmas \ref{L:Rips-LO-4} and \ref{L:Rips-LO-5}, and  $\varphi^{-1}$ is compatible for $(\mathcal L,\mathcal L)$ by Lemmas \ref{L:Rips-I-LO-4} and \ref{L:Rips-LO-5} respectively. Thus $\varphi$ is compatible and the assumptions of Theorem \ref{T:combination-HNN} are satisfied, hence, the claim holds. \end{proof} 
 
 \subsection{Embedding groups that are not locally indicable} Finally, we argue that the groups $G$ and $N$ in the Rips construction can be made not locally indicable (Proposition \ref{IP:not-locally-indicable}). To this end, let $P=\langle e_1,e_2,e_3,e_4\rangle$ be a left-orderable and perfect group given by Proposition \ref{P:perfect-lo}. We now alter the construction of the group $G$ in order to embed $P$ in $N$. The groups $G$ and $N$ so obtained are then not locally indicable, as $P$ is a finitely generated subgroup that is not indicable.
 
  As before let $F$ be the free group freely generated by 
  $$
  \langle a_1,a_2,b_1,b_2,x_1,\ldots x_n,d_1,d_2,d_3,y_1,\ldots,y_n,e\rangle,
  $$
  and let 
  $$\overline{F}=F*P=\langle a_1,a_2,b_1,b_2,c,x_1,\ldots x_n,d_1,d_2,d_3,y_1,\ldots,y_n,e,e_1,e_2,e_3,e_4\rangle.$$ 
 As before, we let $z_1=a_1$, $z_2=a_2$, $z_3=b_1$, $z_4=b_2$, $z_5=c$, $z_6=d_1$, $z_7=d_2$, $z_8=d_3$, $z_9=e$, and let $z_{10}=e_1$, $z_{11}=e_2$, $z_{12}=e_3$ and $z_{13}=e_4$. Then we define the elements $h_{2i}$ and $k_{2i}$ by \eqref{rips-relator-1}-\eqref{rips-relator-7}. That is, in comparison with the previous section, we add words $h_{j_{i,l}}$ and $k_{j_{i,l}}$ of type \eqref{rips-relator-5} and \eqref{rips-relator-6} for $z_l=z_l'=e_1$, $e_2$, $e_3$ or $e_4$ respectively.  Then we let $H$ be the subgroup freely generated by $h_2,h_4,\ldots, h_{2_i}, \ldots$ and $K$ the subgroup freely generated by $k_2,k_4,\ldots, k_{2_i}, \ldots$ respectively. As before we denote by $\varphi$ the natural isomorphism from $H$ to $K$ that sends $h_{2i}$ to $k_{2i}$, and let 
 $$
 G=\overline{F}*_H=\langle \overline{F},q \mid h=q\varphi(h)q^{-1}, \hbox{ for all $h\in H$} \rangle.
 $$
  Let $N$ be the subgroup of $G$ generated by $q$, $z_1$, $\ldots$, $z_{13}$, $y_1$, $\ldots$, $y_n$. 
 As in the proof of Proposition \ref{P:Rips}, $N$ is a normal subgroup of $G$ and equal to the kernel of the natural projection from $G$ onto $Q$. We also note that $P$ embeds into $G$, hence, it also embeds into $N$. Moreover, we may assume that $ h_2,h_4,\ldots, h_{2_i}, \ldots$, $k_2,k_4,\ldots, k_{2_i}, \ldots$ and the relators of $P$  are small cancellation words, hence, that $G$ is a small cancellation group itself. It remains to argue that $G$ is left-orderable. 
 
To this end we use explicit left-orders on the free product $\overline{F}$, as given by \cite{warren_orders_2020}, see Section~\ref{S:free-product}. Let $\leq_1$ and $\leq_2$ be the left-orders on $F$ already defined in the previous section. Let $\leq_P$ be a left-order on $P$. This yields two weight functions $\overline{\tau}_1$ and $\overline{\tau}_2$, and respective left-orders $\overline{\leq}_1$ and $\overline{\leq}_2$ on $\overline{F}$, see Section \ref{S:free-product} and Theorem \ref{T:left-order-free-product}. As before,$\overline{\leq}$ denotes one of these two orders, and $\overline{\leq}'$ the other one. We let $\overline{\mathcal L}$ be the normal closure of $\{\overline{\leq}_1,\overline{\leq}_2\}$ in $\overline{F}$. 
 
To prove that $\varphi$ is compatible for $(\overline{\mathcal{L}},\overline{\mathcal{L}})$, we first explain why Lemma \ref{L:Rips-LO-4} holds. If $g\in \overline F$ we denote by $g'$ the element obtained by replacing all syllables $s$ in $F$  by $s'=\phi(s)$. Let $g\in \overline{F}$ such that $(g^{-1},h_i)<|h_i|-|p_i|$, for all $i\geqslant 1$. Let $h=h_{i1}\cdots h_{i_l}$. First let $g\in \overline F$ be of syllable length $>1$. If $g$ ends with a syllable in $P$, then $ 1\; \overline\leq\; ghg^{-1}$ if, and only if, $1\;\overline\leq{}' \; g'\varphi(h)g'^{-1}$ by definition of the weight functions and orders on $\overline{F}$. 
 Otherwise, we split $g=g_1g_2$, where $g_1$ ends with a syllable in $P$, and $g_2\in F$. If there is $f_2\in F$ such that  $1\; \overline\leq^{g_2} h $ implies that  $1\;\overline{\leq}{}'^{f_2}  k$, then we let $f_1=g_1'$ and $f=f_1f_2$. Now $ 1 \; \overline\leq^{g_1}  g_2hg_2^{-1}$ if, and only if, $1\; \overline\leq{}'^{f_1}  f_2\varphi(h)f_2^{-1}$. Equivalently, $ 1\; \overline\leq \; ghg^{-1}$ if, and only if, $1\; \overline\leq{}'\; g'\varphi(h)g'^{-1}$. 

 This reduces the proof to the case that $g\in F$. 
 In this case, the assertion of Lemma \ref{L:Rips-LO-2} can be replaced by $\overline{\tau}(g_1hg_3^{-1})=\overline{\tau}'(g_2\varphi(h)g_4^{-1})$.  Indeed,  under the assumptions of lemma, the $r$-th syllable in $g_1hg_3^{-1}$ is positive if and only if the $r$-the syllable in $g_2\varphi(h)g_4^{-1}$ is positive, for every $r>0$. The proof of this claim is similar to the proof of Lemma \ref{L:Rips-LO-2}. We note that Lemma \ref{L:Rips-LO-3} remains valid; and its proof remains unchanged. This then yields Corollary \ref{C:Rips-LO-2} for $\overline{\leq}$ and $\overline{\leq}{}'$.  Similarly, Lemma \ref{L:Rips-LO-4} for $\overline{\leq}$ and $\overline{\leq}{}'$ now follows as before, by choosing $f$ and verifying the assumptions of Lemma \ref{L:Rips-LO-2} in the 5 cases of the proof. 

We now conclude Lemma \ref{L:Rips-LO-5} as before, and thus that $\varphi$ is compatible for $(\overline{\mathcal{L}},\overline{\mathcal{L}})$. Similarly, one proves that $\varphi^{-1}$ is compatible with $(\overline{\mathcal{L}},\overline{\mathcal{L}})$. Theorem \ref{T:combination-HNN} now implies that $G$ is left-orderable. This finishes the proof of Proposition \ref{IP:not-locally-indicable}.

\bibliographystyle{alpha}
\bibliography{orders-rips}

\end{document}